\renewcommand\paragraph{\@startsection{paragraph}{4}{\z@}%
                                    {3.25ex \@plus1ex \@minus.2ex}%
                                    {-0.5em}%
                                    {\normalfont\normalsize\it}}
\numberwithin{equation}{section}
\theoremstyle{plain}                
\newtheorem{theorem}{Theorem}[section]
\newtheorem{lemma}[theorem]{Lemma}
\newtheorem{proposition}[theorem]{Proposition}
\theoremstyle{definition}           
\newtheorem{definition}[theorem]{Definition}
\newtheorem{example}[theorem]{Example}
\theoremstyle{remark}
\newtheorem{remark}[theorem]{Remark}
\DeclareMathOperator\Lip{Lip}
\DeclareMathOperator\Id{Id}
\DeclareMathOperator\image{Im}
\DeclareMathOperator\Cl{Cl}
\DeclareMathOperator\ulim{\underline{\lim}}
\newcommand{\tot}{\tfrac{1}{2}} 
\newcommand{\oo}[1]{\tfrac{1}{#1}}
\newcommand{\sabs}[1]{| #1 |} 
\newcommand{\abs}[1]{\left| #1 \right|} 
\newcommand{\set}[1]{\{#1\}} 
\newcommand{\sets}[2]{\set{#1\,:\,#2}} 
\newcommand{\Bset}[1]{\Big\{#1\Big\}} 
\newcommand{\Bsets}[2]{\Bset{#1\,:\,#2}} 
\newcommand{\bfone}{{\mathbf 1}}
\newcommand{\ind}[1]{ \bfone_{{#1}}} 
\newcommand{\inds}[1]{ \bfone_{\set{#1}}} 
\newcommand{\seq}[1]{\set{#1_n}_{n\in\N}} 
\newcommand{\seqm}[1]{\set{#1_m}_{m\in\N}} 
\newcommand{\fml}[2]{\set{#1}_{#2}}
\newcommand{\sqm}[1]{\{ #1 \}_{m\in\N}}  
\newcommand{\norm}[1]{{||#1||}} 
\newcommand{\prfi}[1]{ \{ #1 \}_{t\in [0,\infty)}}
\newcommand{\tto}{\twoheadrightarrow}
\newcommand{\downto}{\searrow}
\newcommand{\Implies}{\Rightarrow}
\providecommand{\R}{} \renewcommand{\R}{{\mathbb R}}
\providecommand{\N}{} \renewcommand{\N}{{\mathbb N}}
\newcommand{\PP}{{\mathbb P}}
\newcommand{\EE}{{\mathbb E}}
\newcommand{\FF}{{\mathcal F}}
\newcommand{\tsum}{\textstyle\sum}
\newcommand{\FFF}{{\mathbb F}}
\newcommand{\eps}{\varepsilon}
\newcommand{\ld}{\lambda}
\newcommand{\vp}{\varphi}
\newcommand{\esl}{{\mathcal L}} 
\newcommand{\slone}{\esl^1}
\newcommand{\el}{{\mathbb L}} 
\newcommand{\lzer}{\el^0}
\newcommand{\lone}{\el^1}
\newcommand{\lpee}{\el^p}
\newcommand{\define}[1]{{\textbf{#1}}}
\newcounter{notecounter}
\newcommand{\efor}{\text{ for }}
\newcommand{\eforall}{\text{ for all }}
\newcommand{\eand}{\text{ and }}
\newcommand{\ewhere}{\text{ where }}
\newcommand{\ewith}{\text{ with }}
\newcommand{\cd}{c\` adl\` ag } 
\newcommand{\cg}{c\` agl\` ad } 
\newcommand\tf{{\tilde{f}}}
\newcommand\sC{{\mathcal C}}
\newcommand\sD{{\mathcal D}}
\newcommand\sF{{\mathcal F}}
\newcommand\sG{{\mathcal G}}
\newcommand\sL{{\mathcal L}}
\newcommand\sM{{\mathcal M}}
\newcommand\sN{{\mathcal N}}
\newcommand\sO{{\mathcal O}}
\newcommand\sP{{\mathcal P}}
\newcommand\bsP{{\boldsymbol{P}}}
\newcommand\sS{{\mathcal S}}
\newcommand\tT{{\tilde{T}}}
\newcommand\sV{{\mathcal V}}
\newcommand\sX{{\mathcal X}}
\newcommand{\Coord}{\mathsf{Coord}}
\newcommand{\QCoord}{\mathsf{QCoord}}
\newcommand{\Time}{\mathsf{Time}}
\newcommand{\Qtime}{\mathsf{QTime}}
\newcommand{\Stop}{\mathsf{Stop}}
\newcommand{\Qstop}{\mathsf{QStop}}
\newcommand{\dr}{D_{\R}}
\newcommand{\prob}{\mathsf{Prob}}
\newcommand{\kernel}{\mathsf{Kern}}
\newcommand{\Borel}{\mathsf{Borel}}
\newcommand{\univ}{\mathsf{Univ}}
\newcommand{\prst}[1]{\set{#1}_{t\in \Time}}
\newcommand{\mha}{\sM^{\#}}
\newcommand{\mfa}{\sM^{f}}
\newcommand{\mpa}{\sM^{p}}
\newcommand{\msa}{\sM^{*}}
\newcommand{\Trunc}[1]{T_{#1}}
\newcommand{\Tt}{\Trunc{t}} \newcommand{\Ts}{\Trunc{s}}
\newcommand{\Ttau}{\Trunc{\tau}} \newcommand{\Tkappa}{\Trunc{\kappa}}
\newcommand{\filt}{(\Omega,\FF,\FFF=\prst{\sF_t})}
\newcommand{\filta}{(\Omega^{\alpha},\FF^{\alpha},\FFF^{\alpha}=\prst{\sF^{\alpha}_t})}
\newcommand{\filtx}{(\Omega^{\xi},\FF^{\xi},\FFF^{\xi}=\prst{\sF^{\xi}_t})}
\newcommand{\filtax}{(\Omega^{\alpha\xi},\FF^{\alpha\xi},\FFF^{\alpha\xi}=\prst{\sF^{\alpha\xi}_t})}
\newcommand{\tOmega}{\tilde{\Omega}}
\newcommand{\hOmega}{\hat{\Omega}}
\newcommand{\tfilt}{(\tOmega,\tilde{\FF},\tilde{\FFF}=\prst{\tilde{\sF}_t})}
\newcommand{\tomega}{\tilde{\omega}}
\newcommand{\ak}{\ast_{\kappa}}
\newcommand{\at}{\ast_{\tau}}
\newcommand{\oto}{\omega\ast_t \omega'}
\newcommand{\slzm}{\sL^{0-1}}
\newcommand{\slmz}{\sL^{1-0}}
\renewcommand{\bsP}{\bar{\sP}}
\newcommand{\mlt}{\mu_{\leq \tau}}
\newcommand{\nlt}{\nu^{\leq \tau}}
\newcommand{\nelt}{(\nu^{\eps})^{\leq \tau}}
\newcommand{\ogk}{\omega_{\geq\kappa}}
\newcommand{\oemga}{\omega}
\newcommand{\cep}{C_{E^{\partial \sO}}}
\newcommand{\Oax}{\Omega^{\alpha\xi}}
\newcommand{\sGa}{\sG^{a}}
\newcommand{\Gup}{G^{\uparrow}}
\newcommand{\Cr}{C_{\R}}
\newcommand{\Grup}{\Gup_{\R}}
\newcommand{\osP}{\overline{\sP}}
\def\mathclap#1{\text{\hbox to 0pt{\hss$\mathsurround=1pt#1$\hss}}}
\title%
[A Framework for the DPP]
{A Framework for the Dynamic Programming Principle and Martingale-generated Control Correspondences}
\author{Roman Fayvisovich}
\address{Roman Fayvisovich, Department of Mathematics\\
The University of Texas at Austin}
\email{rfayvisovich@math.utexas.edu}
\thanks{
  \emph{Acknowledgments:}
  The authors would like to thank Mihai S\^\i rbu and Kasper Larsen for
  valuable conversations and acknowledge the support by the
  National Science Foundation under Grants
  DMS-0706947 (2007 - 2010),
  DMS-09556194 (2010 - 2015),
  DMS-1107465 (2012 - 2017) and
  DMS-1516165 (2015 - 2018).
  Any opinions, findings and conclusions or recommendations
  expressed in this material are those of the author(s) and do not
  necessarily reflect the views of the National Science Foundation (NSF)}
\author{Gordan \v Zitkovi\' c}
\address{Gordan \v Zitkovi\' c, Department of Mathematics\\
The University of Texas at Austin}
\email{gordanz@math.utexas.edu}
\begin{document}

\subjclass[2010]{93E20, 60G44, 60J25}

\date{\today}

\begin{abstract}
We construct an abstract framework in which the dynamic programming
principle (DPP) can be readily proven. It encompasses a broad range of
common stochastic control problems in the weak formulation, and deals with
problems in the ``martingale formulation'' with particular ease. We give two
illustrations; first, we establish the DPP for general controlled
diffusions and show that their value functions are viscosity solutions of
the associated Hamilton-Jacobi-Bellman equations under minimal conditions.
After that, we show how to treat singular control on the example of the
classical monotone-follower problem.
\end{abstract}

\maketitle

\section{Introduction} The goal of this paper is creating a probabilistic
framework in which the dynamic programming principle (DPP) can be easily
proved. To be useful, such a framework needs to be sufficiently powerful,
so as to encompass as many stochastic control problems as possible, but
also sufficiently simple, so that it is easily applied in a given
situation.  On a deeper level, our intention is to identify the fundamental
properties stochastic control problems and their setups need to have in
order for the DPP to hold. One of the many interesting things about
(proving) the DPP is that its validity depends both on topological/measure
theoretic properties of the underlying spaces (such as the Polish
structure) and structural properties of the control problem (such as the
ability to concatenate controls).  A large part of this paper is a study
of their interplay in the setting of filtered probability
spaces and general formulations of stochastic-control problems.

Even though the dynamic programming principle has been introduced in the mid
20th century, or even earlier,  (we point the reader to \cite{Zit14} for a
short historical overview), research related to DPP - especially in
continuous time - underwent somewhat of a renaissance in the past several decades
(see., e.g., \cite{Elk81}, \cite{Bor89}, \cite{FleSon93}, \cite{SonTou02},
\cite{SonTou02a}, \cite{BouVu10}, \cite{BouTou11} \cite{BouNut12},
\cite{ElKTan13}, \cite{ElKTan13a} and \cite{Zit14}).

\subsection{Our contributions} Our starting point is the paper \cite{Zit14}
which focuses on two specific control problems and shows that they both
satisfy the DPP.  Therein, the so-called controlled Markov families (families
of \emph{sets} of probability measures indexed by the elements of a state
space) are introduced, and DPP is formulated as a natural analogue of the
Markov property in that setting. That formulation helps identify three
separate properties (already present in the literature, see, e.g.,
\cite{ElKTan13a,NutHan13,Zit14}) of a controlled Markov family, called
\emph{analyticity,
concatenability} and \emph{disintegrability,} under which the DPP holds.  On their
own, these three properties do not amount to much more than a rephrasing of the
DPP without making it much easier to establish. The present paper takes up
the task of providing wide sufficient conditions for each of these three
and, thus, for the validity of the DPP.

\subsubsection{Truncation- and truncation-concatenation spaces} We begin by
introducing the structure of a \emph{truncated space} (T-space) which
carries the structure of a ``measurably-filtered space'' with each $\sF_t$
generated by a single, albeit, Polish-valued, random variable.  Perhaps
unexpectedly at first, virtually all (uncompleted) concrete filtrations
used in probability and stochastic control turn out to be T-spaces;
moreover, we show that some perks of canonical spaces $C$ and $D$ (such as
Galmarino's test) extend to all T-spaces.  Another added benefit is that
sigma-algebras $\sF_{\tau}$ corresponding to stopping times inherit the
property of being generated by a single, Polish-valued random variable.
This observation simplifies many of our proofs and provides further insight into the
structure of T-spaces.  Moreover, many natural constructions (such as products
or subspaces) work well in the T-space context.  This is particular
important for our purposes as control problems come in a variety of forms,
but are invariably built out of a smaller number of ``probabilistic building blocks''.
In the same, categorical, worldview, a natural and useful notion of a
morphism between T-spaces can be introduced.

If one adds a time-indexed family of binary operations to a T-space and
imposes appropriate measurability and compatibility requirements, one
obtains the structure of a \emph{truncation-concatenation space}
(TC-space). The idea is to abstract away the main properties that define
the operation of concatenation in the context of the DPP. In addition to
the model case of pasting of (right-) continuous paths, many other forms of
concatenation are covered by TC-spaces. Indeed, while the state spaces of
control problems typically involve the spaces of (right-, left-, \dots)
continuous trajectories, the spaces of controls are much less regular and
need a more flexible framework. Just like in the case of T-spaces, one
defines products, subspaces and structure-preserving maps (morphisms)
between TC-spaces. Morphisms into the model space $D_{\R}$ of \cd{}
trajectories play an especially important role later when we deal with
martingale-generated controlled Markov families.

Once TC-spaces are set up, control problems are represented by
\emph{control correspondences}, i.e., correspondences that map each element
of the sample space into a set of probability measures on it.  In this
context, one defines the notion of a value function of a control problem,
as well as the properties of analyticity, concatenability and
disintegrability which, together, imply (an abstract) DPP. It is, perhaps,
interesting to note that no notion of a state is needed for the abstract
DPP to hold. It can be introduced explicitly, as we often do, but its role
is abstractly taken over by the notion of compatibility used to define a TC
space.

\subsubsection{Martingale-generated control correspondences.} Our
central claim is that truncation-con\-ca\-te\-na\-ti\-on spaces, together with
a shift operator (which can be thought of as a partial inverse of
concatenation and plays a central role in the study of disintegrability),
provide a convenient framework on which a variety of stochastic control
problems can be posed and analyzed. Of course, the validity of the DPP will
depend on the nature of the problem itself, but, as we show in examples,
this amounts to a verification of a small number of easily checked
intuitive conditions. Focusing mainly on control problems in their weak
formulation, and the derived control correspondences, we identify two
important cases in which these conditions are especially easy to check. One
is when the probability of the future evolution is controlled
directly, without the need for an intermediate ``control process'', as is the
case, e.g.,  with pure singular-control problems.  In the other, much
larger, family of cases, explicit control processes are typically present,
but their structure is such that access to the totality of all
possible controlled dynamics is possible via a system of ``well-behaved''
constraints. Such constraints are often expressible in terms of the (local)
martingale property of a class of real-valued \cd{}processes. The control
correspondences constructed in this way are said to be
\emph{martingale-generated} as
they correspond, loosely, to what is known as the martingale formulation of
optimal control in the literature. The second third of the paper focuses on
martingale-generated control correspondences on TC spaces and provides
sufficient conditions on the structure of the constrains (by interpreting
them as morphisms into the model space $D_{\R}$) for the DPP to
hold.

\subsubsection{Examples} The final third of the paper contains two examples
meant to illustrate the versatility of our framework. The first one is the
classical controlled-diffusion case which we consider in the weak
formulation and place it in our setting as a martingale-generated control
correspondence. We show that sufficient conditions established in the
previous section apply in this case, and conclude that the DPP holds under
minimal conditions on the coefficients and the form of the controls.  We also
demonstrate that value functions of such control problems are viscosity solutions
of the corresponding Hamilton-Jacobi-Bellman equations, under slightly
stronger conditions (continuity of coefficients and admissibility of
locally constant controls). This partially
generalizes several recent
results in the literature, such as the ``stochastic Perron'' method of
Bayraktar and S\^{i}rbu (introduced in \cite{BaySir12}) or the work of
Bouchard and Touzi on the ``weak DPP'' (see \cite{BouTou11}). 
The same class of problems - under a somewhat different set of assumptions - has
already
been treated by the authors of \cite{ElKTan13,ElKTan13a}. Like the present
paper,
they rely on the ability to pose an equivalent controlled martingale problem
on a suitable canonical space and characterize the resulting control
correspondence using at most countably many test functions.

Our second example is of singular type, and features a mildly
generalized Monotone-Follower problem. Here, we not only show how to
establish the DPP for a singular-control problem in our framework, but also
showcase its flexibility. Indeed, we split the
variables into two groups and deal with one directly, and with the other
using the martingale-generated approach. These two are considered separate
control problems (with separate control correspondences) until the very
last moment when they are easily merged.

\subsection{Notation and conventions.} Both probabilistic and analytic
tools -  which often come with less-than-perfectly compatible notations and
terminology - are used in this paper. For the convenience of the reader, we
outline some of our major choices and conventions below.

Both probabilistic $\EE^{\PP}[X]$ and analytic $\int G\, d\mu$ notation for
integration will be used. The former will appear mostly in examples, and
the latter in the abstract part.

Many of our probability spaces come with \define{Polish} (completely
metrizable, separable) sample spaces and Borel probability measures. When
the Polish structure is present, measurability will always refer to the
associated Borel $\sigma$-algebra, denoted by $\Borel(\Omega)$.
The
set of all probability measures on $\Borel(\Omega)$ is denoted by
$\prob(\Omega)$.

A subset $A$ of a Polish space $\Omega$ is called \define{analytic} if it
can be realized as a projection of a Borel subset of $\Omega\times \R$ onto
$\Omega$. We remind the reader that analytic subsets of Polish spaces are
closed under countable unions, intersections and products, but not
necessarily under complements. It will be important for us that each
analytic set is in the \define{universal} $\sigma$-algebra - denoted by
$\univ(\Omega)$ - i.e., the family of all sets which belong to the
completion $(\Borel(\Omega))^*_{\mu}$ for each $\mu\in\prob(\Omega)$.
 We
refer the reader to \cite{Sri98} for all the necessary details concerning
descriptive set theory (see also \cite{BerShr78} for a thorough treatment
of related topics in the context of the dynamic programming principle).

We topologize $\prob(\Omega)$ with the topology of
(probabilist's) weak convergence. This way, $\prob(\Omega)$ becomes a
Polish space.  The following well-known fact, proved in a standard way via
the monotone-class theorem, will be used throughout without  mention: Let
$U$ and $V$ be Polish spaces and let $f:U\times V \to [0,\infty]$ be a
Borel-measurable function. The map
\[ U \times \prob(V) \ni (x,\mu) \mapsto
\EE^{\mu}[f(x,\cdot)]=\int_{V} f(x,y)\, \mu(dy)\]
is Borel measurable.

A probability measure defined on $\Borel(\Omega)$ admits a natural
extension to $\univ(\Omega)$. Similarly, our kernels \emph{will always be
universally measurable}. More precisely, for Polish spaces
$\Omega,\tilde{\Omega}$, a map $\nu:\Omega\times \Borel(\tilde{\Omega}) \to
[0,1]$ is called a \define{kernel} if $\nu(\omega,\cdot) \in
\prob(\tilde{\Omega})$ for each $\omega\in \Omega$ and $\nu(\cdot,B)$ is a
universally-measurable map on $\Omega$, for each
$B\in\Borel(\tilde{\Omega})$.  Depending on the situation we use both
notations $\nu(\omega,\cdot)$ and $\nu_{\omega}$ for the probability
measure associated by $\nu$ to $\omega$.

A \define{standard Borel space} is, by definition, a measurable space which
admits a measurable bijection to a Borel subset of some $\R^n$, whose
inverse is also measurable (a \define{bimeasurable isomorphism}). All
standard Borel spaces of the same cardinality are bimeasurably isomorphic,
and so, each standard Borel space can be given a complete and separable
(Polish) metric so that the induced measurable structure matches the
original one. With this in mind, we talk of standard Borel spaces when only
the measurable structure is relevant, and about Polish spaces when
topological properties are required.

\section{An abstract setting for the Dynamic Programming Principle (DPP)}
\label{sec:abs}
Let the \define{time set} $\Time$ be either $[0,\infty)$ or $\N_0$.  An
overwhelming majority of applications will only use these two time sets, so
we do not aim for greater generality. We do note that the results of this
section will hold for more general time structures (such as intersections
with $[0,\infty)$ of Borel-measurable additive subgroups of $\R$).

\subsection{T-spaces (truncated spaces)} We start with the definition of
T-spaces - a class of filtered probability spaces our analysis will be
based on.
\begin{definition}[T-spaces]
\label{def:trunc}
A filtered measurable space $\filt$ is called a \define{T-space}
(or a \define{truncated space}) if
\begin{enumerate}
  \item $(\Omega,\sF)$ is a standard Borel space and
  $\FF =\bigvee_{t\in\Time} \sF_t$.
  \item there exists a family
  $\prst{\Tt}$ of maps $T_t:\Omega\to\Omega$ -  called a \define{truncation}
  - such that
\begin{enumerate}
 \item \label{ite:meas-trunc}
  $(t,\omega) \mapsto \Tt(\omega)$ is (jointly) measurable,
 \item
  \label{ite:proj-trunc}
  $\Tt \circ \Ts = \Trunc{s\wedge t} \eforall s,t\in\Time$, and
\item $\sF_t = \sigma(T_t)$ for each $t\in \Time$.
 \end{enumerate}
 \end{enumerate}
\end{definition}
For notational reasons, we always add the identity map $T_{\infty}=\Id$
to any truncation. Moreover, we often use the
alternative notation $\omega_{\leq t}$ for $T_t(\omega)$.

\subsection{First examples of T-spaces}\
\label{sss:exam-T}
All T-spaces are necessarily countably generated, so not every filtered
probability space can be endowed with the structure of a T-space.
Nevertheless, as our examples below aim to show, many spaces used in
stochastic analysis and optimal stochastic control are natural T-spaces.
When it is necessary to make a distinction, we take $\Time=[0,\infty)$ and
leave it to the reader to make the necessary minor adjustments needed for
the case $\Time=\N_0$. Once we describe various natural constructions
involving T-spaces in subsection \ref{sss:categ} below, the reader will be
able to produce many more examples.

\subsubsection{The path space $D_E$.} Let $E$ be a Polish space, and let
$D_E$ denote the family of all \cd functions from $\Time$ to $E$.  For
$t\in\Time$, we define the truncation map $T_t:D_E \to D_E$ by
 \begin{align}
 \label{equ:standard-trunc}
   T_t(\omega) (s) =\omega(t\wedge s) \efor s\in\Time,
 \end{align}
so that \eqref{ite:proj-trunc} of Definition \ref{def:trunc} holds.  It is
well-known that $D_E$ is a Polish space under the Skorokhod topology.  The
map $T_t$ is  Skorokhod-continuous, and therefore, measurable.  Hence, as a
Caratheodory function, $T:\Time \times \Omega \to \Omega$ is (jointly)
measurable. The filtration $\sF_t=\sigma(T_t), t\in\Time$ clearly coincides
with the (raw) filtration generated by the coordinate maps $\omega \mapsto
\omega(t)$.

\subsubsection{Path spaces $G_E$, $C_E$ and $\Lip^{L,x_0}_{\R}$.}
\label{par:G}
Analogous constructions can be performed on the space $G_E$ of
left-con\-ti\-nu\-ous and right limited paths from $\Time$ to $E$, or on
the space $C_E$ of continuous paths.  Both of these are given the Skorokhod
topology (and the induced Borel structure), which, in the case of $C_E$
reverts to the usual topology of locally uniform convergence. Unless
specified otherwise, these spaces (and their subspaces) will always be
endowed with the standard truncation given by \eqref{equ:standard-trunc}.

We will also have use for
the  space $\Lip^{L,x_0}_{\R}$ consisting of all functions $x:[0,\infty)
\to \R$ such that $x(0)=x_0$ and $\abs{x(t) - x(s)}\leq L\abs{t-s}$ for all
$s,t\in [0,\infty)$. It is easy to see that $\Lip^{L,x_0}_{\R}$ is also a
T-space with the standard truncation.

\subsubsection{The space $\lzer_A$ and related spaces.} Let $A$ be a standard Borel space, let
$\ld$ be the Lebesgue measure (or any other Radon
measure) on $[0,\infty)$, and let $\hat{\ld}$ denote an equivalent
probability measure on $[0,\infty)$ (e.g., $\hat{\ld}(dt)=e^{-t}\,
\ld(dt)$, when $\ld$ is the Lebesgue measure).
We define $\lzer_A$ as the set of all
$\ld$-a.e.-equivalence classes of Borel functions $\alpha:[0,\infty)\to A$.
Given a bimeasurable
isomorphism $\phi: A \to [-1,1]$ (which exists thanks to
the standard Borel property of $A$) we metrize $\lzer_A$ by
\[ d(\alpha,\beta) = \norm{\phi(\alpha) -
\phi(\beta)}_{\lone(\hat{\ld})}.\]
This way, $\Omega=\lzer_A$ becomes a Polish
space and a natural truncation on it is defined by
  \[ T_t(\alpha) = \begin{cases}\alpha_u, & u<t\\ \phi^{-1}(0), & u\geq t.
  \end{cases}\]
We note that the equivalence class of the right-hand side depends on
$\alpha$ only through its equivalence class, and that, while $d$ and the
induced Polish topology depend on
the choice of $\phi$ and $\hat{\ld}$, the resulting standard
Borel structure does not. The choice of this particular $\phi$ makes it easy
to show that $T_t$ is jointly measurable; indeed, it will be
continuous under $d$ in both of its arguments.

Once the space $\lzer_A$ is constructed, one can easily show that many of
subsets (such as the $\lpee$ spaces when $A=\R$) are also T-spaces.

\subsubsection{Spaces of measures.}
For a metrized Polish space $U$, let $\mha(U)$
be the family of all {boundedly-finite Borel measures}
on $U$, i.e,. those measures
$\mu$ such that $\mu(B)<\infty$, as soon as $B$ is a bounded Borel set.
There exists a metric on $\mha(U)$, whose topology coincides with the
topology of weak convergence when restricted on measures supported by a
fixed bounded set (see \cite[Section A2.6, p.~402]{DalVer03} for the proof
of this and other statements about the space $\mha(U)$ we make below).
Under the full topology induced by this metric, called the
$w^{\#}$-topology, $\mha(U)$ becomes a Polish space.
Moreover, a sequence $\seq{\mu}$ in $\mha(U)$
converges if and only if $\int f\, d\mu_n \to \int f \mu$ for each bounded
and continuous function $f:\Omega\to\R$ which vanishes outside a bounded set.
The Borel $\sigma$-algebra on $\mha(U)$ is generated by the evaluation maps
$\mu \mapsto \mu(A)$, where $A$ ranges over a family of all bounded Borel
subsets of $U$. The subsets $\mfa(U)$ and
$\mpa(U)=\prob(U)$ of
$\mha(U)$, consisting only of finite or probability measures
(respectively),  are easily seen to be Borel subsets of $\mha(\Omega)$,
and, therefore, standard Borel spaces
themselves.

For a Polish space $E$, we set $\Omega=\msa(U)$, where $U=[0,\infty) \times
E$ and $* \in \set{\#, f, p}$. The truncation maps are given by
\[ \mu_{\leq t} (A) = \mu\Big(  ([0,t]\times E) \cap A\Big), \efor t\in
[0,\infty), A\in [0,\infty) \times E.\]
With the filtration generated by the maps $T_t$, it is clear that $\vee_t
\sF_t$ is the Borel $\sigma$-algebra on $\Omega$. The only remaining property from Definition \ref{def:trunc}
is \eqref{ite:meas-trunc}, for which it is
sufficient to note that for any boundedly supported function $f$ we have
$\int f\, d\mu_{\leq t} = \int f \ind{[0,t]\times E}\, d\mu$. Indeed, it
follows that $(t,\mu)\mapsto \mu_{\leq t}$ is a Caratheodory function as
it is right continuous in $t$ and measurable in $\mu$.

\subsubsection{Predictable truncations.} In many the examples above, it is possible
to define several different truncations on the same underlying Polish
space. For example, in the case of the canonical space $D_E$, we may set
\[ T'_t(\omega)(s) = \begin{cases} \omega_s,& s<t \\ \omega_{t-}, & s\geq t
\end{cases}.\]
It is easily checked that $T'_t$ is indeed, a truncation on $D_E$; we call
it the \define{predictable truncation}.

\subsection{Truncating at stopping times}\
Given a T-space $\filt$,  let the set of all stopping times be denoted by $\Stop$. The index set for the family of truncation operators
can be extended to $\Stop$ by setting
\begin{align*}
T_{\tau} (\omega) = T_{\tau(\omega)}(\omega) \efor \tau\in\Stop
\eand
  \omega\in\Omega,
\end{align*}
where the convention that $T_{\infty}$
is the identity map is used.
As is the case with deterministic times, the notation $T_{\tau}(\omega)$ will often be replaced
by the less cumbersome (and more suggestive) $\omega_{\leq \tau}$.
\begin{proposition}
\label{pro:prop-trunc}
For all $t\in\Time$, $\omega\in\Omega$, $\tau,\kappa \in\Stop$
and  we have
\begin{enumerate}
\item \label{ite:comp-stop}
$T_{\tau}$ and $T_{\kappa}$ are measurable maps on $\Omega$ and
$\Ttau \circ \Tkappa = \Trunc{\tau \wedge \kappa}$
\item \label{ite:fix-filtr}
$\sigma(T_\tau) = \sets{A\in \sF}{ T_\tau^{-1}(A) = A}$,
and
\begin{align*}
 \text{ ``$A\in\sigma(T_\tau)$'' \quad is equivalent to \quad }
 ``\omega\in A\  \Leftrightarrow \ \omega_{\leq\tau}\in A''
 \end{align*}
\item \label{ite:tau-omega}
$\tau(\omega) = \tau(T_{\tau}(\omega))$,
and hence $\tau$ is $\sigma(T_\tau)$-measurable
\item \label{ite:tau-sigma}
$\sigma(T_{\tau}) = \sF_{\tau}$,
where $\sF_\tau=\sets{A\in \sF}{A\cap \set{\tau\leq
t}\in \sF_t, \forall t\in\Time}$
\item \label{ite:F_t-meas}
Let $(S,\sS)$ be a standard Borel space.
An $(\sF,\sS)$-measurable map $Z:\Omega\to S$
is $(\sF_{\tau},\sS)$-measurable if and
only if $Z\circ T_{\tau} = Z$.
\end{enumerate}
\end{proposition}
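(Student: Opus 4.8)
The engine of the whole proposition is one deterministic observation together with its stopping-time upgrade, so I would isolate these first. From Definition \ref{def:trunc} one has $\sigma(T_t)=T_t^{-1}(\sF)$, and since $T_t\circ T_t=T_{t\wedge t}=T_t$ is idempotent, the membership $A\in\sigma(T_t)$ is \emph{equivalent} to $T_t^{-1}(A)=A$; the same composition identity gives $\sF_s\subseteq\sF_t$ for $s\le t$ (write $A=T_s^{-1}(B)$ and use $T_t^{-1}(A)=T_{s\wedge t}^{-1}(B)=A$). Building on this, the crucial lemma I would prove is that for a stopping time $\tau$ and $s\le t$ the set $\{\tau\le s\}\in\sF_s\subseteq\sF_t$ is $T_t$-invariant, and — using $T_s(T_t\omega)=T_{s\wedge t}\omega=T_s\omega$ — that this forces
\[ \tau(T_t\omega)=\tau(\omega)\ \text{ on }\ \{\tau\le t\},\qquad \tau(T_t\omega)\ge t\ \text{ on }\ \{\tau>t\}, \]
equivalently $\tau(T_t\omega)\wedge t=\tau(\omega)\wedge t$ for every deterministic $t$. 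This is the one genuinely nontrivial step; everything else is bookkeeping around it.

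With the lemma in hand, the first assertion of \ref{ite:tau-omega}, namely $\tau=\tau\circ T_\tau$, is just its restriction to $\{\tau\le t\}$ evaluated at $t=\tau(\omega)$ (the case $\tau(\omega)=\infty$ being trivial since $T_\infty=\Id$). For \ref{ite:comp-stop}, measurability of $T_\tau$ follows by writing it as the composition of the measurable map $\omega\mapsto(\tau(\omega),\omega)$ with the jointly measurable $(t,\omega)\mapsto T_t(\omega)$ (extended across $t=\infty$ by the identity); and the identity $T_\tau\circ T_\kappa=T_{\tau\wedge\kappa}$ follows pointwise: for fixed $\omega$ put $b=\kappa(\omega)$ and use $T_{\tau(T_b\omega)}(T_b\omega)=T_{\tau(T_b\omega)\wedge b}(\omega)$ together with the lemma applied at the deterministic time $b$, which gives $\tau(T_b\omega)\wedge b=\tau(\omega)\wedge b$. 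Taking $\kappa=\tau$ yields idempotence $T_\tau\circ T_\tau=T_\tau$, and then \ref{ite:fix-filtr} is immediate: $\sigma(T_\tau)=T_\tau^{-1}(\sF)$ and idempotence give ``$A\in\sigma(T_\tau)\Leftrightarrow T_\tau^{-1}(A)=A$'', which is exactly the reformulation $\omega\in A\Leftrightarrow\omega_{\le\tau}\in A$. The second half of \ref{ite:tau-omega} follows because $T_\tau^{-1}(\tau^{-1}(B))=\tau^{-1}(B)$ for Borel $B$, by $\tau=\tau\circ T_\tau$.

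The substantive remaining point is \ref{ite:tau-sigma}, $\sigma(T_\tau)=\sF_\tau$. For $\sigma(T_\tau)\subseteq\sF_\tau$ I would take a $T_\tau$-invariant $A$ and check that $A\cap\{\tau\le t\}$ is $T_t$-invariant: membership of $\{\tau\le t\}$ transfers by its $T_t$-invariance, and membership of $A$ transfers because on $\{\tau\le t\}$ the lemma gives $\tau(T_t\omega)=\tau(\omega)$, whence $T_\tau(T_t\omega)=T_{\tau(\omega)\wedge t}\omega=T_\tau\omega$, so the $T_\tau$-invariance of $A$ can be relayed through $T_t$. Conversely, for $A\in\sF_\tau$ and fixed $\omega$ with $s:=\tau(\omega)<\infty$, the chain $\omega\in A\Leftrightarrow\omega\in A\cap\{\tau\le s\}\Leftrightarrow T_s\omega\in A\cap\{\tau\le s\}\Leftrightarrow T_\tau\omega\in A$ uses only the $T_s$-invariance of $A\cap\{\tau\le s\}\in\sF_s$ (and is vacuous when $s=\infty$, where $T_\tau\omega=\omega$), so $A$ is $T_\tau$-invariant. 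Finally, \ref{ite:F_t-meas} follows from $\sF_\tau=\sigma(T_\tau)$: if $Z\circ T_\tau=Z$ then each $Z^{-1}(B)=T_\tau^{-1}(Z^{-1}(B))$ lies in $\sigma(T_\tau)$ by \ref{ite:fix-filtr}; conversely, if $Z$ is $\sigma(T_\tau)$-measurable and $Z(\omega)\ne Z(T_\tau\omega)$ for some $\omega$, separating these two points of the standard Borel space $S$ by a set $B\in\sS$ would contradict the $T_\tau$-invariance of $Z^{-1}(B)$, forcing $Z=Z\circ T_\tau$. The main obstacle throughout is the key lemma; once the $T_t$-invariance of $\{\tau\le s\}$ for $s\le t$ is secured, parts \ref{ite:comp-stop}--\ref{ite:F_t-meas} are essentially formal.
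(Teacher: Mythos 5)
Your proof is correct, and its skeleton is the same as the paper's: characterize $\sigma(T_t)$-membership by $T_t$-invariance via idempotence, use the invariance of $\set{\tau\leq s}$ to control $\tau\circ T_t$, and then run the two inclusions for $\sF_\tau=\sigma(T_\tau)$. The organizational difference is that you front-load the deterministic-time lemma $\tau(T_t\omega)\wedge t=\tau(\omega)\wedge t$ and derive everything from it, whereas the paper proves the parts in order and obtains the analogous fact only in part (3), via the single set $\set{\tau=t}\in\sF_t$. Your ordering pays off in part (1): the identity $\Ttau\circ\Tkappa=\Trunc{\tau\wedge\kappa}$ is \emph{not} just the pointwise semigroup law (one must reconcile $\tau(T_{\kappa(\omega)}\omega)$ with $\tau(\omega)$, which genuinely uses that $\tau$ is a stopping time), and your lemma supplies exactly the missing step that the paper's one-line justification glosses over. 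The other deviation is in part (5), where the paper invokes the Doob--Dynkin lemma to produce $\zeta$ with $Z=\zeta\circ T_\tau$, while you argue by separating the putative distinct values $Z(\omega)\neq Z(T_\tau\omega)$ with a set $B\in\sS$ and contradicting the $T_\tau$-invariance of $Z^{-1}(B)$; both are valid for standard Borel $S$, and yours is marginally more elementary. No gaps.
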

\begin{proof}\
\begin{enumerate}
\item
Measurability of $\Ttau$ follows directly from the measurability
of stopping times and
the joint measurability of $(t,\omega)\mapsto
\Tt(\omega)$ on $(\Time\cup\set{\infty}) \times \Omega$.
Applying Definition \ref{def:trunc}, part \eqref{ite:proj-trunc}
pointwise for $t=\tau(\omega)$ and $s=\kappa(\omega)$
gives $\Ttau \circ \Tkappa = \Trunc{\tau \wedge \kappa}$.
\item
By part (1) we have $T_\tau = T_\tau\circ T_\tau$ for each $\tau\in\Stop$,
and so for any $A\in\sF$, we have
\begin{align*}
	A = T_\tau^{-1}(B) \text{ for some $B\in \sF$}
	\quad\Leftrightarrow\quad
	A = T_\tau^{-1}(A).
\end{align*}
Furthermore the condition $A=T_\tau^{-1}(A)$ is equivalent to:
$$ \omega\in A \quad\Leftrightarrow\quad \omega_{\leq\tau}\in A $$
\item
Fix $\omega\in\Omega$, let $t=\tau(\omega)$, and let $A=\{\tau=t\}$.
Since $\tau\in\Stop$, then $A\in\sF_t=\sigma(T_t)$.
Combining part (2) with the fact that $\omega\in A$
implies $T_t(\omega)\in A$. Therefore:
$$ \tau(T_{\tau(\omega)}(\omega)) =
  \tau(T_t(\omega)) = t = \tau(\omega) $$
\item
For the forward inclusion, let $A\in\sigma(T_\tau)$.
Thanks to \eqref{ite:fix-filtr} above, we have $A=T_{\tau}^{-1}(A)$.
Therefore for all $t\in\Time$ we have:
 \begin{align*}
   A\cap \set{\tau\leq t}
   &= \sets{ \omega\in\Omega}{ T_{\tau(\omega)}(\omega) \in A,
   \tau(\omega)\leq t} \\
   &= \sets{ \omega\in\Omega}{ T_{\tau(\omega)\wedge t}(\omega) \in A,
   \tau(\omega)\leq t} = T_{\tau\wedge t}^{-1}(A) \cap \set{\tau\leq
   t}\in\sF_t,
 \end{align*}
where we used the fact that $T_{\tau\wedge t}  = T_{\tau \wedge t} \circ
T_{t}$ is $\sF_t$-measurable.
Therefore $A\in\sF_\tau$,
and hence $\sigma(T_\tau)\subset\sF_\tau$.
\vspace{2mm}\\
For the backward inclusion, let $A\in\sF_\tau$.
By part \eqref{ite:fix-filtr}, it suffices to show:
$$ \omega\in A \quad\Leftrightarrow\quad \omega_{\leq\tau}\in A $$
First suppose $\omega\in A$ and let $t=\tau(\omega)$.
Since $A\in\sF_\tau$,
then $\omega \in A \cap \{\tau \leq t\} \in \sF_t$.
Applying \eqref{ite:fix-filtr} to $A \cap \{\tau \leq t\}$ gives
$ \omega_{\leq\tau} \in A \cap \{\tau \leq t\} \subset A $.
\vspace{2mm}\\
For the other direction, suppose $\omega_{\leq\tau}\in A$.
By part \eqref{ite:tau-omega} we have $\tau(\omega_{\leq\tau})=\tau(\omega)$
and hence $\omega_{\leq\tau} \in A\cap\{\tau\leq t\} \in \sF_t$.
Applying \eqref{ite:fix-filtr} to $A \cap \{\tau \leq t\}$ gives
$ \omega \in A \cap \{\tau \leq t\} \subset A $.
\item If $Z=Z\circ T_{\tau}$, then $Z$ is $\sF_{\tau}$-measurable as a measurable
transformation of the $\sF_{\tau}$-measurable map $T_{\tau}$.
Conversely, if $Z$ is
$\sF_{\tau}$-measurable, the standard Borel property and the Doob-Dynkin
lemma guarantee the existence of a measurable map $\zeta: \Omega\to S$ such
that $Z = \zeta\circ T_{\tau}$. A composition with $T_{\tau}$ yields
that
\[ Z\circ T_{\tau} = \zeta\circ T_{\tau} \circ T_{\tau} = \zeta \circ
T_{\tau} = Z.\qedhere\]
\end{enumerate}
\end{proof}

\subsection{Constructions on T-spaces}\
\label{sss:categ}

Next, we describe several natural notions and
constructions on T-spaces, as well as various operations that produce new
T-spaces from the old ones. For the remainder of this subsection, let
$\filt$ and $\tfilt$ be two T-spaces, with truncations $\prst{T_t}$ and
$\prst{\tT_t}$, respectively.

\subsubsection{Structure-preserving maps}
A useful structure-preserving notion in the case of T-spaces turns out to
be non-anticipation:
\begin{definition}
A measurable map $F:\Omega \to\tilde{\Omega}$ is said to be
\define{non-anticipating}
if it is $(\sF_t,\tilde{\sF}_t)$-measurable, i.e.
  $F^{-1}(\tilde{\sF}_t) \subseteq \sF_t$ for each
  $t\in\Time$.
  \end{definition}
We have the following characterization using the truncation maps:
\begin{proposition}
\label{pro:T-morph-char}
A measurable map $F:(\Omega,\sF) \to (\tOmega,\tilde{\sF})$ is
non-anticipating if and only if
\[\tT_t \circ F \circ T_t = \tT_t\circ F \eforall t\in\Time.\]
\end{proposition}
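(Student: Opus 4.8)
The plan is to reduce the stated functional identity to the measurability criterion of Proposition \ref{pro:prop-trunc}\eqref{ite:F_t-meas}, applied at a deterministic time $\tau=t$. The key observation is that, for each fixed $t$, the condition $\tT_t\circ F\circ T_t=\tT_t\circ F$ is exactly the assertion that the map $Z:=\tT_t\circ F$ satisfies $Z\circ T_t=Z$. By part \eqref{ite:F_t-meas} (taken with $\tau=t$ and $(S,\sS)=(\tOmega,\tilde{\sF})$), this is equivalent to $Z=\tT_t\circ F$ being $\sF_t$-measurable. Thus the whole proof amounts to showing that $F$ is non-anticipating if and only if $\tT_t\circ F$ is $\sF_t$-measurable for every $t\in\Time$.

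The bridge between these two formulations is the identity $\tilde{\sF}_t=\sigma(\tT_t)=\tT_t^{-1}(\tilde{\sF})$, which holds by the definition of a T-space together with the fact that $\tT_t$ is $\tilde{\sF}$-measurable (so that ``generated by'' and ``pulled back'' coincide). First I would record that, for each $t$,
\[ F^{-1}(\tilde{\sF}_t)=F^{-1}\big(\tT_t^{-1}(\tilde{\sF})\big)=(\tT_t\circ F)^{-1}(\tilde{\sF})=\sigma(\tT_t\circ F).\]
In words: the preimage under $F$ of $\tilde{\sF}_t$ is precisely the $\sigma$-algebra generated by $\tT_t\circ F$. Consequently, the non-anticipation requirement $F^{-1}(\tilde{\sF}_t)\subseteq\sF_t$ at level $t$ holds if and only if $(\tT_t\circ F)^{-1}(\tilde{\sF})\subseteq\sF_t$, i.e.\ if and only if $\tT_t\circ F$ is $\sF_t$-measurable.

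With this equivalence in hand, both directions are immediate. For the forward implication, if $F$ is non-anticipating then the displayed computation shows $\tT_t\circ F$ is $\sF_t$-measurable, and Proposition \ref{pro:prop-trunc}\eqref{ite:F_t-meas} yields $\tT_t\circ F\circ T_t=\tT_t\circ F$. Conversely, if this functional identity holds for every $t$, then the same part of Proposition \ref{pro:prop-trunc} makes each $\tT_t\circ F$ an $\sF_t$-measurable map, whence $F^{-1}(\tilde{\sF}_t)=(\tT_t\circ F)^{-1}(\tilde{\sF})\subseteq\sF_t$ for all $t\in\Time$, which is exactly non-anticipation.

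I do not anticipate a serious obstacle here: the entire content lies in recognizing that non-anticipation at time $t$ is the same as $\sF_t$-measurability of $\tT_t\circ F$, after which Proposition \ref{pro:prop-trunc}\eqref{ite:F_t-meas} does all the work. The one point that warrants care is the passage $\tilde{\sF}_t=\tT_t^{-1}(\tilde{\sF})$, since it relies on both $\tilde{\sF}_t=\sigma(\tT_t)$ (from the definition of the truncation) and the $\tilde{\sF}$-measurability of $\tT_t$; once that identity is justified, everything else is a formal manipulation of preimages.
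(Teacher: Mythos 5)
Your proposal is correct and follows essentially the same route as the paper: both arguments rest on the identity $\tilde{\sF}_t=\sigma(\tT_t)=\tT_t^{-1}(\tilde{\sF})$ to recast non-anticipation at time $t$ as $\sF_t$-measurability of $\tT_t\circ F$, and then invoke Proposition \ref{pro:prop-trunc}\eqref{ite:F_t-meas} to convert that measurability into the functional identity $\tT_t\circ F\circ T_t=\tT_t\circ F$. No gaps.
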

\begin{proof}
By Proposition \ref{pro:prop-trunc} part \eqref{ite:fix-filtr} we have
$\tilde\sF_t = \sigma(\tilde T_t) = \tilde T_t^{-1}(\tilde\sF)$,
and by part \eqref{ite:F_t-meas} we have
$\tilde T_t\circ F$ is $\sF_t$-measurable if and only if
$\tT_t \circ F \circ T_t = \tT_t\circ F$.
Therefore for all $t\in\Time$:
\begin{align*}
  F^{-1}(\tilde\sF_t) \subset \sF_t
  &\quad\Leftrightarrow\quad
  F^{-1} (\tilde T_t^{-1} (\tilde\sF)) \subset \sF_t \\
  &\quad\Leftrightarrow\quad
  \tilde T_t \circ F \text{ is $\sF_t$-measurable} \\
  &\quad\Leftrightarrow\quad
  \tT_t \circ F \circ T_t = \tT_t\circ F \qedhere
\end{align*}
\end{proof}
\begin{remark}
\label{rem:T-morph}
One could also consider an alternative notion of a structure-preserving map
where we
require that $\tilde{T}_t\circ F = F \circ T_t$ for all $t\in\Time$. Proposition
\ref{pro:T-morph-char} and the fact that $\tilde{T}_t \circ \tilde{T}_t =
\tilde{T}_t$ imply that T-morphisms are non-anticipating, but the converse
is not true.

\end{remark}
\subsubsection{T-subspaces.}

We say that a T-space $\tfilt$ is a \define{$T$-subspace} of $\filt$ if
$\tOmega \subseteq \Omega$ and $\tilde{\sF}_t \subseteq \sF_t$, for all
$t\in \Time$. As the following result show, subsets preserved by
truncation inherit a structure of a T-space:
\begin{proposition}
\label{pro:sub}
Let $\filt$ be a T-space, and let $\Omega'$ be a measurable subset of
$\Omega$ with the property that $T_t(\Omega') \subseteq \Omega'$, for all
$t\in\Time$. Then the family $\prst{T'_t}$ given by $T'_t = T_t|_{\Omega'}$,
is a truncation,
and the filtered space $(\Omega', \sF', \prst{\sF'_t})$,
given by $\sF' = \sets{B\in \sF}{B\subseteq \Omega'}$, $\sF_t =
\sigma(T'_t)$, $t\in \Time$, is a T-space and a subspace of $\filt$.
\end{proposition}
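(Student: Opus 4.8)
The plan is to verify the three defining clauses of Definition \ref{def:trunc} for the candidate structure $(\Omega', \sF', \prst{\sF'_t})$ in turn, with the invariance hypothesis $T_t(\Omega') \subseteq \Omega'$ doing the essential work throughout. The measurable-space half of clause (1) is immediate: since $\Omega'$ is a measurable subset of the standard Borel space $(\Omega, \sF)$, the trace $\sigma$-algebra $\sF' = \sets{B\in\sF}{B\subseteq\Omega'}$ makes $(\Omega',\sF')$ standard Borel, because a Borel subset of a standard Borel space is itself standard Borel. Note also that $\sF' = \sF|_{\Omega'} := \sets{A\cap\Omega'}{A\in\sF}$, using $\Omega'\in\sF$.

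Next I would check that $\prst{T'_t}$ is a truncation. That each $T'_t$ maps $\Omega'$ into $\Omega'$ is exactly the invariance hypothesis, so $T'_t:\Omega'\to\Omega'$ is well defined. Joint measurability \eqref{ite:meas-trunc} transfers from $\Omega$: the map $(t,\omega)\mapsto T_t(\omega)$ is jointly measurable on $\Time\times\Omega$ by assumption, and restricting its domain to $\Time\times\Omega'$ and its codomain to the measurable set $\Omega'$ preserves this, since for $B\in\sF'$ one has $(T')^{-1}(B) = T^{-1}(B)\cap(\Time\times\Omega')$. The projection identity \eqref{ite:proj-trunc}, $T'_t\circ T'_s = \Trunc{s\wedge t}|_{\Omega'}$, is inherited pointwise from $\Tt\circ \Ts = \Trunc{s\wedge t}$ on $\Omega$; here invariance is what guarantees that $T'_s(\omega)\in\Omega'$, so the composition never leaves $\Omega'$.

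The crux is clause \eqref{ite:proj-trunc}'s companion $\sF_t = \sigma(T_t)$ together with the remaining half of (1), namely $\sF' = \bigvee_t \sF'_t$, and both reduce to the single identity
\[ \sigma(T'_t) = \sF_t|_{\Omega'} = \set{A\cap\Omega' : A\in\sF_t} \eforall t\in\Time. \]
For ``$\subseteq$'', any $C\in\sigma(T'_t)$ has the form $C = (T'_t)^{-1}(D)$ with $D\in\sF'$, and since $T'_t = \Tt|_{\Omega'}$ this equals $T_t^{-1}(D)\cap\Omega'$ with $T_t^{-1}(D)\in\sigma(T_t) = \sF_t$. For ``$\supseteq$'', given $A\in\sF_t$, invariance gives, for $\omega\in\Omega'$, the equivalence $T_t(\omega)\in A \Leftrightarrow T_t(\omega)\in A\cap\Omega'$, whence $A\cap\Omega' = (T'_t)^{-1}(A\cap\Omega')\in\sigma(T'_t)$ because $A\cap\Omega'\in\sF'$. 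Taking $\sF'_t := \sigma(T'_t)$ as the definition then settles the clause $\sF'_t=\sigma(T'_t)$, and passing to the join, using that the trace operation commutes with generation,
\[ \bigvee_{t} \sF'_t = \bigvee_{t}\big(\sF_t|_{\Omega'}\big) = \Big(\bigvee_{t}\sF_t\Big)\Big|_{\Omega'} = \sF|_{\Omega'} = \sF', \]
which completes (1).

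Finally, the same identity $\sF'_t = \sF_t|_{\Omega'}$ realizes the T-subspace relationship: it says the trace filtration on $\Omega'$ coincides with $\sigma(T'_t)$, equivalently that the inclusion $\Omega'\hookrightarrow\Omega$ pulls every $\sF_t$-set back into $\sF'_t$. I expect the main obstacle to be the bookkeeping around this identity rather than any deep difficulty: one must read ``$\sF'_t\subseteq\sF_t$'' through the trace, since a set $C\in\sigma(T'_t)$, viewed as a subset of $\Omega$, need not itself be $\sF_t$-measurable — it is only the restriction to $\Omega'$ of an $\sF_t$-set. Everything else is a routine transfer of the $\Omega$-level axioms across the restriction, powered at each step by $T_t(\Omega')\subseteq\Omega'$.
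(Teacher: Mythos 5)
Your proof is correct and takes the same route as the paper's, which simply asserts that the clauses of the definition of a T-space are inherited by the restriction; you have supplied the details the paper leaves implicit, in particular the key identity $\sigma(T'_t)=\sets{A\cap\Omega'}{A\in\sF_t}$ and the observation that the subspace relation $\sF'_t\subseteq\sF_t$ must be read through the trace (since $\Omega'$ need not lie in $\sF_t$). Nothing to fix.
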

\begin{proof}
Clearly $(\Omega', \sF')$ is a subspace of $(\Omega, \sF)$.
To satisfy Definition \ref{def:TC} of T-spaces,
note that part (1) follows from the construction of $\Omega'$ and $\sF'$,
and the properties of part (2) are passed down from $T$ to $T'$.
\end{proof}
\begin{example}
\label{exa:subspaces}
Truncation operators on $D_E$ leave invariant
several important measurable subsets of $D_E$. Among the examples are
\begin{enumerate}
\item $C_E$, the family of all everywhere continuous elements of $D_E$,
\item $D^{E_0}_{E}$, the family of paths in $D_E$ which start from a point
in $E_0$, and
\item $D_{E^F}$, the family of paths in $D_E$ stopped once they hit the
closed subset $F$ of $E$.
\item $D^{fv}\, (D^{\uparrow}, D^{\downarrow})$, the family of all paths in $\dr$
all of whose components are of finite variation (nondecreasing,
nonincreasing)
\item $\Lip^{L}_{\R}$, the family of all Lipschitz continuous maps from
$[0,\infty)$ to $\R$, with the Lipschitz constant at most $L$.
\end{enumerate}
More examples can be produced by various intersections of the above sets.
\end{example}
\subsubsection{Products.}
\label{par:prod}
T-spaces behave well under products, too. Indeed,
the standard Borel space $\hOmega =
\Omega \times \tOmega$ admits a natural truncation given by
the family
$\prst{\hat{\Tt}}$ of maps on $\hOmega$ defined by
\begin{equation}
  \label{equ:prod-maps}
  \begin{split}
    \tilde{\Tt}(\omega,\tomega) = (\Tt(\omega), \tilde{T}_t(\tomega)).
  \end{split}
\end{equation}
The resulting T-space $\hOmega$, together with the natural filtration
generated by $\prst{\hat{T}_t}$,  is called
the \define{product} of the
truncated spaces $\Omega$ and $\tOmega$. It is not difficult to see that
the same construction can be applied to countable products of truncated
spaces.

\subsubsection{State maps.} \label{sss:state}
A measurable map $X:\Omega\to E$, where $E$ is a Polish space
is called a \define{state map}. Such maps define a class of
progressively measurable $E$-valued stochastic
processes on $\Omega$ via
\[ X_t(\omega) = X(T_t(\omega)), t\in \Time\cup\{\infty\}, \omega\in\Omega\]
(where the convention $T_\infty(\omega)=\omega$ is used).
We can also write $X_{\tau}$ for $X\circ T_{\tau}$ when $\tau\in \Stop$.
\begin{remark}
\label{rem:state-map}
Our notion of a state corresponds intuitively to that used in the theory of
Markov processes, even though we insist upon assigning a state to each
$\omega\in\Omega$. If one pictures $T_t(\omega)$ as trajectory $\omega$
stopped at $t$, then $X_t(\omega)$ is simply the ``state'' at which
$\omega$ is stopped. When $\omega$ is not necessarily in the image of some
$T_t$, we assign the state abstractly imagining it to be the ``value of
$\omega(\infty)$''.
\end{remark}
\subsubsection{Actions on measures and kernels}
For a probability measure $\mu\in \prob(\Omega)$, and a stopping time
$\tau\in \Stop$ we define the
\define{truncated measure} $\mlt$ as the push-forward of $\mu$ via the truncation map $T_{\tau}$.

Two analogous operations can be applied to kernels $\nu$ from $\Omega$ to
$\Omega$. We can truncate the second argument, leading to the
\define{truncated kernel} $\nu_{\leq \tau}$, where, for each
$\omega\in\Omega$, $\nu_{\leq \tau}(\omega,\cdot)$ is the truncation of the
measure $\nu(\omega,\cdot)$, as above. On the other hand, we can define
the \define{restricted kernel} $\nu^{\leq
\tau}$ by truncating in the first argument, i.e., by setting
\begin{align*}
\nlt(\omega, B) = \nu( T_{\tau}(\omega), B).
\end{align*}
That $\nlt$ is, indeed, a kernel follows from the
fact that a Borel measurable function (like $T_{\tau})$
between two Polish spaces remains measurable under the pair of universal
$\sigma$-algebras (see \cite[Proposition 7.44, p.~172]{BerShr78}).

\subsection{TC-spaces (truncation-concatenation spaces)}
\begin{definition}
\label{def:TC}
A \define{truncation-concatenation space} (or a \define{TC-space}) is a
truncation space $\filt$
together with
a
measurable subset $\sC \subseteq
\Omega \times \Time \times \Omega$ - called the \define{compatibility set}
- and a
measurable map $\ast: \sC \to \Omega$ -
called the \define{concatenation operator},
such that the following
conditions hold:
\begin{enumerate}
\item for all $\omega,\omega' \in \Omega$ and $s,t\in\Time$ we have
  \begin{align}
  \label{equ:TC-comp}
    (\omega,t,\omega')\in\sC
    \Leftrightarrow
    (\omega_{\leq t},t,\omega') \in \sC
    \Leftrightarrow
    (\omega, t, \omega'_{\leq s})\in \sC.
  \end{align}
\item if $(\omega,t,\omega')\in\sC$, then, for all $s\in \Time$ we have
\begin{align}
\label{equ:TC-1}
   \omega\ast_t \omega' &= \omega_{\leq t} \ast_t \omega', \text{ as well
   as } \\
\label{equ:TC-2}
(\omega\ast_t \omega')_{\leq s} &= \begin{cases}
\omega_{\leq s}, & s\leq t \\ \omega \ast_t \omega_{\leq s-t}, & s> t
\end{cases}
\end{align}
\end{enumerate}
\end{definition}
The action of the concatenation operator on the triplet
$(\omega,t,\omega') \in \sC$ is denoted by $\omega \ast_t \omega'$ and
is usually interpreted as an element of
$\Omega$ ``obtained by following
$\omega$ until time $t$, with $\omega'$ attached afterwards''.
The set $\sC$ - the domain of $\ast$ - may encode a compatibility relation
necessary for the concatenation to be possible.
The set of all $\omega' \in \Omega$ such that $
(\omega,t,\omega') \in \sC$ is denoted by $\sC_{\omega,t}$, and we say that
\define{$\omega'$ is
compatible with $\omega$ at $t$} if
$\omega'\in\sC_{\omega,t}$.

In many examples
compatibility is established via a state map (as defined in subsection
\ref{sss:state} above):
\begin{definition}
\label{def:comp-X}
Given a TC-space $\filt$ and a state map $X$, we say that
the concatenation operator $\ast$
\begin{enumerate}
\item \define{factors through $X$} if \tabto{0.25\linewidth}
$X_t(\omega) = X_0(\omega')$ \tabto{0.45\linewidth} $\Implies (\omega,t,\omega')\in\sC$, and
\item \define{is a factor of $X$} if  \tabto{0.25\linewidth}
$(\omega,t,\omega')\in\sC$ \tabto{0.45\linewidth} $\Implies X_t(\omega) = X_0(\omega')$.
\end{enumerate}
\end{definition}

When needed, we also define $\omega \ast_{\infty} \omega'=\omega$,
declaring, implicitly, any two elements of $\Omega$
compatible at $t=\infty$, so that $\sC_{\omega,\infty}=\Omega$.  This way,
as in the case of the truncation spaces, the time-set $\Time$ can be extended
to the set of all stopping times by setting:
\begin{align}
\label{equ:conc-prop}
	\omega \at\omega' = \omega \ast_{\tau(\omega)} \omega'
        \efor \omega' \in \sC_{\omega,\tau(\omega)}.
\end{align}
By Proposition \ref{pro:prop-trunc}, part (3), $\tau(\omega_{\leq \tau}) =
\tau(\omega)$, and, so,
the stopping-time analogue of \eqref{equ:TC-1} holds in TC spaces:
\begin{align*}
		 \omega\at \omega' = \omega \ast_{\tau(\omega)} \omega'
		 = \omega_{\leq \tau(\omega)} \ast_{\tau(\omega)} \omega'
		 = \omega_{\leq \tau} \ast_{\tau(\omega_{\leq \tau})} \omega'
		 = \omega_{\leq \tau} \at \omega'.
\end{align*}

\subsection{Examples of TC-spaces}\

\label{sss:exam-TC}
We go through the list
of examples of T-spaces from subsection \ref{sss:exam-T} and describe how a
natural concatenation operator can be introduced.

\subsubsection{Strict concatenation on path spaces $D_E$ and $C_E$.}
\label{par:strict-conc}
We consider the space $D_E$ with the truncation $\omega_{\leq t}(s) =
\omega(s\wedge t)$.
The \define{strict concatenation} operation $\bullet$ is given by
 \begin{align}
 \label{equ:str-form}
    (\omega \bullet_t\omega')_s = \begin{cases}
     \omega(s), & s\leq t \\ \omega'(s-t), & s>t,
   \end{cases}
 \end{align}
for $\omega,\omega'\in D_E$,
where $\omega$ and $\omega'$ are considered $t$-compatible if and only if
$\omega(t) = \omega'(0)$. To check that $\bullet$ is, indeed, a
concatenation is straightforward, and we only remark that the joint
measurability of $\bullet$ (in all three of its arguments) follows from the
observation that, as a function of the inner argument $t$, it is
right-continuous in the Skorokhod topology.
When applied on its compatibility set $\sC$,
the operation $\bullet$ preserves continuity, so
it can be used to define a concatenation operator on $C_E$, as well.
Finally,
it is straightforward that
\[ X(\omega) := \liminf_{t\to\infty} \omega(t) \]
defines
an $E=\bar{\R}$-valued state map with the property $X_t(\omega) = \omega(t)$
for $t\in\Time$ and such that the concatenation operator $\bullet$
factors through it.

\begin{remark}
Many subspaces of $D_E$, in addition to $C_E$, are
closed under the strict concatenation. The reader will easily check that
all the spaces in Example
\ref{exa:subspaces} have this property; it follows that they are
TC-spaces
themselves.
\end{remark}

\subsubsection{Adjusted concatenation on $D_E$ and $C_E$}
When $E$ admits an additive structure, we can define
another concatenation operator on it, namely the \define{adjusted
concatenation} operator $\star$. It is given for $\omega,\omega'\in D_E$ by
 \begin{align}
 \label{equ:adj-concat}
    (\omega \star_t \omega')_s = \begin{cases}
     \omega(s), & s\leq t \\ \omega(t) + \omega'(s-t) -\omega'(0), & s>t,
   \end{cases}
 \end{align}
with no restrictions on compatibility, i.e., with $\sC = \Omega \times
\Time \times \Omega$. It is clear that the strict and the adjusted
concatenation operators agree on the compatibility set of $\bullet$, and
that $\ast$ can be restricted to $C_E$ without loosing any properties
required of a concatenation.

\subsubsection{Spaces of measures.}
We define the concatenation operator $\ast$ on the space
$\Omega=\sM^{\#}([0,\infty)\times E)$, described in subsection
\ref{sss:exam-T} as follows. For $\mu,\mu'\in \Omega$, we set
\[ (\mu \ast_t \mu')(A)=
\mu\Big( ([0,t)\times E) \cap A \Big )+
\mu'\Big( \big(([t,\infty)\times E) \cap A\big) - t \Big ),\]
where $ B-t = \sets{(x,s-t)}{(x,s)\in B}$, for $B\subseteq [t,\infty)\times
E$. No compatibility restrictions are imposed.
There should be no difficulty in checking that $\ast$ satisfies all
defining properties of a concatenation. We also note that the same
construction applies when $\sM^{\#}$ is replaced by $\sM^{f}$.

In the case
when $\sM^{p}$ is considered, the above operation does not preserve total
mass. This cannot be fixed by restricting compatibility, but can be
overcome   by
  defining another concatenation operation as follows:
\[ (\mu \, \tilde{\ast}_t\, \mu')(A)=
\mu\Big( ([0,t)\times E) \cap A \Big )+
\Big(1-\mu([0,t)\times E)\Big) \mu'\Big( \big(([t,\infty)\times E) \cap A\big) - t \Big ),\]

\subsubsection{$\lzer_A$ spaces.} When the underlying measure $\ld$ is the Lebesgue measure, we usually concatenate $\lzer_A$ functions as follows:
\[ (f\ast_t g)_u = \begin{cases} f_u, & u\leq t \\ g_{u-t}, & u > t
\end{cases},\]
with no compatibility restriction.
\subsection{Constructions and structure-preserving maps on TC spaces} Like
T-spaces, TC-spaces come with natural subspace and product constructions.
Their properties extend those of naked T-spaces in a predictable way, so we
skip any further discussion. The following notion of
a structure-preserving map on TC spaces will play a major role in
Section \ref{sec:mart-gen} below.
\begin{definition}
A measurable map $F:\Omega \to \tOmega$ between two TC-spaces, with
concatenation operators $\ast$ and $\tilde{\ast}$ (and compatibility sets
$\sC$ and $\tilde{\sC}$) is
called a \define{TC-morphism} if
\label{def:TC-morph}
\begin{enumerate}
  \item $F$ is non-anticipating, and
\item for all $t\in\Time$,  and all $\omega,\omega'\in\Omega$ with $\omega' \in
  \sC_{\omega,t}$ we have $F(\omega') \in \tilde{\sC}_{F(\omega),t}$ and
  \[ F(\omega \ast_t \omega') = F(\omega) \, \tilde{\ast}_t\,  F(\omega').\]
  \end{enumerate}
\end{definition}
\subsection{Concatenation of measures in TC-spaces}\

\label{sss:conc-meas}
The ability to concatenate elements of $\Omega$ extends to probability
measures and kernels on $\Omega$. We say that a measure $\mu \in
\prob(\Omega)$ and a kernel
$\nu \in \kernel(\Omega)$ on a TC-space are \define{compatible} at
the stopping time $\tau$ if
\[ \nlt_\omega(\sC_{\omega, \tau(\omega)})= 1, \text{ for
$\mu$-almost all $\omega$.
}\]

When $\ast$ factors through a state map $X$, a sufficient condition for
compatibility of $\mu\in\prob(\Omega)$ and $\nu\in\kernel(\Omega)$ at
$\tau$ is that
  \begin{align}
  \label{equ:crit-comp}
      \nlt_{\omega} \Big(X_0= X_{\tau}(\omega)\Big) = 1, \text{ for $\mu$-almost
    all $\omega$ with } \tau(\omega)<\infty.
  \end{align}
Using the convention, as above, that $\Omega\times \set{\infty} \times
\Omega' \subseteq \sC$,
we also note that, given a stopping time $\tau$, the set $\sC_{\tau} =
\sets{(\omega,\omega')}{ (\omega,\tau(\omega),\omega') \in \sC }$ is
a pullback of the Borel set $\sC$
via the measurable map $(\omega,\omega')
\mapsto (\omega,\tau(\omega),\omega')$, and, therefore, itself
measurable.

For $\mu \in \prob(\Omega)$ and a $\tau$-compatible
kernel $\nu\in\kernel(\Omega)$
let $\mu \otimes \nlt \in \prob(\Omega\times\Omega)$
denote the product of $\mu$ and the $\tau$-restriction of $\nu$.
The \define{concatenation} $\mu\ast_
{\tau} \nu$ is then defined as the push-forward of this product
via the measurable map $C_{\tau} \ni (\omega,\omega') \mapsto \omega \ast_{\tau(\omega)}
\omega'$. We note that the compatibility relation introduced above implies
that $\mu\otimes \nlt (C_{\tau})=1$, so that $\mu\at \nu$ is, indeed, a
probability measure. Moreover, we have
\begin{align*}
 	\int G(\omega)\, (\mu\at \nu)(d\omega)
 	&= \int G(\omega \at
 	\omega')\, (\mu\otimes\nlt)(d\omega,d\omega')
 	 = \iint G(\omega \at
 	\omega')\, \nlt_\omega
        (d\omega')\, \mu(d\omega),
\end{align*}
for any sufficiently integrable random variable $G$ on $\Omega$.
The compatibility condition \eqref{equ:TC-1} implies further that
\begin{equation}
\label{equ:act-ast}
\begin{split}
  \int G\, d(\mu\at \nu) &=
  \iint G(\omega_{\leq \tau} \at \omega')\,
  \nlt_\omega(d\omega')
  \mu(d\omega)\\ &=
  \iint G(\tomega \at \omega')\,
  \nlt_\omega(d\omega')
  \mlt(d\tomega),
\end{split}
\end{equation}
where $\mu_{\leq \tau}$ is the push forward of $\mu$ via $T_{\tau}$.

\subsubsection{Tail maps} Tail maps on TC-spaces will play an important
role in the dynamic programming principle and will model payoffs associated
to controlled processes.
\begin{definition}
	A measurable map $G$ from a TC-space to a measurable
	space $S$ is called a \define{tail map} if
	$G(\omega\ast_{t} \omega') = G(\omega')$ for all $t\in \Time$,
  all $\omega \in \Omega$ and all $\omega' \in \sC_{\omega,t}$.
  When $S=\R$ ($S=\bar{\R}$), a tail map is called a \define{tail random
  variable} (\define{extended tail random variable}).
\end{definition}
The tail property of random variables extends readily to stopping times in
the following form:
\begin{align*}
  G(\omega\at \omega') = \begin{cases}
  G(\omega'), &  \tau
  (\omega)<\infty \\ G(\omega), &  \tau(\omega)=\infty,
  \end{cases}
\end{align*}
as long as $\omega'$ is compatible with $\omega$ at $\tau$.
Combining this expression with \eqref{equ:act-ast} we obtain the following
equality, valid for each stopping time $\tau$, probability
$\mu\in\prob(\Omega)$, a $\tau$-compatible kernel $\nu\in\kernel(\Omega)$,
and a sufficiently integrable tail random variable $G$:
\begin{equation}
\label{equ:int-cat-G}
\begin{split}
\int G\, d(\mu\at \nu) =
\int \tilde{G}(\omega_{\leq \tau})\, \mu(d\omega),
\end{split}
\end{equation}
	where
\[ \tilde{G}(\omega) = G(\omega) \inds{\tau(\omega)=\infty}+
	\int G(\omega')\, \nu_{\omega}(d\omega')
	 \inds{\tau(\omega)<\infty}.\]

\subsection{Control correspondences} \

A map $f:A \to 2^B$, where $2^B$
denotes the power-set of $B$ is called a
\define{correspondence}
from $A$ to $B$, and is also denoted by $f:A \tto B$.  Its \define{graph}
$\Gamma(f) \subseteq A \times B$ is given by $\Gamma(f) = \sets{(a,b)}{a\in
A, b\in f(a)}$, and its \define{image} by $\image(f)=\cup_{a\in A} f(a)$.
A correspondence is said to be \define{non-empty-valued} if $f(a) \ne
\emptyset$ for all $a\in A$.

\begin{definition}
A non-empty-valued
correspondence $\sP:\Omega \tto \prob(\Omega)$, on a measurable space
$\Omega$ is called a \define{control correspondence}.
\end{definition}

Given a control correspondence $\sP$, a universally measurable random variable
$G$ is said to be \define{$\sP$-upper
semi-integrable}, denoted by $G\in \slmz(\sP)$, if $G^+ \in\slone(\mu)$ for
each $\mu \in \image \sP$.
To each control correspondence $\sP$ and each $G\in
\slmz(\sP)$  we associate the
\define{value function} $v: \Omega \to [-\infty,\infty]$,
given by
\begin{align}
\label{equ:value}
	v(\omega) = \sup_{\mu\in\sP(\omega)}
        \int G\, d\mu.
\end{align}

\subsection{Three key properties}\

There are three key properties that control correspondences must satisfy in
order for our main results to apply. These properties appear in
\cite{Zit14} in a similar terminologial setting, but have been considered and
understood in the literature in diffferent forms 
long before that (see \cite{ElKTan13,NutHan13} for two recent formulations).
We recall that a \define{universally measurable $\sP$-selector} (or,
simply, a \define{$\sP$-selector})
is a (universally measurable) kernel form $\Omega$ to $\prob(\Omega)$ with
the property that $\nu(\omega) \in \sP(\omega)$, for each
$\omega$; the family of all $\sP$-selectors is denoted by $\sS(\sP)$.
\begin{definition}
\label{def:analytic}
A control correspondence $\sP$ on standard Borel space $\Omega$ is called
\begin{enumerate}
\item \define{analytic} if its
graph $\Gamma(\sP)$ is an analytic
subset of the (standard Borel) space $\Omega\times \prob(\Omega)$.
\end{enumerate}
A control correspondence $\sP$ defined on a TC space $\filt$ is said to be
\begin{enumerate}[resume]
\item \define{concatenable}
if for each $\omega
\in \Omega$, $\mu\in \sP(\omega)$,  $\nu\in \sS(\sP)$, and each stopping
time $\tau$, $\nu$ is $\tau$-compatible with $\mu$ and
\begin{align*}
 \mu \at \nu \in \sP(\omega).
\end{align*}
\item \define{disintegrable} if for
each $\omega\in\Omega$, $\mu \in \sP(\omega)$
and a stopping time $\tau$ there exists $\nu \in \sS(\sP)$ such that
$\nu$ is $\mu$-compatible at $\tau$ and
\begin{align*}
  \mu = \mu \at \nu.
\end{align*}
\end{enumerate}
\end{definition}

\begin{remark}
\label{rem:unions}
It follows directly from the definitions of analyticity, concatenability
and disintegrability that the following, useful, implications hold for
any sequence of control correspondences $\seq{\sP}$
on the same Borel space $\Omega$.  Let $\cap_n \sP_n$ and
$\cup_n \sP_n$ be the intersection and the union, defined pointwise, on
$\seq{\sP}$.
\begin{enumerate}
  \item If each $\sP_n$ is analytic, then so are $\cup_n \sP_n$
  and $\cap_n \sP_n$.
  \item If each $\sP_n$ is concatenable, then so is $\cap_n
  \sP_n$.
  \item If each $\sP_n$ is disintegrable, then so is $\cup_n
  \sP_n$.
\end{enumerate}

\end{remark}

We state for completeness the following result which will be used in the
sequel, and the proof of which follows
almost verbatim the argument in \cite[Theorem 2.4, part 1.,
p.~1605]{Zit14}, which, in turn, is a reformulation of the standard
argument available, for example, in \cite{BerShr78}.
We remind the
reader of the convention $+\infty - \eps = 1/\eps$, for $\eps > 0$.
\begin{proposition}[Universal measurability of value functions]
\label{pro:DPP}
Suppose that $\Omega$ is a standard Borel space, $\sP$ an analytic control
correspondence, $G \in \slmz(\sP)$ and
that $v$ is
the associated value function, given by \eqref{equ:value}. Then
$v$ is universally measurable
and for each $\eps>0$ there exists a (universally measurable) selector $\nu^{\eps}\in \sS(\sP)$ such that
\begin{align*}
  v(\omega) -\eps \leq \int G\, d\nu^{\eps}_{\omega},\ \eforall
\omega\in\Omega.
\end{align*}
\end{proposition}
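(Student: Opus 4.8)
The plan is to recast the statement in the language of analytic sets and upper semi-analytic functions and then invoke the two descriptive--set--theoretic workhorses of Bertsekas--Shreve: preservation of upper semi-analyticity under suprema over the sections of an analytic set, and the Jankov--von Neumann measurable selection theorem (see \cite{BerShr78}, and also \cite{Sri98}). Throughout, the hypothesis $G\in\slmz(\sP)$ is what makes the objective legitimate: since $G^+\in\slone(\mu)$ for every $\mu\in\image\sP$, the integral $\int G\, d\mu=\int G^+\, d\mu-\int G^-\, d\mu$ is a well-defined element of $[-\infty,\infty)$, so \eqref{equ:value} is meaningful for every $\omega$.

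The crux is the first step: establishing that the map $\Phi\colon\Omega\times\prob(\Omega)\to[-\infty,\infty)$ given by $\Phi(\omega,\mu)=\int G\, d\mu$ (which does not, in fact, depend on $\omega$) is \emph{upper semi-analytic}, i.e.\ that $\set{(\omega,\mu):\Phi(\omega,\mu)>c}$ is analytic for each $c\in\R$. For bounded Borel $G$ this is exactly the monotone-class measurability fact recorded among our conventions, and a monotone approximation $G^+=\sup_n G^+_n$ by bounded Borel functions, together with the decomposition into $G^+$ and $G^-$, extends it to the integrand at hand, the essential point being that upper semi-analyticity (rather than bare universal measurability) is preserved under the countable suprema, differences against finite integrals, and truncations that appear in the approximation \cite[Ch.~7]{BerShr78}.

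Granting this, since $\sP$ is analytic its graph $\Gamma(\sP)\subseteq\Omega\times\prob(\Omega)$ is an analytic set, and
\[ v(\omega)=\sup_{\mu\in\sP(\omega)}\Phi(\omega,\mu)=\sup\set{\Phi(\omega,\mu):(\omega,\mu)\in\Gamma(\sP)}. \]
I would then apply the theorem that the supremum of an upper semi-analytic function over the sections of an analytic set is again upper semi-analytic \cite[Ch.~7]{BerShr78}. Concretely, $\set{v>c}$ is the projection onto $\Omega$ of the analytic set $\Gamma(\sP)\cap\set{(\omega,\mu):\Phi(\omega,\mu)>c}$, hence analytic. Because every analytic set lies in $\univ(\Omega)$, every upper semi-analytic function is universally measurable; thus $v$ is universally measurable, which is the first assertion.

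Finally, for the $\eps$-optimal selector I would invoke the Jankov--von Neumann selection theorem in its quantitative form for suprema \cite[Ch.~7]{BerShr78}. Since $\sP$ is non-empty-valued we have $\proj_\Omega\Gamma(\sP)=\Omega$, and $\Phi$ is upper semi-analytic on the analytic set $\Gamma(\sP)$; hence for every $\eps>0$ there is a universally measurable selector $\nu^{\eps}$ of $\Gamma(\sP)$ --- that is, $\nu^{\eps}\in\sS(\sP)$ --- satisfying $\int G\, d\nu^{\eps}_{\omega}\geq v(\omega)-\eps$ for all $\omega$, where on the set $\set{v=+\infty}$ the right-hand side is read as $1/\eps$ per the stated convention. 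This is precisely the claimed inequality. The main obstacle, and the only place genuine care is required, is the upper semi-analyticity of $\mu\mapsto\int G\, d\mu$ for the merely universally measurable, extended--real-valued integrand $G$: it is upper semi-analyticity --- not universal measurability, which alone is \emph{not} stable under the projection/supremum step --- that both keeps $v$ universally measurable and renders the selection theorem applicable.
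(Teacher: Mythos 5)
Your proof is correct and is essentially the paper's own: the paper gives no argument of its own but defers to \cite[Theorem 2.4, part 1]{Zit14} and the ``standard argument'' of \cite{BerShr78}, which is precisely the route you take (upper semi-analyticity of $\mu\mapsto\int G\,d\mu$, stability of upper semi-analyticity under suprema over the sections of the analytic graph $\Gamma(\sP)$, and the Jankov--von Neumann selection theorem for the $\eps$-optimal selector). The one caveat --- which affects the paper's formulation as much as your write-up --- is that your monotone approximation by bounded Borel functions yields upper semi-analyticity of $\mu\mapsto\int G\,d\mu$ only when $G$ is Borel (or itself upper semi-analytic), whereas $G\in\slmz(\sP)$ is only assumed universally measurable, for which the integral map is merely universally measurable and the projection step does not go through; you correctly flag this as the delicate point, but the approximation you invoke does not close it for a general universally measurable $G$.
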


\subsection{An abstract version of the dynamic programming principle}\

We are ready to state the most abstract version of the DPP that holds in
our setting. A more directly applicable - and more familiar-looking -  version,
based on the notion of a state map will be given below.
The ideas in the proof are entirely standard. In fact, our setting is
constructed as the most flexible one where this proof can be applied.
We provide the details
for the reader's convenience.
\begin{theorem}[DPP]
\label{thm:DPP}
Let $\sP$ be an analytic control correspondence on a TC
space $\Omega$, $G \in \slmz(\sP)$ a
tail
random variable,
and $v$ the associated value function, given by
\eqref{equ:value}. Then,
\begin{enumerate}
\item If $\sP$ is concatenable, then for each $\omega\in\Omega$ and each
  stopping time $\tau$ we have
   \begin{align}
   \label{equ:DPP-conc}
      v(\omega) \geq \sup_{\mu \in \sP(\omega)}
      \int
      \Big( v\circ \Ttau \inds{\tau<\infty} + G \inds{\tau=\infty} \Big)\, d\mu
   \end{align}
\item If $\sP$ is disintegrable, then for each $\omega\in\Omega$ and each
  stopping time $\tau$ we have
   \begin{align}
   \label{equ:DPP-dist}
      v(\omega) \leq \sup_{\mu \in \sP(\omega)}
      \int  \Big(
      v\circ \Ttau \inds{\tau<\infty} + G \inds{\tau=\infty} \Big)
      \, d\mu
   \end{align}
\end{enumerate}
\end{theorem}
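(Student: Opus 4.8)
The plan is to derive both inequalities from the integration identity \eqref{equ:int-cat-G} for tail random variables, feeding into it the relevant key property -- concatenability for the lower bound, disintegrability for the upper bound -- while the measurable-selector statement of Proposition \ref{pro:DPP} supplies the quantitative control needed to pass sums of indicators under an integral. Before starting I would record three facts used repeatedly: $\Ttau$ is Borel (Proposition \ref{pro:prop-trunc}, part \eqref{ite:comp-stop}), so $v\circ\Ttau$ is universally measurable since $v$ is; $\tau(\omega_{\leq\tau})=\tau(\omega)$ (Proposition \ref{pro:prop-trunc}, part \eqref{ite:tau-omega}), which lets me reduce indicators after substituting the shifted point; and on $\set{\tau=\infty}$ one has $\omega_{\leq\tau}=\omega$ by the convention $T_{\infty}=\Id$. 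Since $G\in\slmz(\sP)$, all the integrals below are well-defined in $[-\infty,\infty)$.

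For the lower bound \eqref{equ:DPP-conc}, fix $\omega$, $\mu\in\sP(\omega)$, and $\tau$. Given $\eps>0$, I would take the \emph{uniformly} $\eps$-optimal selector $\nu^{\eps}\in\sS(\sP)$ of Proposition \ref{pro:DPP}, so that $\int G\, d\nu^{\eps}_{\omega'}\geq v(\omega')-\eps$ for every $\omega'$. Concatenability yields $\mu\at\nu^{\eps}\in\sP(\omega)$, whence $v(\omega)\geq\int G\, d(\mu\at\nu^{\eps})$, and since $\mu\at\nu^{\eps}\in\image\sP$ the tail variable $G$ is integrable enough for \eqref{equ:int-cat-G} to apply. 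I would then evaluate the resulting integrand $\tilde{G}(\omega_{\leq\tau})$ using $\tau(\omega_{\leq\tau})=\tau(\omega)$: it equals $G(\omega)$ on $\set{\tau=\infty}$, and equals $\int G\, d\nu^{\eps}_{\omega_{\leq\tau}}\geq v(\omega_{\leq\tau})-\eps=(v\circ\Ttau)(\omega)-\eps$ on $\set{\tau<\infty}$. Integrating this pointwise bound against $\mu$ gives
\begin{align*}
  v(\omega)\ \geq\ \int\Big((v\circ\Ttau-\eps)\inds{\tau<\infty}+G\inds{\tau=\infty}\Big)\, d\mu,
\end{align*}
and letting $\eps\downarrow0$ and taking the supremum over $\mu\in\sP(\omega)$ produces \eqref{equ:DPP-conc}; the convention $+\infty-\eps=1/\eps$ is precisely what keeps this valid where $v\circ\Ttau=+\infty$ on a set of positive $\mu$-mass.

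For the upper bound \eqref{equ:DPP-dist}, fix $\omega$ and $\tau$ and let $\mu\in\sP(\omega)$ be arbitrary. Disintegrability furnishes a selector $\nu\in\sS(\sP)$, $\mu$-compatible at $\tau$, with $\mu=\mu\at\nu$. Applying \eqref{equ:int-cat-G} to $\int G\, d\mu=\int G\, d(\mu\at\nu)$ and again reducing via $\tau(\omega_{\leq\tau})=\tau(\omega)$, I would bound $\tilde{G}(\omega_{\leq\tau})$ from above: on $\set{\tau<\infty}$ it is $\int G\, d\nu_{\omega_{\leq\tau}}\leq v(\omega_{\leq\tau})=(v\circ\Ttau)(\omega)$, because $\nu_{\omega_{\leq\tau}}\in\sP(\omega_{\leq\tau})$ by the selector property and the definition of $v$ as a supremum; on $\set{\tau=\infty}$ it is $G(\omega)$. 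Integrating gives $\int G\, d\mu\leq\int\big(v\circ\Ttau\inds{\tau<\infty}+G\inds{\tau=\infty}\big)\, d\mu$, and taking the supremum over $\mu\in\sP(\omega)$ on the left (and bounding the right by its own supremum) yields \eqref{equ:DPP-dist}.

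The computations are routine and the two halves are genuinely dual; the real care is in bookkeeping. One must check that $v\circ\Ttau\inds{\tau<\infty}+G\inds{\tau=\infty}$ is universally measurable and upper semi-integrable so that every integral makes sense, which follows from universal measurability of $v$ and Borel measurability of $\Ttau$. The hard point -- the one I expect to be the crux -- is the lower bound's reliance on the \emph{uniformity in $\omega$} of the near-optimal selector: the inequality $\int G\, d\nu^{\eps}_{\omega'}\geq v(\omega')-\eps$ must hold simultaneously at all points because it is invoked at the data-dependent shifted point $\omega_{\leq\tau}$ sitting inside the outer $\mu$-integral. Without the uniform selector of Proposition \ref{pro:DPP}, this transfer of near-optimality across the truncation would break down, and it is exactly here that the analyticity hypothesis (through that proposition) does its work.
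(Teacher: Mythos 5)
Your proposal is correct and follows essentially the same route as the paper: concatenate $\mu$ with the uniformly $\eps$-optimal selector from Proposition \ref{pro:DPP} for the lower bound, and disintegrate an (arbitrary or near-optimal) $\mu$ via a selector for the upper bound, in both cases reducing the iterated integral through the identity \eqref{equ:int-cat-G} and the relation $\tau(\omega_{\leq\tau})=\tau(\omega)$. Your closing remark correctly identifies the uniformity of the $\eps$-optimal selector (hence analyticity) as the crux of the concatenability half.
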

\begin{proof}
 Suppose, first, that $\sP$ is concatenable and pick $\omega \in \Omega$,
 $\mu \in \sP(\omega)$ and a stopping time $\tau$. Given $\eps>0$,
 Proposition \ref{pro:DPP} guarantees the existence of an $\eps$-optimizing
 selector $\nu^{\eps}$, i.e., such that $v^{\eps}(\omega):=\int G\, d\nu^{\eps}_{\omega} \geq
 v(\omega) - \eps$, for each $\omega \in \Omega$.
 We construct the measure $\mu'$ by concatenating $\mu$ and $\nu^{\eps}$ at
 $\tau$. The assumption of concatenability implies that they are
 compatible and that $\mu' \in \sP(\omega)$. Therefore,
  \begin{align*}
     v(\omega)  &\geq \int G\, d\mu' = \int G\, d (\mu \at \nu^{\eps}) =
    \iint G(\omega \at \omega')\, \nelt_\omega(d\omega')\,
    \mu(d\omega)\\
    &=
    \iint \Big(G(\omega) \inds{\tau(\omega)=\infty} +
    G(\omega')\inds{\tau(\omega)<\infty}\Big) \,
    \nelt_\omega(d\omega')\,  \mu(d\omega) \\
    &\geq \int \Big( G(\omega) \inds{\tau(\omega)=\infty} +
    (v(\omega_{\leq \tau})-\eps)
    \inds{\tau(\omega)<\infty}\Big) \, \mu(d\omega),
  \end{align*}
  which implies \eqref{equ:DPP-conc}.

  In the disintegrable case, we pick $\eps>0$, $\omega\in\Omega$,
  $\tau\in\Stop$ and choose $\mu^{\eps} \in \sP(\omega)$ such that
  $v(\omega) -\eps \leq \int G\, d\mu^{\eps}$. By disintegrability, we can
  write $\mu^{\eps} = \mu^{\eps} \at \nu$ for some $\nu \in \sS(\sP)$, and so \begin{align*}
    v(\omega) - \eps &\leq
     \int G\, d(\mu^{\eps}
    \at \nu) = \int \left( G(\omega) \inds{\tau=\infty} + \inds{\tau<\infty}\Big(
    \int G(\omega') \nlt_\omega (d\omega') \Big)\right) \,
    \mu(d\omega)\\
    &\leq \int \Big( G(\omega) \inds{\tau=\infty} + v(\omega_{\leq \tau})
    \inds{\tau<\infty}\Big) \,\mu(d\omega). \qedhere
  \end{align*}
\end{proof}

\subsubsection{State maps and factoring}
We remind the reader that, as defined in subsection \ref{sss:state}, a
state map $X:\Omega\to E$ is simply a measurable map from a T-space to a
Polish space $E$, and that $X_{\tau}$ is a shortcut for $X \circ T_{\tau}$,
for $\tau\in\Stop$. Just like (concatenation) compatibility may factor through
$X$, so can a control correspondence:
\begin{definition}
\label{equ:state-map}
A control correspondence $\sP$ on $\Omega$ is said to \define{factor through}
a state map $X$ if
there exists a correspondence $\bar{\sP}:E \tto \prob(\Omega)$ such that
$\sP (\omega) = \bar{\sP}( X(\omega) ) \subseteq \prob(\Omega)$, i.e., the following diagram commutes:
\begin{equation}
\label{equ:CD-state}
 \begin{tikzcd}
\Omega \arrow[r, "X"] \arrow[d,twoheadrightarrow, "\sP"]
& E \arrow[dl, twoheadrightarrow,  "\bar{\sP}"] \\
\prob(\Omega)
&
\end{tikzcd}
\end{equation}
\end{definition}
A very simple, but important, consequence of the existence of a state map
through which the
control correspondence $\sP$ factors is that in that case,
$v$ factors through it as well. Indeed, the function
$\bar{v}:E \to [-\infty,\infty]$, given by
$\bar{v}(x) = \sup_{ \mu \in \bar{\sP}(x)} \int G\, d\mu$, then has the
property that
$\bar{v}(X(\omega)) = v(\omega)$ and, under the conditions of Theorem
\ref{thm:DPP}, satisfies
\begin{align*}
	\bar{v}(x)
	\leq (\geq) \sup_{\mu\in\sP(x)} \int  \Big(
        \bar{v}(X_{\tau}) \inds{\tau<\infty} + G \inds{\tau=\infty} \Big)\,
        d\mu
\end{align*}
for all $x \in \image X$, and all stopping times $\tau\in \Stop$.

\section{Martingale-generated control correspondences}
\label{sec:mart-gen}
Our next task is so take the abstraction level down a notch and study a
class of control correspondences defined via a family of martingale conditions.
These correspondences generalize the standard martingale formulation in the
theory of stochastic optimal control  and are defined via a family of
structure-preserving maps into the model space space $\dr^0$ of $\R$-valued
\cd paths $x:\Time\to\R$ with $x(0)=0$.

\subsection{Canonical local martingale measures}\
\label{sss:local-mart-meas}

With the $T$-space structure of $\dr$ described in subsection
\ref{sss:exam-T},
each non-anticipating map $F$ from a T-space $\filt$ into $\dr$ induces a sequence
$\set{F^n}$ of non-anticipating maps
\begin{align}
\label{equ:Fn}
F^n_t  =
F_{\tau_n^F\wedge t} \ewhere
  \tau_n^F(\omega) = \inf\{t\geq 0: |F_t(\omega)| \geq n \} \wedge n.
\end{align}
When the choice of $F$ is evident from context,
we may drop the superscript and write $\tau_n=\tau^F_n$.
\begin{definition}
A probability measure $\mu \in \prob(\Omega)$ is said to be a
\define{canonical local-martingale probability for $F$} if
the stochastic process $\prst{F^n_t(\cdot)}$
is a martingale under ($\mu,\FFF$) for each $n\in\N$.
The set of all canonical local martingale probabilities for $F$ is denoted by
$\sM^{F,loc}$.
\end{definition}
\begin{remark}
\label{rem:1252}
The notion of a \emph{canonical} local martingale differs from the standard
notion of a local martingale in that
it requires that the reducing sequence takes a particular form, namely that
of the sequence of space-time
exit times.  This requirement is nontrivial, as it is known that there are
local martingales that cannot be reduced by this particular sequence (see
\cite[Lemme 2.1., p.~57]{Str77}). On
the other hand, this notion suffices for many applications; indeed
for continuous processes (or processes with jumps bounded from below) the notions of a
  canonical local martingale and that of a local martingale coincide.
\end{remark}
With the notion of a canonical local martingale probability under our belt,
we can define a large class of control correspondences. Housed on T-spaces, they
need two ingredients to be specified: 1) a family of $\sD$ of
non-anticipating maps from $\Omega \to \dr$, and 2) a state map $X$ from
$\Omega$ to a Polish space $E$. Once these are specified,
for $x\in E$ we define
\begin{align}
\label{equ:spx}
   \bsP(x) = \bigcap_{F\in \sD} \sM^{F,loc} \cap \Bsets{ \mu \in \prob(\Omega)}{
     X_0=x,\text{ $\mu$-a.s.}},
\end{align}
where, as usual, $X_0$ is the shortcut for $X\circ T_0$.
The  $(\sD,X)$-\define{generated control correspondence} $\sP=\sP(\sD,X):
\Omega \tto \prob(\Omega)$ is then defined by
\[ \sP(\omega) = \bsP(X(\omega)) \efor \omega\in\Omega,\]
so that it naturally factors through $X$.
\subsection{Sufficient conditions for analyticity}\

The ubiquitous Polish-space structure woven
into all the ingredients of our setup makes it possible to give widely met
sufficient conditions on the family $\sD$ such that the resulting
$(\sD,X)$-correspondence becomes analytic. The countability condition we
impose on $\sD$ is not the
weakest possible, but since it holds in most relevant examples, we only
comment on some possible routes towards establishing weaker versions
in Remark \ref{rem:weak} below.
\begin{proposition}
\label{pro:analy-mart}
Let $\sD$ be a countable family
of non-anticipating maps from a T-space
$\Omega$ to $\dr$ and let $X:\Omega\to E$ be a state map.
Then the $(\sD,X)$-generated control correspondence $\sP$ is analytic.
\end{proposition}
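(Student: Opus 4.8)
The plan is to exploit the factorization $\sP=\bsP\circ X$ and to show that the graph $\Gamma(\bsP)$ is Borel (hence analytic) in $E\times\prob(\Omega)$; since $(\omega,\mu)\mapsto(X(\omega),\mu)$ is Borel, pulling back $\Gamma(\bsP)$ along it shows that $\Gamma(\sP)$ is analytic. Reading off \eqref{equ:spx}, the graph $\Gamma(\bsP)$ is the intersection of two pieces: the $x$-independent \emph{martingale part} $E\times\bigcap_{F\in\sD}\sM^{F,loc}$ and the \emph{initial-condition part} $\sets{(x,\mu)}{\mu\circ X_0^{-1}=\delta_x}$. Because $\sD$ is countable, it suffices to prove that each $\sM^{F,loc}$ is Borel in $\prob(\Omega)$ and that the initial-condition part is Borel; the conclusion then follows by taking a countable intersection (or, if one prefers to argue at the level of correspondences, by invoking Remark \ref{rem:unions}(1)).

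The initial-condition part is the easier of the two. The map $(x,\mu)\mapsto (x,\mu\circ X_0^{-1})$ from $E\times\prob(\Omega)$ into $E\times\prob(E)$ is Borel, since $X_0=X\circ T_0$ is Borel and push-forward along a Borel map is a Borel operation on $\prob$. The condition $\mu\circ X_0^{-1}=\delta_x$ cuts out the set $\sets{(x,\rho)\in E\times\prob(E)}{\rho=\delta_x}$, which is the graph of the continuous embedding $x\mapsto\delta_x$ and hence closed, so its preimage is Borel.

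The martingale part is the crux. Fix $F\in\sD$; by definition $\sM^{F,loc}=\bigcap_{n}\sets{\mu}{F^n\text{ is a }(\mu,\FFF)\text{-martingale}}$, so it is enough to show each set in this intersection is Borel, and to do so I would reduce the a priori uncountable martingale requirement to countably many Borel constraints on $\mu$. First, the integrability constraints $\int|F^n_q|\,d\mu<\infty$, ranged over rational $q$, are Borel, because the parametric-integral fact recalled in the introduction makes $\mu\mapsto\int|F^n_q|\,d\mu$ a Borel $[0,\infty]$-valued map. Second, since $\sF_s=\sigma(T_s)$, a fixed countable $\pi$-system $\set{B_j}$ generating $\Borel(\Omega)$ (which exists as $\Omega$ is second countable) gives sets $T_s^{-1}(B_j)$ that generate $\sF_s$, and by a Dynkin-system argument the identity $\EE^\mu[F^n_t\mid\sF_s]=F^n_s$ is equivalent to $\int (F^n_t-F^n_s)\,\ind{T_s^{-1}(B_j)}\,d\mu=0$ for all $j$. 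On the Borel set where the relevant integrability holds, each map $\mu\mapsto\int(F^n_t-F^n_s)\,\ind{T_s^{-1}(B_j)}\,d\mu$ is Borel (split the integrand into positive and negative parts and reapply the parametric-integral fact), so each identity defines a Borel set. Letting $s\le t$ range over the rationals and $j$ over $\N$ produces countably many Borel conditions whose intersection is exactly $\sets{\mu}{F^n\text{ is a martingale}}$; the passage from rational to arbitrary times uses that $F^n$ is \cd, that a reverse martingale along $t_k\downarrow t$ is uniformly integrable, and the tower property to descend from $\sF_{s+}$ to $\sF_s$ (so that right-continuity of the filtration is not needed).

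The main obstacle is precisely the integrability bookkeeping in the martingale part. Because $F^n$ is stopped at the space-time exit time $\tau_n$, it is bounded by $n$ only strictly before $\tau_n$ and may jump to an arbitrarily large value at $\tau_n$; thus $F^n$ need not be uniformly bounded, and $F^n_t\in\slone(\mu)$ is a genuine restriction on $\mu$ that must be carried as its own Borel condition before the mean-zero identities can even be integrated. Once the integrability sets and the countably many identities are assembled as above, each $\sM^{F,loc}$ is Borel, the countable intersections over $n$ and over $F\in\sD$ remain Borel, and intersecting with the Borel initial-condition part shows $\Gamma(\bsP)$ — and therefore $\Gamma(\sP)$ — is Borel, a fortiori analytic.
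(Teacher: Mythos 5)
Your proposal is correct and follows essentially the same route as the paper: the paper's Lemma \ref{lem:F-mart} is exactly your reduction of the canonical local-martingale property to countably many Borel conditions (integrability of $F^n_q$ plus mean-zero identities against a countable generating $\pi$-system for $\sF_q$, extended from rational to all times via right-continuity and backward martingale convergence), and the graph is then the same intersection of Borel sets with the initial-condition part. The only cosmetic difference is that you work with $\Gamma(\bsP)$ in $E\times\prob(\Omega)$ and pull back along $X$, whereas the paper writes $\Gamma(\sP)$ directly in $\Omega\times\prob(\Omega)$; your explicit flagging of the integrability bookkeeping for the possibly unbounded jump of $F^n$ at $\tau_n$ is a point the paper also makes, just more tersely.
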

The proof is based on a modification of
\cite[Lemma 3.6, p.~1611]{Zit14}, where
\begin{align*}
  \Qstop = \Bsets{ q \ind{A} + r \ind{A^c} }{ q\leq r\in\Qtime, A\in\Pi_q }
\end{align*}
with $\Qtime$ denoting a countable dense set in $\Time$,
and $\{\Pi_q\}_{q\in\Qtime}$ a collection of
countable $\pi$-systems such that $\sigma(\Pi_q)=\sF_q$ for all $q\in\Qtime$.
The exact choice of $\Qtime$ or $\{\Pi_q\}_{q\in\Qtime}$ is unimportant, as long as it is
fixed throughout.
\begin{lemma}
\label{lem:F-mart}
For each non-anticipating map $F$, we have
\begin{align}
\label{equ:mart-count}
   \sM^{F,loc} = \bigcap
   \Bsets{
     \mu\in\prob(\Omega) }{ F^n_q, F^n_r\in\lone(\mu) \eand
     \EE^{\mu}[F^n_r \ind{A}] = \EE^{\mu}[ F^n_q
     \ind{A} ] }
\end{align}
where the intersection is taken over all
$n\in\N$,
   $q<r \in \Qtime$ and $A\in \Pi_q$.
\end{lemma}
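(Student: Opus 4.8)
The plan is to prove the two inclusions separately, the easy one being ``$\subseteq$''. If $\mu\in\sM^{F,loc}$ then, by definition, each stopped process $\prst{F^n_t}$ is a $(\mu,\FFF)$-martingale; in particular $F^n_q,F^n_r\in\lone(\mu)$ for all times, and the martingale property gives $\EE^{\mu}[F^n_r\ind{A}]=\EE^{\mu}[F^n_q\ind{A}]$ for every $A\in\sF_q$, hence for every $A\in\Pi_q\subseteq\sF_q$. Thus $\mu$ lies in the right-hand intersection, with no work required.

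For ``$\supseteq$'', suppose $\mu$ satisfies all the listed countable conditions; I must show that $\prst{F^n_t}$ is a $(\mu,\FFF)$-martingale for each fixed $n$. The observation that drives everything is that $\tau^F_n\le n$ by construction, so for any rational $r>n$ we have $\tau_n\wedge r=\tau_n$ and therefore $F^n_r=F_{\tau_n}$; the hypothesis $F^n_r\in\lone(\mu)$ then yields $F_{\tau_n}\in\lone(\mu)$. Since $|F^n_u|=|F_u|<n$ on $\set{u<\tau_n}$ and $|F^n_u|=|F_{\tau_n}|$ on $\set{u\ge\tau_n}$, we obtain the pointwise bound $|F^n_u|\le n+|F_{\tau_n}|$ for every $u\in\Time$, with the right-hand side in $\lone(\mu)$. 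This simultaneously upgrades integrability from the rational grid to all times and supplies the $\lone(\mu)$ dominating function, uniform in $u$, that is used in the limiting arguments below.

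Next I would promote the martingale identity from the generating $\pi$-system to the full $\sigma$-algebra, and then from rational to real times. For fixed $n$ and rationals $q<r$, the family $\sets{A\in\sF_q}{\EE^{\mu}[F^n_r\ind{A}]=\EE^{\mu}[F^n_q\ind{A}]}$ is a $\lambda$-system (closure under proper differences and increasing limits uses $F^n_q,F^n_r\in\lone(\mu)$ together with dominated convergence) containing the $\pi$-system $\Pi_q$; since $\sigma(\Pi_q)=\sF_q$, Dynkin's theorem yields the identity for all $A\in\sF_q$, where one uses that $\Omega\in\Pi_q$, so that the total-mass case $A=\Omega$ is covered (harmless, since any generating $\pi$-system may be enlarged by $\Omega$). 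Now fix real $s\le t$ and $A\in\sF_s$, and pick rationals $q_k\downarrow s$, $r_k\downarrow t$ with $s<q_k<r_k$. Then $A\in\sF_s\subseteq\sF_{q_k}$ by monotonicity of the filtration, so the previous step gives $\EE^{\mu}[F^n_{r_k}\ind{A}]=\EE^{\mu}[F^n_{q_k}\ind{A}]$. Letting $k\to\infty$ and invoking right-continuity of the \cd path $u\mapsto F^n_u$ together with the domination from the previous paragraph, dominated convergence produces $\EE^{\mu}[F^n_t\ind{A}]=\EE^{\mu}[F^n_s\ind{A}]$. Since $A\in\sF_s$ was arbitrary and $F^n_s$ is $\sF_s$-measurable (by non-anticipation of $F$), this is precisely $\EE^{\mu}[F^n_t\mid\sF_s]=F^n_s$, so $\prst{F^n_t}$ is a martingale and $\mu\in\sM^{F,loc}$.

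I expect the main obstacle to be the integrability bookkeeping rather than any conceptual difficulty: because $F$ is merely \cd, the stopped process $F^n$ may jump past level $n$ at $\tau_n$ and is therefore \emph{not} bounded, so one cannot simply invoke boundedness to get integrability at every time. The crux is the remark that $\tau_n\le n$ freezes $F^n$ at the integrable variable $F_{\tau_n}$ beyond time $n$, which both secures integrability at all (not merely rational) times and furnishes one time-uniform $\lone(\mu)$ dominating function. Once that is in place, the Dynkin extension in the space variable and the right-continuous regularization in the time variable are routine.
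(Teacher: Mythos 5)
Your proof is correct, and its skeleton matches the paper's: the inclusion $\sM^{F,loc}\subseteq\cdots$ is immediate from optional sampling at deterministic times, the $\pi$-$\ld$ theorem upgrades the tested identities to all of $\sF_q$ for rational $q$, and right-continuity of the stopped path carries the martingale property from $\Qtime$ to all of $\Time$. Where you diverge is in how the last limit interchange is justified. The paper first establishes the martingale property along $\Qtime$ and then invokes the backward martingale convergence theorem (to identify $\EE^{\mu}[F^n_r\mid\sF_{s+}]$ with $F^n_s$ and replace $\sF_{s+}$ by $\sF_s$ via non-anticipation), followed by an approximation $r_m\downarrow t$ in the integrand, implicitly relying on uniform integrability of the conditional expectations. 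You instead observe that $\tau_n\leq n$ forces $F^n_r=F_{\tau_n}$ for every rational $r>n$, so the hypothesis hands you $F_{\tau_n}\in\lone(\mu)$ and the time-uniform dominating function $n+\abs{F_{\tau_n}}$; a single application of dominated convergence then handles both endpoints at once. This is more elementary and self-contained (no martingale convergence theory needed), and it makes explicit an integrability point the paper leaves tacit --- namely that $F^n$ need not be bounded because of the possible overshoot at $\tau_n$, yet is dominated by an integrable variable. Your remark that one should take $\Omega\in\Pi_q$ (or enlarge $\Pi_q$ by $\Omega$, which is harmless and consistent with the paper's declaration that the choice of $\Pi_q$ is immaterial) so that the collection $\sets{A}{\EE^{\mu}[F^n_r\ind{A}]=\EE^{\mu}[F^n_q\ind{A}]}$ is genuinely a $\ld$-system is a point of care the paper glosses over; without it the total-mass identity $\EE^{\mu}[F^n_r]=\EE^{\mu}[F^n_q]$ is not among the hypotheses. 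Both arguments are sound; yours trades the probabilistic machinery for explicit domination.
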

\begin{proof}
The inclusion $\sM^{F,loc} \subseteq \dots$ is straightforward. Conversely,
let $\mu \in \prob(\Omega)$ be an element of the right-hand side of
\eqref{equ:mart-count}.
We first show that $\mu\in\sM^{F^n}_{\Qtime}$, where
$\sM^{F^n}_{\Qtime}$
denotes the set
of all $\mu\in\prob(\Omega)$ with the property
that $\fml{F^n_t}{t\in\Qtime}$ is a $\mu$-martingale
with respect to
$\fml{\sF_t}{t\in\Qtime}$.
That is an immediate consequence of the equalities of expectations under
$\mu$ on the right-hand-side of \eqref{equ:mart-count}. Considered over all
$A\in \Pi_q$, with $q<r\in\Qtime$, they amount to
    $\EE^{\mu}[ F^n_r | \sF_q] = F^n_q$, a.s., by $\pi$-$\ld$-theorem.

It remains to argue that $F^n$ is a $\mu$-martingale on entire $\Time$. Assuming, without
loss of generality, that $\Time=[0,\infty)$,
we start by picking
$s\in \Time \setminus \Qtime$
and $r\in \Qtime$ with $r>s$.
The backward martingale convergence theorem implies that
\[ \EE^{\mu}[ F^n_r| \sF_{s+}] = F^n_s, \text{ $\mu$-a.s.}\]
Since $F^n$ is non-anticipating,
$F^n_s$ is $\sF_s$-measurable and we may replace $\sF_{s+}$ by $\sF_{s}$
in the equality above.
Finally, for $t \in \Time$ with $t>s$,  we approximate $F^n_t$ by a sequence
$\sqm{F^n_{r_m}}$ with
$r_m\downto t$ and $r_m\in \Qtime$, to conclude that
$F^n$ is, indeed, a martingale under $\mu$.
\end{proof}
\begin{proof}[Proof of Proposition \ref{pro:analy-mart}]
For each $r\in\Time$,
the coordinate maps are Borel measurable on $\dr$
and, so,
$\mu\mapsto \EE^{\mu}[ F_r \ind{A}]$ is Borel on $\Omega$. It is easy to
see that the family of probability measures under which a given real-valued Borel map is
integrable is also a Borel set, so it follows that
$\sM^{F,loc}$ is Borel for each $F$. The countability of $\sD$ guarantees
that $\cap_{F\in \sD} \sM^{F,loc}$, as well. Finally, the graph of $\sP$ is
analytic (in fact Borel) as it is given as an
intersection of Borel sets
\[
\Gamma(\sP) = \Bsets{ (\omega,\mu) }{ \mu\Big(X_0 = X_0(\omega)
\Big)=1 }
\cap \left( \Omega \times
\bigcap_{F\in\sD}\sM^F \right). \qedhere\]
\end{proof}
\begin{remark}
\label{rem:weak}
When $\sD$ is not countable, the set $\cap_{F\in \sD} \sM^{F,loc}$ is not
necessarily Borel measurable (or even analytic) in general. The situation is somewhat more pleasant when $\sD$ admits a
structure of a Borel space with the property that the maps
\[ \sD \ni F \mapsto \EE^{\mu}[F_r], r\in \Time,\]
are measurable for each probability measure $\mu\in\prob(\Omega)$. In that
case, the intersection $\cap_{F\in\sD} \sM^{F,loc}$ can be represented as a
\emph{co-projection}
\[ \sets{\mu\in\prob(\Omega)}{ \forall\, F \in \sD,\ (F,\mu)\in \sM} \]
of the Borel set
$\sM =\sets{ (F,\mu) \in \sD \times \prob(\Omega) }{ \mu \in \sM^{F,loc} }$.
Unlike projections, the images of co-projections are co-analytic, but not
necessarily analytic sets.
Not everything is lost, however,
as we usually know a great deal more about the set
$\sM$, other than the fact that it is a Borel set. Indeed, the countable
case of Proposition \eqref{pro:analy-mart} corresponds to the
measurable-selection theorem of Lusin for sets with countable sections (see
\cite[Theorem 5.7.2, p.~205]{Sri98}). On the other side of the spectrum are
measurable-selection theorems with large sections (see Section 5.8 in
\cite{Sri98}), which can be used for certain uncountable $\sD$.
\end{remark}

\subsection{Sufficient conditions for concatenability}\
\label{sse:suff-con}
Having discussed analyticity, we turn to the second major assumption of our
abstract DPP theorem, namely concatenability. It is not hard to see that
without additional requirements on $\sD$, no $(\sD,X)$-generated
control correspondence should be expected to be concatenable. A natural
requirement, as we will see below, is that the maps $F$ be  TC-morphisms,
introduced in Definition \ref{def:TC-morph} above.
Moreover, the target space for these
TC-morphisms will be $\dr^0$ - a model space for (the laws of) local
martingales.
We remind the reader
(see section \ref{sss:exam-TC} above) that $\dr$ comes with two different
natural concatenations, namely the strict one ($\bullet$) and the
adjusted one ($\star$).
We will only work with the adjusted one in this
section, but, in order to avoid any confusion, we
will write $(\dr,\star)$ and $(\dr^0,\star)$ throughout.

\begin{definition}
A map $F:\Omega \to \dr$ is said to be \define{canonically locally bounded}
if there exists a sequence $\seq{M}$ of positive constants so that
 \begin{align}
 \label{equ:Mn}
   \abs{F^n(\omega)_t} \leq M_n \eforall \omega\in\Omega, t\in \Time.
 \end{align}
A simple sufficient condition for canonical local boundedness is that the
jumps of $F$ (when seen as a stochastic process on $\Omega$) are uniformly bounded.
\end{definition}

\begin{proposition}
\label{pro:concat-mart}
Let $\sD$ be a family of canonically locally bounded TC-morphisms into
$(\dr^0,\star)$, and let $X$ be a state map.
Then the $(\sD,X)$-generated control correspondence $\sP$ is closed under concatenation.
\end{proposition}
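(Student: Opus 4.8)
The plan is to unwind the definition of concatenability (Definition \ref{def:analytic}) and verify its two requirements — $\tau$-compatibility of $\nu$ with $\mu$, and membership $\mu\at\nu\in\sP(\omega_0)$ — for arbitrary $\omega_0\in\Omega$, $\mu\in\sP(\omega_0)$, $\nu\in\sS(\sP)$ and $\tau\in\Stop$. Write $x=X(\omega_0)$, so $\mu\in\bsP(x)$. Because $\sP$ factors through $X$, the restricted kernel satisfies $\nlt_\omega=\nu_{\omega_{\leq\tau}}\in\bsP(X_\tau(\omega))$, so $\nlt_\omega(X_0=X_\tau(\omega))=1$; compatibility is then immediate from the sufficient condition \eqref{equ:crit-comp}, whose hypothesis (that $\ast$ factors through $X$) holds in the present setting. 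For membership it suffices, in view of \eqref{equ:spx}, to check that $X_0=x$ holds $(\mu\at\nu)$-a.s.\ and that $\mu\at\nu\in\sM^{F,loc}$ for every $F\in\sD$. The first is routine: by \eqref{equ:TC-2} the prefix $(\omega\at\omega')_{\leq 0}$ equals $\omega_{\leq 0}$, so $X_0(\omega\at\omega')=X_0(\omega)$, and since $X_0=x$ holds $\mu$-a.s.\ the formula \eqref{equ:act-ast} transports this to $\mu\at\nu$.

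The heart of the argument is the canonical local-martingale condition. Fix $F\in\sD$ and $n\in\N$; I must show $F^n$ is a $(\mu\at\nu)$-martingale. Integrability is free: canonical local boundedness \eqref{equ:Mn} gives $|F^n|\leq M_n$, so $F^n_t\in\lone(\mu\at\nu)$ for all $t$. For the martingale identity I would split $F^n$ along $\tau$, writing $F^n_t=F^n_{t\wedge\tau}+\big(F^n_t-F^n_{t\wedge\tau}\big)$, and show each summand is a $(\mu\at\nu)$-martingale. Since $\mu\at\nu$ agrees with $\mu$ on $\sF_\tau$ — by \eqref{equ:act-ast} applied to $T_\tau$-invariant integrands, using that stopping times satisfy $\tau(\omega\at\omega')=\tau(\omega)$ — and since $F^n_{t\wedge\tau}(\omega\at\omega')=F^n_{t\wedge\tau}(\omega)$ because the two paths coincide up to $\tau$ (so the level-$n$ stopping agrees there too), the first summand is a martingale by optional stopping applied to the bounded $\mu$-martingale $F^n$; this is where $\mu\in\sM^{F,loc}$ enters.

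The second summand is where the TC-morphism hypothesis is used. Conditioning on $\sF_\tau$, the post-$\tau$ evolution under $\mu\at\nu$ is governed by $\nlt_\omega$, and the defining identity of a TC-morphism into $(\dr^0,\star)$ (Definition \ref{def:TC-morph}) together with the explicit form \eqref{equ:adj-concat} of the adjusted concatenation gives, for $s>\tau(\omega)$, the relation $F_s(\omega\at\omega')=F_\tau(\omega)+F_{s-\tau}(\omega')$. Thus after $\tau$ the increments of $F$ along $\omega\at\omega'$ are exactly increments of $F$ along $\omega'$ (the additive constant $F_\tau(\omega)$ cancels in differences), and $\nu_{\omega'}\in\sM^{F,loc}$ turns these into martingale increments under $\nlt_\omega$; integrating back in $\omega$ via \eqref{equ:act-ast} makes the second summand a $(\mu\at\nu)$-martingale as well. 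Combining the two summands yields $\mu\at\nu\in\sM^{F,loc}$. One could equally phrase the increment check through Lemma \ref{lem:F-mart}, reducing it to rational times $q<r$ and sets $A\in\Pi_q$.

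The main obstacle I anticipate is the interaction between the level-$n$ localization $\tau_n^F$ of \eqref{equ:Fn} and the concatenation. Although the constant $F_\tau(\omega)$ does not change the \emph{values} of the post-$\tau$ increments, it shifts the \emph{level} at which the concatenated path is frozen: the first time $|F(\omega\at\omega')|$ reaches $n$ is governed by the translated path $F_\tau(\omega)+F_{\cdot}(\omega')$, so $F^n(\omega\at\omega')$ is genuinely not the $\star$-concatenation of $F^n(\omega)$ and $F^n(\omega')$, and one cannot invoke ``$F^n$ is a $\nu$-martingale'' directly. To handle this I would work on the event $\{\tau<\tau_n^F(\omega\at\omega')\}$, off which the post-$\tau$ increment vanishes; there $|F_\tau(\omega)|<n$, so the relevant freezing time $\rho$ on the $\omega'$-side satisfies $\rho\leq\tau_{2n}^F(\omega')$. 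Since $\sM^{F,loc}$ supplies \emph{all} localized martingales $F^m$, optional stopping of the bounded $\nlt_\omega$-martingale $F^{2n}$ at $\rho$ shows that the frozen post-$\tau$ increment is a genuine $\nlt_\omega$-martingale increment, which is exactly what the preceding paragraph requires.
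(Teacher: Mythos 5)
Your proposal is correct and follows essentially the same route as the paper's proof: the same reduction of compatibility and the initial condition via factoring through $X$, the same split of the martingale test at the concatenation time (the paper packages this as Lemma \ref{lem:mart-char}, testing $F^n_{\sigma\wedge\kappa}-F^n_\kappa$ and $F^n_{\sigma\vee\kappa}-F^n_\kappa$ over $\sigma\in\Qstop$, which neatly absorbs the conditioning sets into two-valued stopping times), the same use of the TC-morphism increment identity (Lemma \ref{lem:increment}) for the post-$\tau$ part, and the same fix for the localization mismatch by passing to a higher level $F^{2n}$ bounded by the constants of \eqref{equ:Mn}. The one ingredient you gloss over is that the translated post-$\tau$ time on the $\omega'$-side must be shown to be a genuine stopping time for the $\omega'$-filtration before optional stopping under $\nlt_\omega$ can be invoked; the paper devotes Lemma \ref{lem:tau-split} to exactly this verification.
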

The proof is based on the several lemmas.
We omit the straightforward proof of the first one.
\begin{lemma}
\label{lem:increment}
Suppose that $F$ is a $TC$-morphism into $(\dr,\star)$.
For all stopping times $\kappa$ we have
\begin{align*}
  F_{\kappa+t}(\omega*_{\kappa}\omega') - F_{\kappa+s}(\omega*_{\kappa}\omega')
  = F_t(\omega') - F_s(\omega')
\end{align*}
for all $\omega\in\Omega$ with $\kappa(\omega)<\infty$,
 $\omega'\in\sC_{\omega,\kappa(\omega)}$ and
all $s,t\in\Time$.
\end{lemma}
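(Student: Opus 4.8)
The plan is to unwind both sides directly, using the defining property of a TC-morphism together with the explicit formula \eqref{equ:adj-concat} for the adjusted concatenation $\star$ on $\dr$. First I would fix $\omega$, $\omega'$ and $\kappa$ as in the statement, abbreviate $k = \kappa(\omega)$ (finite by hypothesis), and recall from \eqref{equ:conc-prop} that the stopping-time concatenation reduces to the deterministic-time one, $\omega \ast_{\kappa} \omega' = \omega \ast_{k} \omega'$; the assumption $\omega' \in \sC_{\omega,\kappa(\omega)} = \sC_{\omega,k}$ is exactly the compatibility needed to invoke the morphism property at the deterministic time $k$.

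Since $F$ is a TC-morphism into $(\dr,\star)$, the second clause of Definition \ref{def:TC-morph} yields the identity of \cd paths $F(\omega \ast_k \omega') = F(\omega) \star_k F(\omega')$ in $\dr$. I would then evaluate this at the two times $k+t$ and $k+s$. Writing $F_u(\cdot)$ for the coordinate $F(\cdot)(u)$ and using the $s>t$ branch of \eqref{equ:adj-concat} (legitimate since $k+t\geq k$, with the boundary case $t=0$ covered automatically because the two branches of \eqref{equ:adj-concat} agree at $u=k$), one obtains
\[ F_{k+t}(\omega\ast_k\omega') = F_k(\omega) + F_t(\omega') - F_0(\omega'), \]
together with the analogous expression in which $t$ is replaced by $s$.

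Subtracting these two equalities, the anchor terms $F_k(\omega)$ and $-F_0(\omega')$ cancel, leaving $F_{k+t}(\omega\ast_k\omega') - F_{k+s}(\omega\ast_k\omega') = F_t(\omega') - F_s(\omega')$, which is the assertion. There is no genuine obstacle here; the only point worth flagging is that it is precisely the initial-value correction $-F(\omega')(0)$ built into the \emph{adjusted} concatenation $\star$ (as opposed to the strict $\bullet$) that appears identically in both evaluations and therefore cancels upon subtraction. This is exactly why the lemma is phrased in terms of increments and stated for the target $(\dr,\star)$ rather than for strict concatenation.
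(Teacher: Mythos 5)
Your argument is correct and is exactly the ``straightforward proof'' that the paper explicitly omits for this lemma: reduce $\ast_{\kappa}$ to $\ast_{k}$ with $k=\kappa(\omega)$ via \eqref{equ:conc-prop}, apply the TC-morphism identity $F(\omega\ast_k\omega')=F(\omega)\,\star_k\,F(\omega')$, evaluate the adjusted concatenation \eqref{equ:adj-concat} at $k+t$ and $k+s$, and subtract so that the anchor terms $F_k(\omega)$ and $-F_0(\omega')$ cancel. Your remark that the two branches of \eqref{equ:adj-concat} agree at the boundary, so the case $t=0$ or $s=0$ needs no separate treatment, is the only point of care, and you have handled it.
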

Our second lemma gives a convenient characterization of
canonical local martingales.
We use $\Stop$, as in the case of
T-spaces, to denote the set of all $\Time$-valued (raw) stopping times.
We also write $Y^n = Y^{\tau_n}$, where
$\tau_n = \inf\sets{t\geq 0}{\abs{Y_t} \geq n} \wedge n$, and note that all
sampled values of $Y$ in the statement are well-defined thanks to the fact
that each $Y^n$ is constant after $t=n$.
\begin{lemma}
\label{lem:mart-char}
Let $(\Omega,\sF,\FFF=\prst{\sF_t},\PP)$ be a filtered probability space,
$\prst{Y_t}$ a \cd and adapted process,
and $\kappa$ a stopping time with $Y^n_{\kappa}\in\lone$ for each $n\in\N$.
Then, the following two statements are equivalent
\begin{enumerate}
\item $Y$ is a canonical local martingale.
\item $G\in\lone$ and $\EE[G]=0$ for all
\[ G\in \bigcup_{n\in\N}\sX_n^{\leq \kappa}(Y) \cup \sX_n^{\geq
\kappa}(Y),\] where
the countable sets $\sX_n^{\leq \kappa}$ and  $\sX_n^{\geq \kappa}$
are given by
\begin{align*}
  \sX_n^{\leq \kappa}(Y)
  &= \Bsets{ Y^n_{\tau\wedge\kappa} - Y^n_{\kappa} }{\tau\in\Qstop}, \\
  \sX_n^{\geq \kappa}(Y)
  &= \Bsets{ Y^n_{\tau\vee\kappa} - Y^n_{\kappa} }{\tau\in\Qstop}.
\end{align*}
\end{enumerate}
\end{lemma}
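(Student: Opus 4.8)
The plan is to prove the two implications separately, the crux being a simple observation that collapses the two families $\sX_n^{\leq \kappa}(Y)$ and $\sX_n^{\geq \kappa}(Y)$ into a single statement about the values of $Y^n$ sampled along $\Qstop$. The key point is that, as an unordered pair, $\tau\wedge\kappa$ and $\tau\vee\kappa$ are just $\tau$ and $\kappa$, so pathwise $Y^n_{\tau\wedge\kappa}+Y^n_{\tau\vee\kappa}=Y^n_\tau+Y^n_\kappa$. Hence, for each fixed $\tau\in\Qstop$, the sum of the two corresponding family members equals $Y^n_{\tau}-Y^n_\kappa$, and so condition (2) implies, in particular, that $\EE[Y^n_\tau]=\EE[Y^n_\kappa]$ for every $n$ and every $\tau\in\Qstop$. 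Throughout I use that $Y^n=Y^{\tau_n}$ with $\tau_n\leq n$, so $Y^n_t=Y_{t\wedge\tau_n}$ is \emph{constant after time $n$}, equal to $Y^n_n$ (this also disambiguates $Y^n_\kappa=Y^n_n$ on $\{\kappa=\infty\}$).

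For the implication (1)$\Rightarrow$(2), I would first note that a martingale $Y^n$ that is constant after time $n$ is automatically uniformly integrable and closed by $Y^n_n\in\lone$. The optional sampling theorem then yields $Y^n_\sigma\in\lone$ and $\EE[Y^n_\sigma]=\EE[Y^n_0]$ for \emph{every} stopping time $\sigma$; applying this to $\sigma\in\{\tau\wedge\kappa,\kappa,\tau\vee\kappa\}$ gives both the integrability and the vanishing of the expectations of the increments in $\sX_n^{\leq\kappa}(Y)$ and $\sX_n^{\geq\kappa}(Y)$, which is exactly (2).

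For (2)$\Rightarrow$(1), I would proceed in three steps. First, integrability: taking $\tau=r$ constant in both families and using the standing hypothesis $Y^n_\kappa\in\lone$ gives $Y^n_{r\wedge\kappa},Y^n_{r\vee\kappa}\in\lone$, whence the pairing identity above gives $Y^n_r\in\lone$ for all rational $r$. Second, the martingale property along $\Qtime$: for $\tau=q\ind{A}+r\ind{A^c}$ with $q\leq r$ in $\Qtime$ and $A\in\Pi_q$, adding the two zero-mean increments gives $\EE[Y^n_\tau]=\EE[Y^n_\kappa]$, while the constant-time case gives $\EE[Y^n_r]=\EE[Y^n_\kappa]$; subtracting isolates $\EE[Y^n_r\ind{A}]=\EE[Y^n_q\ind{A}]$, and since $\sigma(\Pi_q)=\sF_q$ the $\pi$-$\ld$ theorem upgrades this to $\EE[Y^n_r\mid\sF_q]=Y^n_q$. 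Third, I extend to all of $\Time$ exactly as in the proof of Lemma \ref{lem:F-mart}: the backward martingale convergence theorem, together with right-continuity of $Y^n$ and adaptedness (to replace $\sF_{s+}$ by $\sF_s$) and a right-approximation $r_m\downarrow t$, promotes the $\Qtime$-martingale property to a genuine martingale property on $\Time$ for each $n$, so $Y$ is a canonical local martingale.

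The steps that carry the real analytic content are the two standard-but-delicate ones: verifying uniform integrability so that optional sampling applies unconditionally (which rests entirely on $Y^n$ being constant after $n$), and running the rational-to-real-time limiting argument carefully, including the handling of $Y^n_\kappa=Y^n_n$ on $\{\kappa=\infty\}$ and the $\sF_{s+}$-versus-$\sF_s$ passage. The algebra collapsing the two families is routine once the multiset identity is noticed; I would also flag that keeping $\sX_n^{\leq\kappa}$ and $\sX_n^{\geq\kappa}$ separate is not needed for this lemma, but is precisely what makes them usable in Proposition \ref{pro:concat-mart}, where the ``before $\kappa$'' and ``after $\kappa$'' increments are controlled under two different measures via Lemma \ref{lem:increment}.
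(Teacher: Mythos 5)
Your proposal is correct and follows essentially the same route as the paper: the forward direction uses that each $Y^n$ is constant after $n$, hence a uniformly integrable martingale to which optional sampling applies, and the converse direction rests on the same pathwise identity $Y^n_{\tau\wedge\kappa}+Y^n_{\tau\vee\kappa}=Y^n_\tau+Y^n_\kappa$ to deduce $\EE[Y^n_\tau]=\EE[Y^n_\kappa]$ for all $\tau\in\Qstop$, followed by the rational-to-real extension borrowed from Lemma~\ref{lem:F-mart}. Your explicit subtraction step isolating $\EE[Y^n_r\ind{A}]=\EE[Y^n_q\ind{A}]$ and your handling of $Y^n_\kappa$ on $\{\kappa=\infty\}$ merely spell out details the paper leaves implicit.
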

\begin{proof}
$(1) \Implies (2)$\
Assuming that $Y$ is a canonical local martingale, each $Y^n$ is
martingale constant after $t=n$, and therefore a uniformly-integrable martingale. Stopping times in $\Qstop$ are bounded, so, by the optional
sampling theorem, (2) holds.

$(2) \Implies (1)$\ Suppose that (2) holds and that $n\in\N$ is fixed.
We take the advantage of the fact that $Y$ is
\cd to conclude (as in the proof of Lemma \ref{lem:F-mart}) that
it suffices
to show that $Y^n$ is a martingale on $\Qtime$. For that, in turn,
we choose $\tau\in\Qstop$, so that
$\tau = p\ind{A} + q\ind{A^c}$ for some
$p\leq q\in\Qtime$ and $A\in\Pi_p$ and note that
\begin{align*}
   Y^n_{\tau} - Y^n_\kappa =
  \Big(Y^n_{\tau\wedge\kappa} - Y^n_\kappa\Big)
  + \Big(Y^n_{\tau\vee\kappa} - Y^n_\kappa \Big).
\end{align*}
Since $Y^n_{\tau\wedge\kappa} - Y^n_{\kappa} \in \sX^{\leq \kappa}$
$Y^n_{\tau\vee\kappa} - Y^n_{\kappa} \in \sX^{\geq \kappa}$ and
$Y^n_{\kappa} \in \lone$, we conclude that $ Y^n_{\tau}
\in\lone$ and
that $\EE[ Y^n_{\tau}] = \EE[ Y^n_\kappa] $.
It follows that the value of
$\EE[ Y^n_{\tau}]$ does not depend on the choice of $\tau$, making $Y^n$
into a martingale.
\end{proof}

\begin{lemma}
\label{lem:tau-split}
Let $\Omega$ be a TC-space and $\kappa,\tau\in\Stop$  such that $\kappa\leq\tau$.
For $\omega\in\Omega$ we define $\tau'_{\omega}$ by
$$ \tau_\omega'(\omega') = \begin{cases}
\tau(\omega \ast_\kappa \omega') - \kappa(\omega),
& \kappa(\omega)<\infty \eand \omega'\in\sC_{\omega,\kappa(\omega)} \\
+ \infty, & \text{otherwise,} \\ \end{cases} $$
Then the map $(\omega,\omega')\mapsto\tau_\omega'(\omega')$
is jointly measurable,
$\tau_\omega'\in\Stop$ for any fixed $\omega\in\Omega$,
and $\tau(\omega*_\kappa\omega') = \kappa(\omega) + \tau_\omega'(\omega')$.
\end{lemma}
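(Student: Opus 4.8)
The plan is to handle the three assertions in a convenient order, extracting first a Galmarino-type identity that underlies all of them. \textbf{Preliminary identity.} I would begin by showing that, whenever $\kappa(\omega)<\infty$ and $\omega'\in\sC_{\omega,\kappa(\omega)}$, the concatenated path $\zeta:=\omega\ast_\kappa\omega'$ satisfies $\kappa(\zeta)=\kappa(\omega)$. Writing $c=\kappa(\omega)$, the identity \eqref{equ:TC-2} gives $\zeta_{\leq s}=\omega_{\leq s}$ for every $s\leq c$. Testing the event $\{\kappa\leq c\}\in\sF_c$ and the events $\{\kappa\leq s\}\in\sF_s$ for $s<c$ against the membership characterization of Proposition \ref{pro:prop-trunc}\eqref{ite:fix-filtr} (for deterministic times) yields $\kappa(\zeta)\leq c$ and $\kappa(\zeta)>s$ for all $s<c$, hence $\kappa(\zeta)=c$. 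Since $\kappa\leq\tau$, this forces $\tau(\zeta)\geq\kappa(\zeta)=\kappa(\omega)$, so the difference defining $\tau'_\omega$ is a genuine element of $\Time\cup\{\infty\}$ rather than a negative number.

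\textbf{Assertion (3)} is then essentially the definition on the set where $\kappa(\omega)<\infty$ and $\omega'\in\sC_{\omega,\kappa(\omega)}$: rearranging gives $\tau(\omega\ast_\kappa\omega')=\kappa(\omega)+\tau'_\omega(\omega')$, the right-hand side being well posed by the preliminary identity. The only other case is $\kappa(\omega)=\infty$, where the convention $\omega\ast_\infty\omega'=\omega$ and $\kappa\leq\tau$ give $\tau(\omega\ast_\kappa\omega')=\tau(\omega)=\infty$, matching $\kappa(\omega)+\tau'_\omega(\omega')=\infty+\infty$.

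\textbf{Assertion (2)}, that $\tau'_\omega\in\Stop$ for fixed $\omega$, is where I expect the real work. Fixing $\omega$ with $c=\kappa(\omega)<\infty$ (the case $c=\infty$ gives $\tau'_\omega\equiv\infty$, trivially a stopping time), I would study the map $g:\sC_{\omega,c}\to\Omega$, $g(\omega')=\omega\ast_c\omega'$. The structural fact to read off from \eqref{equ:TC-2} is the filtration shift $g(\omega')_{\leq c+t}=g(\omega'_{\leq t})_{\leq c+t}$ for $t\geq0$ (for $t>0$ both equal $\omega\ast_c\omega'_{\leq t}$). Since $\{\tau\leq c+t\}\in\sigma(T_{c+t})$ and $\sC_{\omega,c}$ is invariant under truncation of its second slot by the third equivalence in \eqref{equ:TC-comp}, Proposition \ref{pro:prop-trunc}\eqref{ite:fix-filtr} lets me pull the event back: the set $\{\tau'_\omega\leq t\}=\{\omega'\in\sC_{\omega,c}:\tau(g(\omega'))\leq c+t\}$ is invariant under $\omega'\mapsto\omega'_{\leq t}$ and therefore lies in $\sigma(T_t)=\sF_t$, which is precisely the stopping-time property.

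\textbf{Assertion (1)}, joint measurability, I would obtain by assembling measurable pieces. On the measurable set $\sC_\kappa=\{(\omega,\omega'):(\omega,\kappa(\omega),\omega')\in\sC\}$ (measurable as the pullback of $\sC$ noted in \S\ref{sss:conc-meas}), the map $(\omega,\omega')\mapsto\omega\ast_\kappa\omega'$ is measurable, being the composition of the measurable concatenation operator with the measurable map $(\omega,\omega')\mapsto(\omega,\kappa(\omega),\omega')$; composing with $\tau$ and subtracting the measurable $\kappa(\omega)$ gives measurability there, while off $\sC_\kappa\cap\{\kappa<\infty\}$ the function is the constant $+\infty$. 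The main obstacle throughout is the preliminary Galmarino identity together with the filtration-shift computation for $g$; once these are secured from \eqref{equ:TC-comp}, \eqref{equ:TC-2} and Proposition \ref{pro:prop-trunc}, the remainder is bookkeeping with the $\infty$-conventions.
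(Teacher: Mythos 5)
Your proposal is correct and follows essentially the same route as the paper's proof: assertion (3) is definitional, joint measurability comes from composing the measurable maps $(\omega,\omega')\mapsto(\omega,\kappa(\omega),\omega')$, $\ast$ and $\tau$, and the stopping-time property is established by showing that $\{\tau'_\omega\leq t\}$ is invariant under $T_t$, using \eqref{equ:TC-comp} for the compatibility part and \eqref{equ:TC-2} together with Proposition \ref{pro:prop-trunc}\eqref{ite:fix-filtr} for the inequality involving $\tau$. Your preliminary Galmarino-type identity $\kappa(\omega\ast_\kappa\omega')=\kappa(\omega)$, which guarantees the difference defining $\tau'_\omega$ is nonnegative, is a small worthwhile addition that the paper leaves implicit, but it does not change the argument.
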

\begin{proof}
By construction, we clearly have
$\tau(\omega*_\kappa\omega') = \kappa(\omega) + \tau_\omega'(\omega')$.
With the convention that $\tau(\omega\ak \omega') - \kappa(\omega)=\infty$
when $\kappa(\omega)=\infty$, we note that $\tau'$ can be expressed as:
\begin{align*}
  \tau_\omega'(\omega')
  &= (+\infty) \ind{\sC^c}(\omega,\kappa(\omega),\omega')
  + (\tau(\omega \ast_\kappa \omega') - \kappa(\omega))
  \ind{\sC}(\omega,\kappa(\omega),\omega')
\end{align*}
and is hence jointly measurable.
It remains to argue that $\tau_\omega'$ is a stopping time.
We fix $\omega\in\Omega$ with $k=\kappa(\omega)<\infty$, and for
 $s \in \Time$ define
\[ A=\sets{\omega'\in\Omega}{ \tau'(\omega') \leq s } = \sets{ \omega' \in
\sC_{\omega,k}}{
  \tau(\omega \ast_k \omega') \leq s+k }.\]
By Proposition \ref{pro:prop-trunc}, part (1), it will suffice to show that
$\Ts^{-1}(A)=A$, i.e., for $\omega'\in\Omega$ we have $(a) \Leftrightarrow (b)$, where
\begin{enumerate}[label=$(\alph*)$]
  \item
$ \omega' \in \sC_{\omega,k}  \text{ and } \tau(\omega\ast_k \omega')\leq
s+k$, and
\item
$ (\omega')_{\leq s} \in \sC_{\omega,k}  \text{ and } \tau(\omega\ast_k
(\omega'_{\leq s})) \leq
s+k$.
\end{enumerate}
The first, compatibility-related, parts of statements of (a) and (b) are equivalent to each
other by
the assumptions in
\eqref{equ:TC-comp} of Definition \ref{def:TC}. To deal with the
inequalities involving $\tau$ we use
Proposition
\ref{pro:prop-trunc}, part (2), as well as the assumption \ref{equ:TC-2} of
Definition \ref{def:TC} to conclude that
 \begin{align*}
   \tau\Big( \omega\ast_k (\omega'_{\leq s})  \Big) \leq s+k
   &\Leftrightarrow
   \tau\Big( \big(\omega\ast_k (\omega'_{\leq s}) \big)_{\leq s+k}  \Big) \leq s+k
   \Leftrightarrow
   \tau\Big( (\omega\ast_k \omega')_{\leq s+k}  \Big) \leq s+k\\
   &\Leftrightarrow
   \tau\Big( \omega\ast_k \omega'  \Big) \leq s+k. \qedhere
 \end{align*}
\end{proof}

\begin{proof}[Proof of Proposition \ref{pro:concat-mart}]
Let $\sP$ be the $(\sD,X)$-generated control correspondence as in the statement,
and let
$\omega_0\in \Omega$, $\mu\in \sP(\omega_0)$,  a kernel $\nu \in
\sS(\sP)$ and a stopping time $\kappa$ be given.

First, we argue that $\nu$ is $\kappa$-compatible with  $\mu$.
By the definition of $\sP$, we have $\nu_{\omega}( X_0 = X(\omega) )=1$ for
each $\omega\in\Omega$. After a composition with $T_{\kappa}$, we get
$\nu^{\leq\kappa}_{\omega}(X_0 = X_{\kappa}(\omega)) = 1$ for each $\omega\in\Omega$,
which implies compatibility,
according to the criterion of \eqref{equ:crit-comp}.

Next, we show that $\mu'=\mu\ak\nu\in\sP(\omega_0)$.
Part (2) of Definition \ref{def:TC} makes it clear that for $x=X_0(\omega_0)$
we have $\mu'(X_0=x)=1$.  Therefore, we need to argue that
$\mu'\in\sM^{F,loc}$, for each $F\in\sD$.
By Lemma \ref{lem:mart-char}, this is equivalent to checking
$\int G\, d(\mu\ak\nu) = 0 $ for all
$G \in \cup_{n\in\N}\sX_n^{\leq \kappa}(F)\cup \sX_n^{\geq \kappa}(F)$.
We fix $n\in\N$ and treat the two cases separately:

1. $G \in \sX_n^{\leq \kappa}(F)$:\
In this case there exists $\tau\in\Qstop$,
such that
$G(\omega) = F^n_{(\tau\wedge\kappa)(\omega)}(\omega)
- F^n_{\kappa(\omega)}(\omega)$.
By Definition \ref{def:TC}, part (2),
we have $(\tau\wedge\kappa)(\omega\ak \omega') =
(\tau\wedge\kappa)(\omega)$
and $\kappa(\omega\ak\omega') = \kappa(\omega)$,
so that, by the non-anticipativity of $F^n$
(which follows from the non-anticipativity of $F$),
we have
\begin{align*}
  G(\omega \ak \omega')
  = F^n_{(\tau\wedge\kappa)(\omega)}(\omega\ak \omega')
  - F^n_{\kappa(\omega)}(\omega \ak \omega')
  = F^n_{(\tau\wedge\kappa)(\omega)}(\omega)
  - F^n_{\kappa(\omega)}(\omega) = G(\omega).
\end{align*}
Since $G$ is bounded (since so is $F^n$) we have
\begin{align*}
  \int G\, d\mu' &= \iint G(\omega\ak \omega')\,
  \nu^{\leq \kappa}_\omega(d\omega')\, \mu(d\omega) = \int G(\omega)\, \mu(d\omega)=0,
\end{align*}
where the last equality follows from the fact that $\mu \in\sM^{F,loc}$.

2. $G \in \sX_n^{\geq \kappa}(F)$:\
Let $\tau\in\Qstop$ be such that $G=F^n_{\tau\vee\kappa}-F^n_\kappa$. Then
\begin{align*}
  \int F^n_{\tau\vee\kappa}(\omega) - F^n_\kappa(\omega) \,\mu'(d\omega)
  &=  \int \inds{\tau_n>\kappa}(\omega)
  (F^n_{\tau\vee\kappa}(\omega) - F^n_\kappa(\omega)) \,\mu'(d\omega) \\
  &= \int \inds{\tau_n>\kappa}(\omega)
  (F_{(\tau\wedge\tau_n)\vee\kappa}(\omega) - F_\kappa(\omega))\,\mu'(d\omega)
\end{align*}
Note that $(\tau\wedge\tau_n)\vee\kappa \geq \kappa$,
and let $\tau'$ be as in Lemma \ref{lem:tau-split}
(applied to $(\tau\wedge\tau_n)\vee\kappa$).
Also note that by Proposition \ref{pro:prop-trunc},
$\{\tau_n>\kappa\}\in\sF_\kappa=\sigma(T_\kappa)$.
Therefore $\inds{\tau_n>\kappa}$ is $\sigma(T_\kappa)$-measurable
and so $\inds{\tau_n>\kappa}(\omega\ak\omega')
=\inds{\tau_n>\kappa}(\omega_{\leq\kappa})=\inds{\tau_n>\kappa}(\omega)$.
Continuing with the equalities from above, we have
\begin{align*}
  &\int F^n_{\tau\vee\kappa}(\omega) - F^n_\kappa(\omega) \,\mu'(d\omega) =
  \\
  &= \iint \inds{\tau_n>\kappa}(\omega)
   (F_{\kappa(\omega)+\tau_\omega'(\omega')}(\omega\ak\omega')
  - F_\kappa(\omega\ak\omega'))\,\nu_\omega(d\omega')\mu(d\omega) \\
  &=
  \iint \inds{\tau_n>\kappa}(\omega)
   (F_{\tau_\omega'}(\omega')
  - F_0(\omega'))\,\nu_\omega(d\omega')\mu(d\omega)
  = \iint \inds{\tau_n>\kappa}(\omega)
   F_{\tau_\omega'}(\omega')\,\nu_\omega(d\omega')\mu(d\omega).
\end{align*}
where the last equality used the TC-morphism assumption together with
Lemma \ref{lem:increment}.
With $M_n$ given by \eqref{equ:Mn},
$\abs{F}$ is bounded
on $[0, \tau'_{\omega}]$
by $2 M_{2n}$
when
$\omega\in \set{\kappa<\tau_n}$,
By the canonical local martingale property, we have
  $\int F_{\tau_\omega'}(\omega')\,\nu_\omega(d\omega')$  for each $\omega
  \in \set{\kappa<\tau_n}$. Thanks to boundedness, again, the integral
 $\int G\, d\mu'$ can be computed as an iterated integral $ \int
 \inds{\tau_n>\kappa}(\omega) \int
   F_{\tau_\omega'}(\omega')\,\nu_\omega(d\omega')\mu(d\omega)$ and, so,
   $\int G\, d\mu'=0$.
\end{proof}

\subsection{Sufficient conditions for disintegrability}\

\subsubsection{Shift operators}
The key to disintegrability for martingale-generated control correspondences is the existence of a shift operator, as described below.
It plays the role of a partial inverse of the concatenation
operator in the second argument.
\begin{definition}
\label{def:shift}
A measurable map $\theta:\Time \times \Omega \to \Omega$ is said to be a
\define{shift operator} if for all $\omega\in\Omega$, $t,s\in
\Time$ and $\omega'\in\sC_{\omega,t}$
\begin{enumerate}
\item
$\theta_t(\omega) \in \sC_{\omega,t}$ and
    $\omega \ast_t \theta_t(\omega) = \omega$,
\item \label{ite:shift-adapt}
$(\theta_t(\omega))_{\leq t+s} = (\theta_t(\omega_{\leq s}))_{\leq t+s}$
\end{enumerate}
\end{definition}

\begin{remark}
Since $\sF_t = \sigma(T_t)$ on $\Omega$,
then part \eqref{ite:shift-adapt} of Definition \ref{def:shift} is
equivalent to the $(\sF_s,\sF_{t+s})$-measurability of $\theta_t$ for all
$t,s\in\Time$, i.e.,
\begin{align*}
  \forall\, t,s\in\Time:\quad
  \theta_t^{-1}(\sF_{t+s}) \subset \sF_s
\end{align*}
\end{remark}

The stopping-time version of a shift operator $\theta$ is defined in the natural way
\[ \theta_{\tau}(\omega) = \theta_{\tau(\omega)}(\omega),\]
where, for definiteness, we set $\theta_{\infty}(\omega) = \omega$, for all
$\omega$. This way, $\theta_{\tau}:\Omega\to\Omega$ is Borel measurable and
retains the property that
$\omega \at \theta_{\tau}(\omega) = \omega$, for all $\omega\in\Omega$ and
$\tau \in \Stop$.

\begin{lemma}
\label{lem:theta}
For any $\kappa,\sigma\in\Stop$, the following is also a stopping time:
\begin{align*}
  \tau(\omega) := \kappa(\omega) + \sigma(\theta_\kappa(\omega))
\end{align*}
\end{lemma}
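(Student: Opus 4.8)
The plan is to verify directly that $\set{\tau\le s}\in\sF_s$ for every $s\in\Time$; since $\sF=\bigvee_t\sF_t$, this is exactly what makes $\tau$ a stopping time. Because $\sF_s=\sigma(T_s)$, Proposition~\ref{pro:prop-trunc}\eqref{ite:fix-filtr}, applied to the constant time $s$, reduces the membership $\set{\tau\le s}\in\sF_s$ to the truncation-invariance
\[
  \tau(\omega)\le s \iff \tau(\omega_{\le s})\le s \qquad\text{for all }\omega\in\Omega.
\]
I would establish this equivalence for a fixed $s$, splitting on the value of $\kappa(\omega)$ and using throughout that $\sigma\ge 0$, so that $\tau\ge\kappa$ pointwise.

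First I would dispose of the case $\kappa(\omega)>s$ (which also covers $\kappa(\omega)=\infty$). Here $\tau(\omega)\ge\kappa(\omega)>s$, and since $\kappa\in\Stop$, the truncation-invariance of $\set{\kappa\le s}$ forces $\kappa(\omega_{\le s})>s$, whence $\tau(\omega_{\le s})\ge\kappa(\omega_{\le s})>s$; thus both sides of the displayed equivalence fail. In the complementary case $k:=\kappa(\omega)\le s$, I would first record that $\kappa(\omega_{\le s})=k$. This is the standard fact that $\kappa\wedge s$ is $\sF_s$-measurable: it is a stopping time bounded by $s$, hence $\sF_{\kappa\wedge s}\subseteq\sF_s$, so by Proposition~\ref{pro:prop-trunc}\eqref{ite:F_t-meas} (with the constant time $s$) one has $(\kappa\wedge s)\circ T_s=\kappa\wedge s$; evaluating at $\omega$ and using $k\le s$ together with $\kappa(\omega_{\le s})\le s$ (again truncation-invariance of $\set{\kappa\le s}$) gives $\kappa(\omega_{\le s})=k$. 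Consequently $\tau(\omega)=k+\sigma(\theta_k(\omega))$ and $\tau(\omega_{\le s})=k+\sigma(\theta_k(\omega_{\le s}))$, so it remains to prove
\[
  \sigma\big(\theta_k(\omega)\big)\le s-k \iff \sigma\big(\theta_k(\omega_{\le s})\big)\le s-k.
\]

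The crux is the identity $\big(\theta_k(\omega)\big)_{\le s-k}=\big(\theta_k(\omega_{\le s})\big)_{\le s-k}$, after which the equivalence is immediate from $\sigma\in\Stop$: truncation-invariance of $\set{\sigma\le s-k}$ lets me replace $\theta_k(\omega)$ and $\theta_k(\omega_{\le s})$ by their common $(s-k)$-truncation. To obtain the identity I would apply the defining property \eqref{ite:shift-adapt} of the shift with outer time $k$ and inner level $s-k$, both to $\omega$ and to $\omega_{\le s}$: this yields $\big(\theta_k(\omega)\big)_{\le s}=\big(\theta_k(\omega_{\le s-k})\big)_{\le s}$ and, using $(\omega_{\le s})_{\le s-k}=\omega_{\le s-k}$, also $\big(\theta_k(\omega_{\le s})\big)_{\le s}=\big(\theta_k(\omega_{\le s-k})\big)_{\le s}$, so that $\big(\theta_k(\omega)\big)_{\le s}=\big(\theta_k(\omega_{\le s})\big)_{\le s}$. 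Truncating both sides further at level $s-k\le s$ and invoking $T_{s-k}\circ T_s=T_{s-k}$ from Definition~\ref{def:trunc}\eqref{ite:proj-trunc} collapses these to the desired $(s-k)$-truncations.

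The main obstacle is exactly this last identity: property \eqref{ite:shift-adapt} only equates the two shifts after truncation at the level $k+(s-k)=s$, whereas the stopping time $\sigma$ must be compared on inputs agreeing up to level $s-k$. Bridging that gap — applying \eqref{ite:shift-adapt} twice and then re-truncating from level $s$ down to level $s-k$ via the semigroup property of the truncations — is the one genuinely non-formal step; everything else is bookkeeping on top of the stopping-time characterization in Proposition~\ref{pro:prop-trunc}.
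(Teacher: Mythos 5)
Your proof is correct and follows essentially the same route as the paper's: reduce $\set{\tau\le s}\in\sF_s$ to truncation-invariance via Proposition~\ref{pro:prop-trunc}\eqref{ite:fix-filtr}, use that $\kappa(\omega_{\le s})=\kappa(\omega)$ when $\kappa(\omega)\le s$, and then combine property \eqref{ite:shift-adapt} of the shift with the truncation-invariance of $\set{\sigma\le s-k}$. Your explicit handling of the case $\kappa(\omega)>s$ and the double application of \eqref{ite:shift-adapt} followed by re-truncation from level $s$ down to $s-k$ is a more careful rendering of the step the paper dispatches with a one-line citation, but it is the same argument.
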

\begin{proof}
Fix any $t\in\Time$ and $\omega\in\Omega$.
In order to show $\{\tau\leq t\}\in\sF_t$,
it is enough to show that
$\tau(\omega)\leq t$ if and only if $\tau(\omega_{\leq t})\leq t$.
Applying Proposition \ref{pro:prop-trunc} to $\sigma$
and using part (2) of the definition of $\theta$
gives the following equivalence:
\begin{align*}
  \tau(\omega)\leq t
  &\Leftrightarrow
  \sigma(\theta_{\kappa(\omega)}(\omega)) \leq t - \kappa(\omega) \\
  &\Leftrightarrow
  \sigma((\theta_{\kappa(\omega)}(\omega))_{\leq t-\kappa(\omega)})
    \leq t - \kappa(\omega) \\
  &\Leftrightarrow
  \sigma((\theta_{\kappa(\omega)}(\omega_{\leq t}))_{\leq t-\kappa(\omega)})
    \leq t - \kappa(\omega) \\
  &\Leftrightarrow
  \sigma(\theta_{\kappa(\omega)}(\omega_{\leq t}))
    \leq t - \kappa(\omega)
\end{align*}
First suppose $\tau(\omega)\leq t$.
Since $\kappa$ is a stopping time and $\kappa(\omega)\leq\tau(\omega)\leq t$,
then $\kappa(\omega)=\kappa(\omega_{\leq t})$.
Together with the above equivalence, this implies:
\begin{align*}
  \tau(\omega_{\leq t})
  &= \kappa(\omega_{\leq t})
  + \sigma(\theta_{\kappa(\omega_{\leq t})}(\omega_{\leq t})) \\
  &= \kappa(\omega) + \sigma(\theta_{\kappa(\omega)}(\omega_{\leq t}))
  \leq t
\end{align*}
For the other direction, suppose $\tau(\omega_{\leq t}) \leq t$.
Since $\kappa$ is a stopping time and
$\kappa(\omega_{\leq t})\leq\tau(\omega_{\leq t})\leq t$,
then $\kappa(\omega_{\leq t})=\kappa(\omega)$.
Therefore:
\begin{align*}
  \kappa(\omega) + \sigma(\theta_{\kappa(\omega)}(\omega_{\leq t}))
  &= \kappa(\omega_{\leq t})
  + \sigma(\theta_{\kappa(\omega_{\leq t})}(\omega_{\leq t}))
  = \tau(\omega_{\leq t})
  \leq t,
\end{align*}
which implies $\tau(\omega)\leq t$ by the equivalence above.
\end{proof}

\begin{proposition}
\label{pro:dis}
Let $\filt$ be a TC-space with concatenation operator $\ast$,
on which a shift operator $\theta$ is defined.
Suppose each $F\in \sD$ is a canonically locally bounded TC-morphism into $(\dr^0,\star)$,
and that $\ast$ is a factor of $X$.
Then, for each $\omega_0\in \Omega$, $\mu\in \sP(\sD,X)(\omega_0)$, and $\kappa\in
\Stop$ there exists a version $x\mapsto \bar{\nu}_x$ of the regular conditional
probability  $\mu(\theta_{\kappa} \in \cdot | X_{\kappa}=x)$ such that
for $\nu = \bar{\nu}\circ X$ we have
\[ \nu \in \sS(\sP) \eand \mu \ast_{\kappa} \nu = \mu.\]

In particular, $\sP(\sD,X)$ is disintegrable.
\end{proposition}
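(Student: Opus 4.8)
The plan is to construct $\bar{\nu}$ as a regular conditional probability, verify that $\nu=\bar{\nu}\circ X$ is a genuine $\sP$-selector, and then recover $\mu$ from $\mu\ast_\kappa\nu$ by means of the shift. Since $\Omega$ is standard Borel and $\theta_\kappa\colon\Omega\to\Omega$ and $X_\kappa=X\circ T_\kappa$ are Borel, the pushforward $\mu\circ(\theta_\kappa,X_\kappa)^{-1}$ on $\Omega\times E$ disintegrates over its second marginal $\mu\circ X_\kappa^{-1}$, producing a universally measurable kernel $x\mapsto\bar{\nu}_x$ with $\bar{\nu}_x(\cdot)=\mu(\theta_\kappa\in\cdot\mid X_\kappa=x)$ for $\mu\circ X_\kappa^{-1}$-a.e.\ $x$; off this set I would set $\bar{\nu}_x$ equal to any fixed element of $\bsP(x)$, so that $\nu=\bar{\nu}\circ X$ is defined on all of $\Omega$. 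The final assertion of the proposition is then immediate from Definition \ref{def:analytic} once the two displayed claims are in hand.

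First I would show $\nu\in\sS(\sP)$, that is $\bar{\nu}_x\in\bsP(x)$ for every $x$. The initial-condition constraint is the easy half: since $\theta_\kappa(\omega)\in\sC_{\omega,\kappa(\omega)}$ and each $F\in\sD$ is a TC-morphism, the factor relation of Definition \ref{def:comp-X} yields $X_0(\theta_\kappa(\omega))=X_\kappa(\omega)$, so the shifted path starts at state $X_\kappa$ and hence $\bar{\nu}_x(X_0=x)=1$. The substantive half is $\bar{\nu}_x\in\sM^{F,loc}$ for each $F\in\sD$. By Lemma \ref{lem:mart-char} (with reference time $0$) this reduces to checking $\EE^{\bar{\nu}_x}[H]=0$ for every $H$ in the countable family $\bigcup_n\sX_n^{\geq 0}(F)=\{F^n_\tau-F^n_0:\tau\in\Qstop,\ n\in\N\}$. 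Pulling each such $H$ back through $\theta_\kappa$ and invoking Lemma \ref{lem:increment} (increments of a TC-morphism after a concatenation coincide with increments of the shifted path) together with the adaptedness \eqref{ite:shift-adapt} of $\theta$, one sees that $H\circ\theta_\kappa$ is, up to the time change of Lemma \ref{lem:tau-split}, an increment of $F^n$ along $\mu$-stopping times; canonical local boundedness \eqref{equ:Mn} supplies the uniform integrability that legitimizes optional sampling. Because $\mu\in\sM^{F,loc}$, all of these increments have vanishing conditional expectation given $X_\kappa$, $\mu$-a.e., and since $\sD$ and each $\sX_n$ are countable, only countably many a.e.\ identities occur; a single version of $\bar{\nu}$ may therefore be chosen realizing $\bar{\nu}_x\in\bigcap_{F\in\sD}\sM^{F,loc}\cap\{X_0=x\}=\bsP(x)$ for every $x$ in the support, the off-support convention preserving this.

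Next I would verify $\mu\ast_\kappa\nu=\mu$. By \eqref{equ:act-ast} and $\nu^{\leq\kappa}_\omega=\bar{\nu}_{X_\kappa(\omega)}$,
\[\int G\,d(\mu\ast_\kappa\nu)=\iint G\big(\omega_{\leq\kappa}\ast_\kappa\omega'\big)\,\bar{\nu}_{X_\kappa(\omega)}(d\omega')\,\mu(d\omega).\]
The two ingredients that recover $\mu$ are the right-inverse identity $\omega=\omega_{\leq\kappa}\ast_\kappa\theta_\kappa(\omega)$ of Definition \ref{def:shift} and the defining property of $\bar{\nu}$: for a tail random variable $G$ the integrand collapses to $G(\omega')$, the inner integral becomes $\EE^\mu[G(\theta_\kappa)\mid X_\kappa]$, and the outer average returns $\EE^\mu[G(\theta_\kappa)]=\EE^\mu[G]$, which already suffices for the disintegrable half of Theorem \ref{thm:DPP}. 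The passage to arbitrary bounded measurable $G$ uses that $(\omega_{\leq\kappa},\theta_\kappa(\omega))\mapsto\omega$ together with \eqref{ite:shift-adapt} transports the $\sigma(X_\kappa)$-disintegration of $\mu$ back onto $\mu$. Compatibility of $\nu$ with $\mu$ at $\kappa$ is handled separately by the criterion \eqref{equ:crit-comp}, exactly as in the proof of Proposition \ref{pro:concat-mart}, since $\bar{\nu}_x(X_0=x)=1$.

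The main obstacle is the martingale step of the second paragraph. Its difficulty is twofold. One must preserve the \emph{canonical} local martingale property, namely reducibility by the specific space–time exit sequence $\tau_n^F$ of \eqref{equ:Fn}, and not merely the ordinary local martingale property, under both conditioning and shifting; this is exactly where Lemma \ref{lem:mart-char}, Lemma \ref{lem:increment}, the time-splitting of Lemma \ref{lem:tau-split}, and the uniform bound \eqref{equ:Mn} must be combined so that optional sampling remains valid after the shift. Secondly, a regular conditional probability is determined only up to a null set, so the real work is to isolate one version lying in $\bsP(x)$ for \emph{every} $x$; here the countability of $\sD$ and of the test families $\sX_n^{\geq 0}(F)$ is essential, as it confines the exceptional set to a single $\mu\circ X_\kappa^{-1}$-null set that can be overwritten without disturbing universal measurability.
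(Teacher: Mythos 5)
Your proposal follows essentially the same route as the paper: build $\bar{\nu}$ as a regular conditional probability of $\theta_\kappa$ given $X_\kappa$, reduce the canonical local-martingale property of $\bar{\nu}_x$ to the countable test family indexed by $\Qstop$ and $n$, transfer each test increment back to $\mu$ via the TC-morphism property and Lemma \ref{lem:increment}, use the canonical local bound $M_n$ to justify optional sampling, and absorb the countably many exceptional null sets into a single one that is overwritten by a measurable selector of $\bar{\sP}$. Two remarks. First, the stopping-time machinery you need for the pulled-back test functionals is Lemma \ref{lem:theta} (showing that $\kappa+\sigma\circ\theta_\kappa$ is a stopping time), not Lemma \ref{lem:tau-split}, which goes in the opposite direction (from a stopping time on the concatenated path to one on the second factor); the latter is the tool for concatenability, the former for disintegrability. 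Second, your verification of $\mu\ast_\kappa\nu=\mu$ is complete only for tail random variables $G$, where $G(\omega_{\leq\kappa}\ast_\kappa\omega')=G(\omega')$ collapses the inner integral; for arbitrary bounded measurable $G$ the asserted identity amounts to the conditional independence of $\omega_{\leq\kappa}$ and $\theta_\kappa(\omega)$ given $X_\kappa$ under $\mu$, and the sentence about ``transporting the $\sigma(X_\kappa)$-disintegration back onto $\mu$'' does not supply this. You are in good company --- the paper's own proof stops after establishing $\nu\in\sS(\sP)$ and never writes out the identity $\mu\ast_\kappa\nu=\mu$ either, and only the tail-variable version is used in Theorem \ref{thm:DPP} --- but as a freestanding proof of the proposition as stated, that step should either be restricted to tail functionals or argued via the conditional-independence property it implicitly requires.
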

\begin{proof}
Having fixed a shift operator $\theta$, we pick $\omega_0\in\Omega, \mu\in
\sP(\omega_0)$ and $\kappa\in\Stop$.
For a stopping time $\sigma\in\Qstop$ and define
\begin{align*}
  \sigma_n(\omega) &= (\sigma\wedge\tau_n)(\omega) \\
  \tau(\omega) &= \kappa(\omega) + \sigma( \theta_{\kappa}(\omega) )
\end{align*}
so that $\tau$ is a stopping time by Lemma \ref{lem:theta}.
Since $F$ is a TC-morphism into  $(\dr^0,\star)$
Lemma \ref{lem:increment} implies that
\begin{align*}
  F_\tau(\omega) - F_\kappa(\omega)
  &= F_{\kappa+\sigma_n(\theta_\kappa)}(\omega\ak\theta_\kappa\omega)-
  F_{\kappa}(\omega)
  = F_{\sigma_n}(\theta_\kappa\omega)=
  F^n_{\sigma}(\theta_\kappa\omega).
\end{align*}
The same Lemma implies that $\abs{F}$ is bounded by $\abs{F_\kappa}+M_n$ on
the entire stochastic interval $[0,\tau]$. In particular, for
$A_m=\set{\abs{F_\kappa}\leq m}$ we have
\[
  \ind{A_m} F_{\sigma_n} \circ \theta_\kappa =
\ind{A_m} \Big(F_{\tau}-F_{\kappa}\Big)  = \ind{A_m} \Big( F^{m+M_n}_{\tau}
- F^{m+M_n}_{\kappa} \Big).\]
Since $F^{m+M_n}$ is a bounded martingale under $\mu$, for any
bounded measurable function $H$ on $E$ we have
   $\int H(X(\omega_{\leq\kappa}))  \ind{A_m}(\omega)
   F^n_{\sigma}(\theta_{\kappa}\omega) \,\mu(d\omega) =0$, and,
given that $F^n$ is bounded, we can
pass to the limit $m\to\infty$ by the dominated convergence theorem to
obtain
    \begin{align}
    \label{equ:H}
       \int
       H(X(\omega_{\leq\kappa})) F^n_{\sigma}(\theta_{\kappa}\omega)
      \,\mu(d\omega) =0,
    \end{align}
for all bounded and measurable $H$.
With $\bar{\nu}_x$ denoting a version of the regular conditional distribution of
$\theta_{\kappa}$ given $X_{\kappa}=x$,  we then have
\begin{align*}
 0 =  \int
       H(X(\omega_{\leq\kappa})) F^n_{\sigma}(\theta_{\kappa}\omega)
 \mu(d\omega) =
  \iint H(x) F^n_{\sigma}(\omega') \, \bar{\nu}_x (d\omega') \, \mu_{X_{\kappa}}(dx),
\end{align*}
where $\mu_{X_{\kappa}}$ is the $\mu$-distribution of $X_{\kappa}$. Since
$H$ is arbitrary, it follows that
 \begin{align}
 \label{equ:DX}
    \int F^n_{\sigma}\, d\bar{\nu}_x =0 \efor \mu_{X_\kappa}\text{-almost all
   }x\in E,
 \end{align}
for all $\sigma\in\Qstop$ and all $n\in\N$.
Since $\Qstop$ is countable,
there exists a set $\sN_1\in\Borel(E)$ such that
$\mu_{X_\kappa}(\sN_1)=0$, and the equality in \eqref{equ:DX}
holds for all $x\in E\setminus\sN_1$ and $\sigma\in\Qstop$.
Therefore $\bar{\nu}_x\in \sM^{F,loc}$ for all $x\in E\setminus\sN_1$.

Since $\ast$ is a factor of $X$, we have
$X( T_{\kappa}(\omega) ) = X_0(\theta_{\kappa}(\omega))$ for all $\omega$,
and so
\begin{align*}
 1=\int \inds{X_{\kappa}(\omega)= X_0(\ogk)}\, \mu(d\omega) =
 \iint \inds{x = X_0(\omega')} \bar{\nu}_{x}(d\omega') \mu_{X_{\kappa}}(dx),
\end{align*}
This implies that there exists another zero set $\sN_2\in\Borel(E)$
such that $\mu_{X_\kappa}(\sN_2)=0$ and $X_0=x$, $\bar{\nu}_x$-a.s.
for all $x\in E\setminus\sN_2$.
Hence, $\bar{\nu}_x \in \bar{\sP}(x)$
(where $\bar{\sP}(x)$ is defined in \eqref{equ:spx})
for all $x\notin\sN_1\cup\sN_2$.
By picking a selector $\bar{\nu}'$ of $\bar{\sP}$
(which is nonempty by Proposition \ref{pro:DPP})
and using it to set the values of
$\bar{\nu}_x$ on $\sN_1\cup\sN_2$, we can arrange that $\bar{\nu}_x\in
\bar{\sP}(x)$, for all $x\in E$.
\end{proof}

\subsection{The main result for martingale-generated control
correspondences}
\begin{theorem} [DPP for martingale-generated control correspondences]
\label{thm:DPP-mart}
Let $\filt$ be a TC-space with concatenation operator $\ast$ and a
shift operator $\theta$.
Suppose that
$X$ is a state map from $\Omega$ to a Polish space $E$
such that $\ast$ is a factor of $X$, and
that $\sD$ is a countable collection of
canonically locally bounded
TC-morphisms from $(\Omega,\ast)$ into $(\dr^0,\star)$.
Let $\sP=\sP(\sD,X)$, i.e.,
\begin{align*}
   \bsP(x) &= \bigcap_{F\in \sD} \sM^{F,loc} \cap \Bsets{ \mu \in \prob(\Omega)}{
     X_0=x,\text{ $\mu$-a.s.}} \\
   \sP(\omega) &= \bsP(X(\omega)) \text{ for } \omega\in\Omega,
\end{align*}
let $G\in\slmz(\sP)$ be a tail random variable,
and let the value function $\bar{v}$ be given by
\begin{align*}
  \bar v(x) &= \sup_{\mu\in\bsP(x)} \int G\, d\mu.
\end{align*}
Then for all $\omega\in\Omega$, $x\in E$, and $\tau\in\Stop$ we have:
\begin{align*}
  \bar v(x) &= \sup_{\mu \in \bsP(x)}
  \int \Big(\bar v (X_\tau) \inds{\tau<\infty} + G \inds{\tau=\infty}\Big) \, d\mu
\end{align*}
\end{theorem}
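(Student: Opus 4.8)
The plan is to read this statement off from the abstract dynamic programming principle of Theorem \ref{thm:DPP}, transported through the state map $X$. All three hypotheses of that theorem have been arranged to hold for the correspondence $\sP=\sP(\sD,X)$: analyticity is Proposition \ref{pro:analy-mart} (here we use that $\sD$ is countable), concatenability is Proposition \ref{pro:concat-mart} (we use that each $F\in\sD$ is a canonically locally bounded TC-morphism into $(\dr^0,\star)$), and disintegrability is Proposition \ref{pro:dis} (we use the shift operator $\theta$ together with the hypothesis that $\ast$ is a factor of $X$). Since $G\in\slmz(\sP)$ is a tail random variable, every assumption of Theorem \ref{thm:DPP} is met.

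First I would apply both parts of Theorem \ref{thm:DPP} simultaneously. Concatenability yields inequality \eqref{equ:DPP-conc} and disintegrability yields the opposite inequality \eqref{equ:DPP-dist}, so together they give the pointwise identity
\begin{align*}
  v(\omega) = \sup_{\mu\in\sP(\omega)} \int \Big( v\circ T_\tau \inds{\tau<\infty} + G\inds{\tau=\infty}\Big)\, d\mu,
\end{align*}
valid for every $\omega\in\Omega$ and every $\tau\in\Stop$, where $v(\omega)=\sup_{\mu\in\sP(\omega)}\int G\, d\mu$.

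Next I would factor this identity through $X$. Since $\sP$ factors through $X$ by its very construction, so does its value function: with $\bar v(x)=\sup_{\mu\in\bsP(x)}\int G\, d\mu$ we have $v=\bar v\circ X$ and, because $X_\tau=X\circ T_\tau$, also $v\circ T_\tau=\bar v(X_\tau)$, while $\sP(\omega)=\bsP(X(\omega))$. For any $x\in\image X$, choosing $\omega$ with $X(\omega)=x$ and substituting into the pointwise identity produces exactly the asserted formula
\begin{align*}
  \bar v(x) = \sup_{\mu\in\bsP(x)} \int\Big(\bar v(X_\tau)\inds{\tau<\infty} + G\inds{\tau=\infty}\Big)\, d\mu.
\end{align*}
For $x\notin\image X$ the statement is vacuous: any $\mu\in\bsP(x)$ would satisfy $\mu(X_0=x)=1$, forcing $x\in\image X_0\subseteq\image X$ (as $X_0=X\circ T_0$ takes values in $\image X$), so $\bsP(x)=\emptyset$ and both sides equal $\sup\emptyset=-\infty$.

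The genuine work of this theorem has already been carried out in the three verification propositions, most substantially the concatenation computation of Proposition \ref{pro:concat-mart} and the shift-operator disintegration of Proposition \ref{pro:dis}. What remains is essentially bookkeeping, and the only point demanding a moment's care is the passage from the ``for every $\omega$'' form to the ``for every $x\in E$'' form on the whole of $E$ --- precisely the empty-$\bsP(x)$ edge case treated above.
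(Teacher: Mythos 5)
Your proposal is correct and follows exactly the route of the paper's own (two-line) proof: invoke Propositions \ref{pro:analy-mart}, \ref{pro:concat-mart} and \ref{pro:dis} for analyticity, concatenability and disintegrability, then apply Theorem \ref{thm:DPP} and factor through the state map $X$. In fact you supply more detail than the paper does, notably the harmless edge case $\bsP(x)=\emptyset$ for $x\notin\image X$, which the paper leaves implicit.
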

\begin{proof}
Use Propositions \ref{pro:analy-mart}, \ref{pro:concat-mart},
and \ref{pro:dis} to get the analyticity, concatenability, and disintegrability
(respectively) of the control correspondence $(\sD,X)$.
Then apply Theorem \ref{thm:DPP}.
\end{proof}

\section{Application 1 - Controlled Diffusions in the Weak Formulation}
\label{sec:app1}
\subsection{Problem formulation and the main result}\

Throughout this section we fix the following:
\begin{enumerate}
  \item a nonempty open set $\sO$ in $\R^n$ and set $E=\Cl \sO$ (the
  \define{state space}),
  \item a nonempty standard Borel space $A$, (the \define{control space}),
  \item Borel measurable functions $\beta:E\times A \to \R^n$ and
  $\sigma:E\times A \to \R^{n\times n}$ (the \define{coefficients}),
  \item a Borel measurable function $g:E\to [-\infty,\infty]$ (the
  \define{objective function}).
\end{enumerate}

We remind the reader that $\cep$ denotes the set of all
continuous trajectories with values in $E$ that get absorbed once they hit
the boundary $\partial \sO$.

\subsubsection{Weak solutions to controlled SDEs}
With Einstein's convention of summation
over repeated indices used throughout,
we start by making precise what we mean by a controlled diffusion.
\begin{definition}[Weak solutions to controlled SDEs]
\label{def:cont-dif}
A probability measure $\mu$ on $\cep$ is said to be a \define{weak
solution of the controlled SDE}
 \begin{align}
 \label{equ:SDE}
    d \xi^i_t = \beta^i(\xi_t,\alpha_t)\, dt + \sigma^{i}_k (\xi_t,\alpha_t)\,
   dW^k_t,\ \xi_0=x,
 \end{align}
\define{with absorption in $\partial\sO$}  - denoted by
 $\mu\in\sL^x(\beta,\sigma)$ -
 if there exists filtered probability space $(\Omega,\sF,\prfi{\sF_t},\PP)$
 on which three stochastic process $\prfi{W_t}$, $\prfi{\xi_t}$ and
 $\prfi{\alpha_t}$ are defined, such that:
   \begin{enumerate}
     \item $W$ is an $\R^n$ valued $\prfi{\sF_t}$-Brownian motion,
     \item $\xi$ is adapted and $\xi(\omega)\in\cep$ for all $\omega$,
     \item $\alpha$ is $A$-valued and progressively measurable,
     \item $\int_0^t \abs{\beta^i(\xi_u,\alpha_u)}\, du + \int_0^t
(\sigma^i_k(\xi_u,\alpha_u))^2\, du < \infty$, a.s.
for all $i,k$ and  $t\geq 0$,
  \item $\xi_{t} = x+\int_0^t \beta^i(\xi_u, \alpha_u)\, du + \int_0^t
  \sigma^i_k(\xi_u, \alpha_u)\, dW^k_t$, a.s.,  for all $t \in [0,\tau_{\partial\sO}]$,
  where \[ \tau_{\partial\sO} = \inf\sets{t\geq 0}{\xi_t\in\partial\sO}, \eand \]
  \item $\mu$ is the law of $\xi_{\cdot}$ on $\cep$.
 \end{enumerate}
 \end{definition}

\subsubsection{The stochastic optimal control problem}
Given $x\in E$ and $\mu\in\sL^x(\beta,\sigma)$, we set
\begin{align}
\label{equ:J}
J(\mu) = \EE^{\mu}[ G(\xi)] \ewhere
G(\xi) = \liminf_{t\to\infty} g(\xi_t),
\end{align}
with $\xi$ denoting the coordinate map on $\cep$,
where we assume that $g$ is such that
$\EE^{\mu}[ G^+(\xi)]<\infty$ for all $\mu \in
\cup_{x\in E} \sL^x(\beta,\sigma)$.
The \define{value function} of the associated
control problem is then given by
\begin{align}
\label{equ:v}
v(x) = \sup_{\mu\in\sL^x(\beta,\sigma)} J(\mu), \ x\in E.
\end{align}
\begin{remark}
\label{rem:time-dependent}
By choosing the state process $\xi$ appropriately,
this setup includes various common formulations of optimal stochastic
control, including problems on a finite horizon (when $E=E_0 \times [0,T]$
and the last component plays the role of time) with terminal and/or running
costs, discounted problems and stationary problems.
\end{remark}

\subsubsection{DPP for controlled diffusions}
\begin{theorem}[A dynamic programming principle for controlled diffusions
- the weak formulation]
\label{thm:dpp-cont-diff}
Suppose that,
\begin{enumerate}
\item  \label{ite:dcf1}
there exist locally bounded real functions
$\hat{\beta}:E\to\R$ and $\hat{\sigma}:E\to\R$ such that
\[ \sabs{\beta^i(x,\alpha)}
\leq \hat{\beta}(x) \eand \sabs{\sigma^i_k(x,\alpha)} \leq
\hat{\sigma}(x) \eforall \alpha\in A,\]
\item \label{ite:dcf2}
for each
$x\in E$ we have $\sL^x (\beta,\sigma)\ne \emptyset$,  and
\item \label{ite:dcf3} $J(\mu)>-\infty$ for each $\mu\in
\sL^{x}(\beta,\sigma)$.
\end{enumerate}
  Then, the value function $v:E\to (-\infty,\infty]$
 is universally measurable and
satisfies the dynamic programming principle:
\[ v(x) = \sup_{\mu\in\sL^x(\beta,\sigma)} \EE^{\mu}[ v(\xi_{\tau}) \inds{\tau<\infty} +
G(\xi_{\cdot}) \inds{\tau=\infty} ], \eforall x\in E,\]
for each (raw) stopping time $\tau$ on $\cep$.
\label{thm:DPP-diff}
\end{theorem}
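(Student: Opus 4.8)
The plan is to recognize the controlled-diffusion problem as a martingale-generated control correspondence and to invoke Theorem \ref{thm:DPP-mart}, obtaining the stated DPP on $\cep$ by projecting out the control at the very end. First I would take the underlying TC-space to be the product $\Omega = \cep \times \lzer_A$, where $\cep$ carries the strict concatenation $\bullet$ (it is a TC-subspace of $C_E$ by Proposition \ref{pro:sub} and Example \ref{exa:subspaces}, being the space of paths absorbed on the closed set $\partial\sO$) and the control space $\lzer_A$ carries its natural concatenation from subsection \ref{sss:exam-TC}, with the product compatibility inherited from $\cep$. On this product I use the current-position state map $X(\omega,\alpha)=\liminf_{t\to\infty}\omega(t)$, for which $X_t(\omega,\alpha)=\omega(t)$ exactly as in subsection \ref{par:strict-conc}; since strict-concatenation compatibility is precisely $\omega(t)=\omega'(0)$, the operator $\ast$ is a factor of $X$ in the sense of Definition \ref{def:comp-X}, as required by Theorem \ref{thm:DPP-mart}. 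The shift operator $\theta$ is the coordinatewise time-shift $\theta_t(\omega,\alpha)=(\omega(t+\cdot),\alpha_{t+\cdot})$, whose two axioms in Definition \ref{def:shift} I would check directly; absorption causes no trouble, since shifting an absorbed path past its absorption time yields a constant path, which remains in $\cep$.

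Next I would produce the martingale-generating family $\sD$. Fix a countable family $\{f_j\}$ of $C^2$ functions that is determining for the controlled martingale problem on $E$, and define $F^{j}:\Omega\to\dr^0$ by
\[ F^{j}(\omega,\alpha)_t = f_j(\omega(t\wedge\tau)) - f_j(\omega(0)) - \int_0^{t\wedge\tau}\Big(\beta^i(\omega(s),\alpha_s)\,\partial_i f_j(\omega(s)) + \tfrac12 (\sigma\sigma^{\top})^{ik}(\omega(s),\alpha_s)\,\partial^2_{ik} f_j(\omega(s))\Big)\,ds, \]
with $\tau=\tau_{\partial\sO}$. Each $F^{j}$ is non-anticipating and starts at $0$; because both the test-function increment and the integrand depend only on the portion of $(\omega,\alpha)$ traversed after the splitting time, a direct computation using the compatibility $\omega(t)=\omega'(0)$ gives $F^{j}\big((\omega,\alpha)\ast_t(\omega',\alpha')\big)=F^{j}(\omega,\alpha)\star_t F^{j}(\omega',\alpha')$. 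This is precisely why the \emph{adjusted} concatenation $\star$ on $\dr^0$ (rather than the strict one) is the correct target, and it makes each $F^{j}$ a TC-morphism into $(\dr^0,\star)$ in the sense of Definition \ref{def:TC-morph}. Canonical local boundedness is automatic: $F^{j}$ is continuous, hence jump-free, so the stopped processes satisfy $\babs{(F^{j})^n}\leq n$.

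With $\sD=\{F^{j}\}$ in hand, the central identification is that
\[ \bsP(x)=\bigcap_j \sM^{F^{j},loc}\cap\bsets{\mu}{X_0=x,\ \mu\text{-a.s.}} \]
coincides with the set of joint laws of $(\xi,\alpha)$ that are weak solutions of \eqref{equ:SDE} started at $x$ with absorption in $\partial\sO$. This is the controlled analogue of the Stroock--Varadhan martingale characterization, and the reduction to the countable family $\{f_j\}$ is the step I expect to be the \textbf{main obstacle}: one must verify that the two-sided local-martingale conditions for countably many $f_j$, together with the initial-position constraint, force the existence of an $\R^n$-Brownian motion realizing \eqref{equ:SDE}, and that the exit time $\tau_{\partial\sO}$ is handled correctly at the boundary. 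Here assumption \eqref{ite:dcf1} enters, supplying the local integrability the martingale problem needs, and this is where I would follow the arguments of \cite{ElKTan13,ElKTan13a}. Assumption \eqref{ite:dcf2} makes $\sP$ non-empty-valued, so it is a genuine control correspondence.

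Finally I would invoke Theorem \ref{thm:DPP-mart}. The payoff $G(\omega,\alpha)=\liminf_{t\to\infty} g(\omega(t))$ depends only on the diffusion coordinate and is a tail random variable; the standing assumption $\EE^{\mu}[G^+]<\infty$ gives $G\in\slmz(\sP)$, while assumption \eqref{ite:dcf3} keeps the value function above $-\infty$. Theorem \ref{thm:DPP-mart} then yields, for the value function $\bar v$ on $E$ and every $\tau\in\Stop$,
\[ \bar v(x)=\sup_{\mu\in\bsP(x)}\int\big(\bar v(X_\tau)\inds{\tau<\infty}+G\inds{\tau=\infty}\big)\,d\mu, \]
and $\bar v$ is universally measurable by Proposition \ref{pro:DPP}. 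Since both $G$ and $X_\tau$ involve only the $\xi$-coordinate, and since every element of $\sL^x(\beta,\sigma)$ is the $\xi$-marginal of some element of $\bsP(x)$ and conversely, the supremum over joint laws equals the supremum over $\mu\in\sL^x(\beta,\sigma)$ of $\EE^{\mu}[v(\xi_\tau)\inds{\tau<\infty}+G\inds{\tau=\infty}]$, with $v=\bar v$. This is exactly the asserted DPP on $\cep$, and the universal measurability of $v$ follows from that of $\bar v$.
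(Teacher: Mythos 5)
Your proposal follows essentially the same route as the paper: build the universal product space $\lzer_A\times\cep$, recast $\sL^x(\beta,\sigma)$ as a martingale-generated control correspondence via the countable test-function family (the paper's Proposition \ref{pro:mart-form}, which it proves by adapting the standard Stroock--Varadhan argument plus a lemma on progressive representatives of $\lzer_A$-valued controls), verify that each $F^f$ is a canonically locally bounded TC-morphism into $(\dr^0,\star)$, and apply Theorem \ref{thm:DPP-mart}. The one step you flag as the main obstacle --- the equivalence between the countable martingale conditions and weak solvability of \eqref{equ:SDE} with absorption --- is exactly the step the paper isolates and handles, so your outline is correct and matches the paper's proof.
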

\begin{remark}\
\begin{enumerate}
\item
Condition \eqref{ite:dcf1} in Theorem \ref{thm:dpp-cont-diff} is far from
necessary. It is there to ensure existence and is placed mostly for convenience. It can be replaced by a
different condition or relaxed by choosing a different control part
$\Omega^{\alpha}$ of the universal space $\Oax$ in the proof below.
\item A very important feature of our control problem is that the law of
the controlled process depends on the process $\alpha$ only through its
Lebegue-a.e.-equivalence class (as a function of $t$), i.e., it is enough
to think of $\alpha$ as an $\lzer_{A}$-random variable. This feature which
is rarely stressed in the literature, allows us to construct a Polish setup
for the problem, and consequently, prove the DPP.
\end{enumerate}
\end{remark}
\subsection{Proof of Theorem \ref{thm:dpp-cont-diff}}\
\label{sse:prf-dpp-diff}

Our proof of Theorem \ref{thm:DPP-diff} consists of two steps. In the first
one, we observe that the family $\sL^x(\beta,\sigma)$ can be manufactured
by varying admissible controls on a single, universal, filtered
probability space, and that it admits a martingale
characterization there. In the second one we show that this equivalent
setup fits our abstract framework of Section
\ref{sec:mart-gen} so that Theorem
\ref{thm:DPP-mart} can be applied.

\subsubsection{Construction of a universal setup}
\label{sss:const}
Let $\Omega^{\alpha}=\lzer_A$ be the space of all
Lebesgue-a.e equivalence classes of $A$-valued Borel functions from
$[0,\infty)$ to $A$,
and let $\Omega^{\xi}$ be the subspace $\cep$ of the
canonical space $C_{\R^n}$.
Both can be given the structure of a filtered
measurable space, namely $\filta$, $\filtx$,
as described in more detail in subsection \ref{sss:exam-T}
and in Example \ref{exa:subspaces}.
We define the (universal) filtered measurable space
$\filtax$ simply as their product.
In particular $\sF^{\alpha\xi}_t = \sF^{\alpha}_t \otimes \sF^{\xi}_t$.
It will be used in the second step that $\Oax$ is, in fact,
a T-space - the product of T-spaces $\Omega^{\alpha}$ and $\Omega^{\xi}$.

Let $\Coord = \sets{ x_i, x_i x_j }{1\leq i,j \leq n}$ be the family of
coordinate functions and their products on $\R^n$, and let $\QCoord$ denote
an arbitrary, but fixed throughout, countable family of
bounded $C^2$-functions on $\R^n$ such that for each $f\in \Coord$ and each compact set
$K\subseteq \R^n$ there exists $\tf\in \QCoord$ such that $f=\tf$ on $K$.
Also, for $f\in C^2$ and $a\in A$ we define the $\sGa f$ by
    \[ (\sGa f)(x) = \beta^i(x,a) \partial_i f(x) +
    \tot \gamma^{ij}(a,x) \partial_{ij} f(x), \ewith \gamma^{ij} = \tsum_k
    \sigma^i_k \sigma^j_k, \]
\begin{proposition}[A martingale characterization of weak solutions to
controlled SDEs]
\label{pro:mart-form}
The following two statements are equivalent for a
probability measure $\mu$ on $\cep$:
\begin{enumerate}
  \item \label{ite:isweak} $\mu$ is a weak solution to the controlled SDE \eqref{equ:SDE} with
  absorption at $\partial \sO$ starting at $x$, and
  \item \label{ite:mart} there exists a probability measure $\bar{\mu}$ on $\Oax$
  whose $\Omega^{\xi}$-marginal is $\mu$ such that
  \begin{enumerate}
    \item \label{ite:a} $\xi_0=x$, $\bar{\mu}$-a.s.,
    \item \label{ite:b}
       $\int_0^t \abs{\beta^i(\xi_u,\alpha_u)}\, du + \int_0^t
(\sigma^i_k(\xi_u,\alpha_u))^2\, du < \infty$ for all $i,k$ and $t\in
[0,\tau_{\partial \sO}]$, $\bar{\mu}$-a.s., and
      \item  \label{ite:c}
    for each $f\in \QCoord$,
      $f(\xi_t) - f(\xi_0) - \int_0^{t\wedge \tau_{\partial \sO}}
      \sG^{\alpha_u} f
      (\xi_u)\, du$
      is an $(\prfi{\sF^{\alpha\xi}_t},\bar{\mu})$-local martingale.
\end{enumerate}
\end{enumerate}
If \eqref{ite:isweak} holds, then \eqref{ite:c} is true for all $f\in C^2(E)$.
\end{proposition}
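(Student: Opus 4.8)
I would prove the two implications separately, using Itô's formula for $(1)\Rightarrow(2)$ (which simultaneously yields the final $C^2$ claim) and a covariation-plus-representation argument for $(2)\Rightarrow(1)$.

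\textbf{The direction $(1)\Rightarrow(2)$ and the final claim.} Given a weak solution $\mu\in\sL^x(\beta,\sigma)$, realized by processes $W,\xi,\alpha$ on some $(\Omega,\sF,\prfi{\sF_t},\PP)$, I would take $\bar\mu$ to be the law of the $\Oax$-valued random variable $\omega\mapsto(\alpha_\cdot(\omega),\xi_\cdot(\omega))$; its $\Omega^\xi$-marginal is the law of $\xi$, namely $\mu$, and properties (a), (b) pass over verbatim from Definition \ref{def:cont-dif}. For (c), fix any $f\in C^2(E)$ and apply Itô's formula to $f(\xi_t)$ on $[0,\tau_{\partial\sO}]$ using \eqref{equ:SDE}. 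Since the SDE gives $d\langle\xi^i,\xi^j\rangle_u=\gamma^{ij}(\xi_u,\alpha_u)\,du$, the finite-variation terms collect into exactly $\int_0^{t\wedge\tau_{\partial\sO}}(\sG^{\alpha_u}f)(\xi_u)\,du$, leaving
\[ f(\xi_t)-f(\xi_0)-\int_0^{t\wedge\tau_{\partial\sO}}(\sG^{\alpha_u}f)(\xi_u)\,du=\int_0^{t\wedge\tau_{\partial\sO}}\partial_i f(\xi_u)\,\sigma^i_k(\xi_u,\alpha_u)\,dW^k_u, \]
whose right-hand side is a local martingale. Because $\xi$ is absorbed in $\partial\sO$, both sides are constant after $\tau_{\partial\sO}$, so the identity holds for all $t$. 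This gives (c) for \emph{every} $f\in C^2(E)$ — in particular for $f\in\QCoord$ — which is precisely the last sentence of the proposition.

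\textbf{The direction $(2)\Rightarrow(1)$.} Now assume $\bar\mu$ satisfies (a)--(c). The first step is to upgrade (c) from $\QCoord$ to the finite family $\Coord=\sets{x_i,x_ix_j}{1\le i,j\le n}$. I would do this by localization: for $f\in\Coord$ and $\rho_m=\inf\sets{t}{\abs{\xi_t}\ge m}$, choose $\tf\in\QCoord$ agreeing with $f$ on $\set{\abs{x}\le m}$; the processes built from $f$ and from $\tf$ then coincide up to $\rho_m\wedge\tau_{\partial\sO}$, so the local-martingale property for $\tf$ transfers to $f$ stopped at $\rho_m$, and $m\to\infty$ (the $\rho_m$ exhaust $[0,\infty)$ since $\xi$ is continuous) yields (c) for each $f\in\Coord$. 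Applying (c) to $f=x_i$ shows that $M^i_t:=\xi^i_t-\xi^i_0-\int_0^{t\wedge\tau_{\partial\sO}}\beta^i(\xi_u,\alpha_u)\,du$ is a continuous local martingale under $\bar\mu$, and applying (c) to $f=x_ix_j$ and subtracting the contribution of the Itô product rule for $\xi^i\xi^j$ (i.e.\ comparing finite-variation parts) identifies the covariation
\[ \langle M^i,M^j\rangle_t=\int_0^{t\wedge\tau_{\partial\sO}}\gamma^{ij}(\xi_u,\alpha_u)\,du=\int_0^{t\wedge\tau_{\partial\sO}}(\sigma\sigma^\top)^{ij}(\xi_u,\alpha_u)\,du. \]
With the diffusion matrix thus factored through $\sigma\sigma^\top$, the standard martingale-representation lemma for continuous local martingales — applied on a suitably enlarged probability space carrying an auxiliary independent Brownian motion to cover directions in which $\sigma$ degenerates — produces an $\R^n$-valued Brownian motion $W$ with $M^i_t=\int_0^{t\wedge\tau_{\partial\sO}}\sigma^i_k(\xi_u,\alpha_u)\,dW^k_u$. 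Unwinding the definition of $M^i$ recovers the defining integral equation of Definition \ref{def:cont-dif} on $[0,\tau_{\partial\sO}]$; the remaining requirements hold by construction together with (b), and the law of $\xi$ is the $\Omega^\xi$-marginal $\mu$. Hence $\mu\in\sL^x(\beta,\sigma)$.

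\textbf{Main obstacle.} The delicate point is the representation step: since $\sigma$ is neither assumed square nor non-degenerate, it cannot simply be inverted, so $W$ must be manufactured on an enlarged space — this is where the weak nature of the solution is genuinely exploited, and where one must verify that the enlargement preserves adaptedness so that the reconstructed $(W,\xi,\alpha)$ forms a bona fide weak solution. The localization upgrade from $\QCoord$ to $\Coord$ and the consistent stopping at $\tau_{\partial\sO}$ throughout are routine but must be tracked carefully, since every martingale statement lives only on the stochastic interval $[0,\tau_{\partial\sO}]$.
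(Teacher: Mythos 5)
Your two implications are, step for step, the classical equivalence between weak solutions and the local martingale problem (It\^o's formula in one direction; drift/covariation identification plus martingale representation on an enlarged space in the other), and this is exactly the argument the paper invokes: its proof consists of citing \cite[Proposition 4.6, p.~315]{KarShr91} and omitting those details. So on the SDE side your route coincides with the paper's.

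There is, however, a genuine gap, and it sits precisely on the one point the paper does write out. On $\Oax$ the control coordinate is $\lzer_A$-valued: $\alpha(\omega)$ is a Lebesgue-a.e.\ equivalence class of Borel functions, not a stochastic process, so the symbol $\alpha_u(\omega)$ has no pointwise meaning. Your proof uses it as if it did, in three places. First, in $(1)\Rightarrow(2)$ you declare $\bar\mu$ to be the law of $\omega\mapsto(\alpha_\cdot(\omega),\xi_\cdot(\omega))$; for this to define a measure on $\Oax$ you must check that this map is measurable into the Polish space $\lzer_A$, which is the content of the second half of the lemma the paper proves alongside the proposition. Second, in $(2)\Rightarrow(1)$ the weak solution you reconstruct must satisfy condition (3) of Definition \ref{def:cont-dif}, i.e.\ $\alpha$ must be an $A$-valued \emph{progressively measurable process}; the coordinate map on $\Omega^{\alpha}$ is not one, so you need to manufacture a progressive version $\hat\alpha$ with $\hat\alpha(\omega)$ a representative of $\alpha(\omega)$ for every $\omega$ --- the paper does this by setting $\hat\alpha_t=\phi^{-1}\bigl(\liminf_{n} n\int_{(t-1/n)^+}^{t}\phi(\alpha_u)\,du\bigr)$ for a Borel isomorphism $\phi:A\to[-1,1]$. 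Third, the same version is needed even to read conditions (b) and (c): the integrands $\beta^i(\xi_u,\alpha_u)$ and $\sG^{\alpha_u}f(\xi_u)$ must be jointly measurable and $\FFF^{\alpha\xi}$-progressive for the integrals to define adapted processes to which your localization and covariation arguments can be applied. None of this is difficult, but it is, in the paper's own words, the only observation that needs to be made, and your proposal skips it entirely while spelling out the parts the paper is content to cite.
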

The proof follows, almost verbatim, the steps in the standard
proof of the equivalence in the non-controlled case
(see, e.g., Proposition 4.6, p.~315, \cite{KarShr91}) so we omit the
details. The only observation
that needs to be made is that $\alpha$ is not a stochastic process in the
classical sense. This difficulty can be circumvented by considering
appropriate versions as in the following lemma. We remind the reader that an $A$-valued process
$\prfi{\hat{\alpha}_t}$
is considered progressively measurable if $\prfi{\phi(\hat{\alpha}_t)}$ is
progressively measurable for each Borel measurable $\phi:A\to [-1,1]$.

\begin{lemma}
There exists an $\prfi{\sF^{\alpha\xi}_t}$-progressively measurable process
$\prfi{\hat{\alpha}_t}$ with values in $A$ such that
$\{\hat{\alpha}_t(\omega)\}_{t\geq 0}$ is a Leb-a.e.-representative of the
coordinate map $\alpha(\omega)$ for each $\omega$.

Conversely, let $(\xi,\alpha)$ be a pair
consisting of a continuous process $\xi$
with values in $\R^n$
and an $A$-valued progressive process $\alpha$ defined on some  filtered probability
space $(\Omega,\prfi{\sF_t},\FF,\PP)$. Then $(\xi,\alpha)$  admits an
$\Oax$-distribution, i.e., a probability measure $\bar{\mu}$
on $\Oax$ such that the $\PP$-distribution of
$\int_0^t \vp(u,\xi_u,\alpha_u)\, du$  coincides with the
$\bar{\mu}$-distribution of  $\int_{[0,t]} \vp(u,\alpha,\xi)\, d\ld$, for
each bounded and measurable $\vp$ and all $t\geq 0$.
\end{lemma}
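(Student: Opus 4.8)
The plan is to obtain the forward claim by a Lebesgue-differentiation construction and the converse by a direct push-forward, once the relevant measurability is in place.

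For the first part, fix the bimeasurable isomorphism $\phi:A\to[-1,1]$ used to define $\Omega^{\alpha}=\lzer_A$ in subsection \ref{sss:exam-T}, and for $\omega\in\Omega^{\alpha}$ write $\psi_{u}(\omega)=\phi(\alpha_{u}(\omega))\in[-1,1]$ for a representative of the coordinate equivalence class. I would set
\begin{align*}
  \hat\alpha_{t}(\omega)=
  \begin{cases}
    \phi^{-1}\big(L_{t}(\omega)\big), & L_{t}(\omega):=\lim_{h\downarrow0}\tfrac1h\int_{(t-h)^{+}}^{t}\psi_{u}(\omega)\,du \text{ exists in }[-1,1],\\
    \phi^{-1}(0), & \text{otherwise.}
  \end{cases}
\end{align*}
Each average $\tfrac1h\int_{(t-h)^{+}}^{t}\psi_{u}\,du$ depends on the equivalence class only, is continuous in $t$, and, via an estimate of the form $\int_a^b\abs{\phi(\alpha_u)-\phi(\alpha'_u)}\,du\le C_{a,b}\,d(\alpha,\alpha')$ on bounded intervals, is $d$-continuous in $\omega$; hence $(t,\omega)\mapsto\hat\alpha_{t}(\omega)$ is jointly measurable. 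Since for $s\le t$ the window $[(s-h)^{+},s)$ lies in $[0,t)$, the restriction to $[0,t]\times\Omega^{\alpha}$ is $\Borel([0,t])\otimes\sigma(T_{t})$-measurable, so $\hat\alpha$ is $\prfi{\sF^{\alpha}_t}$-progressive; pulling back through the projection $\Oax\to\Omega^{\alpha}$ renders it $\prfi{\sF^{\alpha\xi}_t}$-progressive. Finally, the Lebesgue-point theorem gives $L_{t}(\omega)=\psi_{t}(\omega)$ for $\ld$-a.e.\ $t$ and every fixed $\omega$, so $\hat\alpha_{\cdot}(\omega)$ is an a.e.\ representative of $\alpha(\omega)$. (Progressivity of $\phi\circ\hat\alpha$ suffices for $A$-valued progressivity, since any Borel $\vp:A\to[-1,1]$ factors as $(\vp\circ\phi^{-1})\circ\phi$ with $\vp\circ\phi^{-1}$ Borel.)

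For the converse I would take as candidate the law of the sample-path map. Define $\Phi:\Omega\to\Oax$ by $\Phi(\omega)=\big([\alpha_{\cdot}(\omega)],\,\xi_{\cdot}(\omega)\big)$, where $[\,\cdot\,]$ denotes the $\ld$-a.e.\ equivalence class. The continuous adapted process $\xi$ gives a measurable map $\omega\mapsto\xi_{\cdot}(\omega)$ into $\Omega^{\xi}$, while progressivity of $\alpha$ yields joint measurability of $(u,\omega)\mapsto\phi(\alpha_{u}(\omega))$, whence $\omega\mapsto\int_{0}^{\infty}\zeta_{u}\,\phi(\alpha_{u}(\omega))\,\hat\ld(du)$ is measurable for every bounded $\zeta$; as these functionals generate $\Borel(\lzer_A)$, the map $\omega\mapsto[\alpha_{\cdot}(\omega)]$, and hence $\Phi$, is measurable. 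I then set $\bar\mu=\Phi_{*}\PP$, a Borel probability on $\Oax$ whose $\Omega^{\xi}$-marginal is the law of $\xi$.

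It remains to verify the distributional identity. For bounded measurable $\vp$ and $t\ge0$ put $Y_{t}(\eta,w)=\int_{[0,t]}\vp(u,\eta_{u},w_{u})\,d\ld$ on $\Oax$; this is Borel by a standard monotone-class argument and, crucially, depends on $\eta$ only through its equivalence class, the integral being insensitive to $\ld$-null modifications. Consequently $Y_{t}\circ\Phi(\omega)=\int_{0}^{t}\vp(u,\alpha_{u}(\omega),\xi_{u}(\omega))\,du=:X_{t}(\omega)$ holds pointwise, so that for every bounded Borel $\Psi$,
\[
  \EE^{\bar\mu}\big[\Psi(Y_{t})\big]=\EE^{\PP}\big[\Psi(Y_{t}\circ\Phi)\big]=\EE^{\PP}\big[\Psi(X_{t})\big],
\]
which is exactly the claimed coincidence of the $\PP$-law of $\int_0^t\vp(u,\xi_u,\alpha_u)\,du$ with the $\bar\mu$-law of $\int_{[0,t]}\vp(u,\alpha,\xi)\,d\ld$ (the order of the arguments of $\vp$ being immaterial). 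The main obstacle is the first part: producing a single process that is simultaneously a genuine $\ld$-a.e.\ representative of the equivalence-class-valued coordinate map, for \emph{every} $\omega$, and progressively measurable. The left-sided averaging resolves both demands at once---Lebesgue points deliver the representative property while the backward-looking window secures adaptedness---and is the only nonroutine ingredient; the converse is then bookkeeping around representative-independence of time integrals.
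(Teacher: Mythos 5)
Your proof is correct and follows essentially the same route as the paper's: the forward direction via backward-looking averages $\tfrac1h\int_{(t-h)^+}^t\phi(\alpha_u)\,du$ (the paper takes a $\liminf$ along $h=1/n$ rather than your limit-with-fallback, a cosmetic difference), with progressivity obtained from the averages being continuous adapted processes and the representative property from Lebesgue differentiation; and the converse via the push-forward of $\PP$ under the sample-path map $\Phi$, using that the time-integral functionals depend on the $\lzer_A$-coordinate only through its equivalence class. Your write-up merely supplies a few measurability details the paper leaves implicit.
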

\begin{proof}
Let $\phi$ be an isomorphism (a bimeasurable bijection) between $A$ and the closed
interval $[-1,1]$. Given $\alpha(\omega) \in \lzer([0,\infty),A)$, we define
$\hat{\alpha}$ by
\[ \hat{\alpha}(t) = \phi^{-1}\Big(\liminf_{n\to\infty} \Phi^n_t(\omega)
\Big) \ewhere
\Phi^n_t(\omega)=   \oo{n}
\int_{(t-1/n)^+}^t \phi\big(\alpha_u(\omega)\big)\, du.\]
It is straightforward to check that $\hat{\alpha}(\omega)$
is a representative of $\alpha(\omega)$ for each $\omega$. Moreover
$\phi(\hat{\alpha})$ (and, therefore, $\alpha$) is
a progressively-measurable process, as a pointwise limit of continuous
adapted processes.

For the converse, and
under the assumptions of the second part of the Lemma, let $\bar{\mu}$ be
the pushforward of $\PP$ via the map $\Phi:\Omega \to \Oax$
defined as follows:
\[ \Phi(\omega) = \Big( (\xi_t(\omega))_{t\geq 0}, \alpha(\omega)  \Big),\]
where $\alpha(\oemga)$ is the Leb-a.e.-equivalence class of
$(\alpha_t(\omega))_{t\geq 0}$. (Progressive) measurability of $\alpha$ guarantees
that $\Phi$ is a measurable map. The equality of the distributions of two
integrals in the statement is then a simple consequence of the
monotone-class theorem.
\end{proof}

\subsubsection{An application of the abstract DPP}
\label{sss:abst-visc}
Proposition \ref{pro:mart-form} allows us to reformulate our control
problem so as to fit the setting of the first part of our paper. Indeed, it
states that the value function $v(x)$ can be represented as
\[ v(x) = \sup_{\bar{\mu}\in\bar{\sP}(x)} \EE^{\mu}[ G(\xi) ]\]
where $\bar{\sP}(x)$ is the family of all probability measures
on $\Oax$ such that \eqref{ite:a}, \eqref{ite:b} and
\eqref{ite:c} hold, and our job is to show that it is, in fact, a
martingale generated control correspondence which satisfies all the requirements
of the abstract Theorem \ref{thm:DPP-mart}.

Thanks to the discussion and examples in subsections \ref{sss:categ} and
\ref{sss:exam-TC}, the space $\Oax$ admits a natural structure of a
TC-space, with the strict concatenation used for the $\xi$ component. The
map $X:\Oax\to E$, given by $X(\xi,\alpha) = \liminf_{t\to\infty} \xi_t$
computed componentwise, and
suitably measurably altered to take values in $E$ and when the limits
inferior take infinite values, so that $X_t(\xi,\alpha) = \xi_t$.
Given that the concatenation operator in $\alpha$ requires no
compatibility conditions, and the one in $\xi$ is strict, the product
concatenation operator $\ast$ factors through $X$ (and is a factor of $X$).
Also, there is a naturally-defined shift operator $\theta$ on $\Oax$.

Condition \eqref{ite:dcf1} of Theorem \ref{thm:dpp-cont-diff} takes
care of the integrability condition \eqref{ite:b} of Proposition
\ref{pro:mart-form}, so we can conclude that
we are, indeed,  dealing with a martingale-generated control correspondence
with the state map $X$, generated by the family $\sD$ which consists of
(well-defined) maps
of the form
\[ F( \alpha,\xi )_t =
f(\xi_t) - f(\xi_0) - \int_0^{t\wedge \tau_{\partial \sO}}
\sG^{\alpha_u}f(\xi_u)\, du\]
with $f$ ranging through the countable set $\QCoord$.
The last thing we need to check, before we can apply Theorem
\ref{thm:DPP-mart}, is that each such $F$ is a TC-morphism into
$(\dr^0,\star)$. We fix $f\in \QCoord$, and note that the corresponding
functional $F$ clearly takes values in $\dr^0$ and that it is
non-anticipating. To establish the TC-morphism property let us fix
$s,t\in\Time$ and $\omega,\omega'\in\Oax$ such that
$\omega$ is compatible with $\omega'$ at $t$.
The case of $s\leq t$ is straightforward, so suppose $s>t$.
Since the $\xi$ component uses the strict concatenation operator,
then $\xi_t(\omega)=\xi_0(\omega')$, and furthermore:
\begin{align*}
  \tau_{\partial\sO}(\omega)\leq t
  &\quad\Leftrightarrow\quad
  \xi_t(\omega)\in\partial\sO
  \quad\Leftrightarrow\quad
  \xi_0(\omega')\in\partial\sO
  \quad\Leftrightarrow\quad
  \tau_{\partial\sO}(\omega')=0
\end{align*}
Combining this with the properties of concatenation gives:
\begin{align*}
\int_{t\wedge \tau_{\partial \sO}}^{s\wedge\tau_{\partial \sO}}
\sG^{\alpha_u}f(\xi_u(\omega\ast_t\omega'))\, du
&= \inds{\tau_{\partial\sO}>t}(\omega)
\int_{t\wedge \tau_{\partial \sO}}^{s\wedge\tau_{\partial \sO}}
\sG^{\alpha_u}f(\xi_u(\omega\ast_t\omega'))\, du \\
&= \inds{\tau_{\partial\sO}>0}(\omega')
\int_{0}^{(s-t)\wedge\tau_{\partial \sO}}
\sG^{\alpha_u}f(\xi_u(\omega'))\, du \\
&=
\int_{0}^{(s-t)\wedge\tau_{\partial \sO}}
\sG^{\alpha_u}f(\xi_u(\omega'))\, du
\end{align*}
Putting everything together gives:
\begin{align*}
F(\omega\ast_t \omega')_s
&= f(\xi_s(\omega \ast_t \omega')) - f(\xi_0(\omega\ast_t\omega'))
- \int_0^{s\wedge \tau_{\partial \sO}}
\sG^{\alpha_u}f(\xi_u(\omega\ast_t\omega'))\, du \\
&= \left(
f(\xi_t(\omega*_t\omega')) - f(\xi_0(\omega\ast_t\omega'))
- \int_0^{t\wedge \tau_{\partial \sO}}
\sG^{\alpha_u}f(\xi_u(\omega\ast_t\omega'))\, du
\right) \\
&\quad + \left(
f(\xi_s(\omega \ast_t \omega')) - f(\xi_t(\omega*_t\omega'))
- \int_{t\wedge \tau_{\partial \sO}}^{s\wedge\tau_{\partial \sO}}
\sG^{\alpha_u}f(\xi_u(\omega\ast_t\omega'))\, du \right) \\
&= \left(
f(\xi_t(\omega)) - f(\xi_0(\omega))
- \int_0^{t\wedge \tau_{\partial \sO}}
\sG^{\alpha_u}f(\xi_u(\omega))\, du
\right) \\
&\quad + \left(
f(\xi_{s-t}(\omega')) - f(\xi_0(\omega'))
- \int_{0}^{(s-t)\wedge\tau_{\partial \sO}}
\sG^{\alpha_u}f(\xi_u(\omega'))\, du  \right) \\
&= F(\omega)_t + F(\omega')_{t-s}
= (F(\omega)\star_t F(\omega'))_s
\end{align*}

\subsection{Viscosity solutions}\

We conclude this example by showing how our result can be applied to
show that value functions of stochastic control problems are viscosity
solutions to the associated Hamilton-Jacobi-Bellman equations under weak
conditions. In particular, we do not require that the equation itself
admit an a-priori solution, or that any solution is smooth or unique (i.e,
that the comparison principle hold). Our results, in particular, imply some of
the results in \cite{BaySir13}, \cite{BouTou11} and the follow-up papers under weaker
assumptions.
We note that the lack of any strong ellipticity allow us
keep assuming, without loss of generality,
that the problem is time-independent; time can be
incorporated as
just another (space) variable with linear dynamics and the terminal
condition imposed as part of the boundary condition.

For a $C^{2}$ function $\vp:\sO\to\R$ we define the \define{Hamiltonian}
$H \vp: \sO\to (-\infty,\infty]$ by
\[ H \vp (x) = \sup_{a \in A} \sGa \vp (x)
    =  \sup_{a\in A} \Big(  \beta^i(x,a)
    \partial_{x_i} \vp(x) +
    \tot \gamma^{ij}(x,a) \partial_{x_i x_j} \vp(x) \Big).\]
\subsubsection{The viscosity property of the value function}

\begin{definition}
\label{def:viscosity}
Let $v$ be a real-valued function defined in a neighborhood $\sV$ of a point
$\bar{x}\in \sO$, and let
$v_*$ and $v^*$ denote its lower and upper semicontinuous envelopes,
respectively. We say that $v$  is a
\begin{enumerate}
\item
\define{viscosity supersolution} of the equation $Hv =0$ at $\bar{x}$ if
 $H\vp (\bar{x})\leq 0$
for each $\vp\in C^{2}(\sV)$ with the property that
$\vp(\bar{x})=v_*(\bar{x})$ and $\vp(x)<v_*(x)$ for
$x\in\sV\setminus\set{\bar{x}}$ , and
\item
\define{viscosity subsolution} of the equation $Hv =0$ at $\bar{x}$ if
 $H\vp (\bar{x})\leq 0$
for each $\vp\in C^{2}(\sV)$ with the property that
$\vp(\bar{x})=v^*(\bar{x})$ and $\vp(x)>v^*(x)$ for
$x\in\sV\setminus\set{\bar{x}}$ .
\end{enumerate}
A function which is both a viscosity supersolution and a viscosity
subsolution is called a \define{viscosity solution} to $H v=0$ at
$\bar{x}$.
\end{definition}
For $x\in\R^n$ and $r>0$
we define
\[ \tau^{r, x}=\inf\sets{t\geq 0}{ d(x,\xi_t) \geq r}\wedge r,\]
where $d$ denotes the Euclidean distance on $\R^n$, so that
$\tau^{r,x}$ is a raw stopping times on $\Oax$.
\begin{theorem}
\label{thm:viscosity}
Given $\bar{x}\in\sO$, suppose that there exists a neighborhood
$\sV$ of $\bar{x}$ in $\sO$ such that
\begin{enumerate}
\item \textbf{(availability of DPP)} the assumptions of Theorem \ref{thm:dpp-cont-diff} hold and $v$ is
finite on $\sV$,
\item \textbf{(continuity of coefficients)} $x\mapsto \beta^i(x,a)$ and $x\mapsto \sigma^i_k(x,a)$
are continuous functions on $\sV$ for all $a\in A$,
\item \label{ite:loc-const}\textbf{(admissibility of locally constant controls)} there exists a constant $r>0$ such that for each $x\in \sV$ and
$a\in A$ there exists
a control process $\prfi{\alpha_t}$ and an associated weak solution
$\prfi{\xi_t}$ of the controlled SDE \eqref{equ:SDE} with $\xi_0=x$
(defined on some filtered probability space) such that
\[ \alpha_t = a\  \efor t\in [0,\tau]\text{ a.s., } \ewhere \tau=\inf\sets{t\geq
0}{d(\xi_t,\bar{x})\geq r}\wedge r.\]
\end{enumerate}
Then  the value function $v$ is a viscosity solution to $Hv =0$ at
$x_0$.
\end{theorem}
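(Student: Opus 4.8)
The plan is to verify the two viscosity inequalities separately: that $v_*$ is a supersolution ($H\vp(\bar{x})\le 0$ whenever $\vp\in C^2(\sV)$ touches $v_*$ from below at $\bar{x}$), and that $v^*$ is a subsolution ($H\vp(\bar{x})\ge 0$ whenever $\vp$ touches $v^*$ from above). Both arguments share the same mechanism. Fix a small radius $\rho>0$ with $\bar{B}_\rho(\bar{x})\subseteq \sV\cap\sO$, use the raw stopping time $\tau:=\tau^{\rho,\bar{x}}\le\rho$ (so $\tau<\infty$ and $\xi_{\cdot\wedge\tau}$ stays in $\bar{B}_\rho(\bar{x})$, never meeting $\partial\sO$), and feed into the DPP of Theorem \ref{thm:dpp-cont-diff}, which here reduces to
\[ v(x)=\sup_{\mu\in\sL^x(\beta,\sigma)}\EE^{\mu}[v(\xi_\tau)],\quad x\in\sV, \]
the Dynkin identity coming from the local-martingale characterization of Proposition \ref{pro:mart-form}: for $\vp\in C^2$ (extended to a global $C^2(\R^n)$ function off $\bar{B}_\rho(\bar{x})$, which is harmless up to $\tau$) and any $\mu\in\sL^x(\beta,\sigma)$ with control $\alpha$,
\[ \EE^{\mu}[\vp(\xi_\tau)]=\vp(x)+\EE^{\mu}[\textstyle\int_0^{\tau}\sG^{\alpha_u}\vp(\xi_u)\,du]. \]
The local martingale is a genuine martingale up to $\tau$ because assumption \eqref{ite:dcf1} of Theorem \ref{thm:dpp-cont-diff} bounds the coefficients on $\bar{B}_\rho(\bar{x})$. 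That same bound yields the quantitative estimate I will use twice: a lower bound $\EE^{\mu}[\tau]\ge c(\rho)>0$, uniform over all admissible $\mu$ and all starting points $x$ with $d(x,\bar{x})\le\rho/2$, which follows from $\EE^{\mu}[|\xi_{t\wedge\tau}-x|^2]\le Ct$ (Itô plus bounded coefficients), forcing the exit from $B_\rho(\bar{x})$ to take nontrivial expected time.

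For the supersolution property, suppose toward a contradiction that $H\vp(\bar{x})>0$. Then there is a single $a\in A$ with $\sG^{a}\vp(\bar{x})>0$, and since $x\mapsto\sG^{a}\vp(x)$ is continuous for that fixed $a$ (continuity assumption (2)), there are $\eta>0$ and $\rho\le r$ with $\sG^{a}\vp\ge\eta$ on $\bar{B}_\rho(\bar{x})$. I pick $x_k\to\bar{x}$ realizing $v(x_k)\to v_*(\bar{x})$, and for each $k$ invoke the admissibility of locally constant controls (assumption \eqref{ite:loc-const}) to produce $\mu_k\in\sL^{x_k}(\beta,\sigma)$ whose control is frozen at $a$ at least up to $\tau$. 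Using the ``easy'' DPP inequality $v(x_k)\ge\EE^{\mu_k}[v(\xi_\tau)]$, the bound $v\ge v_*\ge\vp$ on $\sV$, and the Dynkin identity with the frozen control, I get $v(x_k)\ge\vp(x_k)+\eta\,\EE^{\mu_k}[\tau]\ge\vp(x_k)+\eta\,c(\rho)$. Letting $k\to\infty$ gives $v_*(\bar{x})\ge\vp(\bar{x})+\eta\,c(\rho)=v_*(\bar{x})+\eta\,c(\rho)$, a contradiction, so $H\vp(\bar{x})\le 0$.

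For the subsolution property, suppose $H\vp(\bar{x})<0$, so $\sG^{a}\vp(\bar{x})\le-2\delta$ for every $a\in A$ and some $\delta>0$. Choosing $x_k\to\bar{x}$ with $v(x_k)\to v^*(\bar{x})$ and, by the DPP, measures $\mu_k\in\sL^{x_k}(\beta,\sigma)$ that are $1/k$-optimal for $\EE[v(\xi_\tau)]$, the bound $v^*\le\vp$ near $\bar{x}$ and Dynkin give $v(x_k)-1/k\le\vp(x_k)+\EE^{\mu_k}[\int_0^\tau\sG^{\alpha^k_u}\vp(\xi_u)\,du]$, whence $\liminf_k\EE^{\mu_k}[\int_0^\tau\sG^{\alpha^k_u}\vp(\xi_u)\,du]\ge v^*(\bar{x})-\vp(\bar{x})=0$. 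To contradict this using $\delta>0$ and the lower bound on $\EE[\tau]$, I must bound the integrand above by $-2\delta+o(1)$ uniformly in the (arbitrary, control-dependent) value $\alpha^k_u$ as $\xi_u\to\bar{x}$. This is the main obstacle: it is exactly the local equicontinuity of the family $\{\sG^{a}\vp\}_{a\in A}$ at $\bar{x}$ (equivalently, upper semicontinuity of $H\vp$ there), which is not automatic since $H\vp$ is only a supremum of continuous functions. I would establish it by splitting $\sG^{a}\vp(\xi_u)-\sG^{a}\vp(\bar{x})$ into terms where the continuous, $a$-independent derivatives $D\vp,D^2\vp$ multiply the coefficients (controlled uniformly in $a$ via the local bounds of assumption \eqref{ite:dcf1}) and terms carrying the increments $\beta(\xi_u,a)-\beta(\bar{x},a)$ and $\gamma(\xi_u,a)-\gamma(\bar{x},a)$, which are driven to zero by the continuity hypothesis (assumption (2)); it is precisely here that continuity of the coefficients is indispensable. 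Granting this uniform estimate, the integrand is $\le-\delta$ on $\bar{B}_\rho(\bar{x})$ for $\rho$ small, so the integral is $\le-\delta\,\EE^{\mu_k}[\tau]\le-\delta\,c(\rho)<0$, contradicting the displayed $\liminf$. Hence $H\vp(\bar{x})\ge 0$, and combining the two inequalities shows that $v$ is a viscosity solution of $Hv=0$ at $\bar{x}$.
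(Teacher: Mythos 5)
Your supersolution argument is sound and is essentially the paper's: freeze the control at an $a\in A$ with $\sG^a\vp(\bar x)>0$ (possible by assumption (3)), apply the easy half of the DPP together with $v\ge v_*\ge\vp$ and the Dynkin identity coming from Proposition \ref{pro:mart-form}. The only real difference is how the contradiction is quantified: you derive a uniform lower bound $\EE^\mu[\tau]\ge c(\rho)>0$ from a second-moment estimate, whereas the paper avoids any such estimate by exploiting the strict separation $\delta=\min\sets{v_*(x)-\vp(x)}{d(x,\bar x)=r}>0$ on the sphere, gaining $\min(\delta,\eps r)$ whether the trajectory exits the ball early or runs until time $r$. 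Both routes are valid; the paper's is slightly more economical, yours is more quantitative but requires the (correct) moment bound as an extra lemma.

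The subsolution half has a genuine gap, and it sits exactly where you flagged it. To contradict $\liminf_k\EE^{\mu_k}[\int_0^\tau\sG^{\alpha^k_u}\vp(\xi_u)\,du]\ge 0$ you need $\sup_{a\in A}\sG^a\vp(x)\le-\eps$ for all $x$ in a ball around $\bar x$, i.e.\ upper semicontinuity of $H\vp$ at $\bar x$. Your proposed derivation splits $\sG^a\vp(x)-\sG^a\vp(\bar x)$ into a part where the $a$-uniform coefficient bounds of assumption (1) multiply the increments of $D\vp,D^2\vp$ --- that part is fine --- and a part carrying $\beta(x,a)-\beta(\bar x,a)$ and $\gamma(x,a)-\gamma(\bar x,a)$ against $D\vp(\bar x),D^2\vp(\bar x)$. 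Assumption (2) gives continuity in $x$ for each \emph{fixed} $a$ only; it yields no control on $\sup_{a\in A}\sabs{\beta(x,a)-\beta(\bar x,a)}$, so this second part is not driven to zero uniformly in $a$ and the uniform bound you need does not follow. What is actually required is equicontinuity of the family $\set{\beta(\cdot,a),\gamma(\cdot,a)}_{a\in A}$ at $\bar x$. It is worth noting that the paper's own proof asserts at this point that $H\vp$ is upper semicontinuous ``being representable as a supremum of continuous functions,'' but suprema of continuous functions are \emph{lower} semicontinuous, so the paper is implicitly relying on the same unstated equicontinuity. You have correctly located the weak point of the argument; you have not repaired it, and under the hypotheses as stated it cannot be repaired by the decomposition you describe.
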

\begin{proof}
We split the proof into two parts, in which we establish the supersolution
and the subsolution property of $v$ separately.

\emph{The supersolution property.}
We take $\vp\in C^2$ which touches $v_*$ at $\bar{x}$ from below, i.e.
$v_*(\bar{x}) = \vp(\bar{x})$ and $\vp(x)<v_*(x)$ for $x\ne \bar{x}$
This implies that
  there exists a sequence $\seqm{x}$ such that
 \begin{align}
 \label{equ:xm}
   v(x_m) \leq
    \vp(x_m) + \oo{m} \eand
   d(x_m,\bar{x})\leq \oo{m}.
 \end{align}
Suppose, for contradiction,  that $  H\vp (\bar{x})>0$. Then there exists
$a\in A$ such that $(\sG^a \vp)(\bar{x})>0$. Since $\sG^a \vp$ is
continuous in $x$, there exist constants $\eps>0$ and $r>0$ such that
$(\sG^a \vp)(x)\geq \eps $ when $d(x,\bar{x})\leq r$.
Using the fact that
$\vp(x)<v_*(x)$ as soon as $x\ne \bar{x}$ and that the function $v_*-\vp$
is lower semicontinuous, we find that
\[ \delta = \min\sets{v_*(x) - \vp(x)}{ d(x,\bar{x})=r }>0.\]
For each $m\in\N$, let $\mu_m$ be the law of the weak solution
$\prfi{\xi_t}$ described in part \ref{ite:loc-const} of the statement,
where we assume, without loss of generality, that the same constant $r>0$,
as above, can be used.
Proposition \ref{pro:mart-form}
and the local nonnegativity of $\sG^a \vp-\eps$ imply that
$\vp(\xi_t) - \eps t$ is a bounded
$\mu_m$-submartingale under $\mu_m$ on $[0,\tau^{r,\bar{x}}]$. Therefore, with
$\tau=\tau^{r,\bar{x}}$ and for $m>1/r$, we get
\begin{align*}
\vp(x_m) &\leq
\EE^{\mu_m}[ \vp(\xi_{\tau}) - \eps \tau ] \leq
\EE^{\mu_m}[ \vp(\xi_{\tau})\inds{\tau<r}] +
\EE^{\mu_m}[ (\vp(\xi_{\tau})-\eps r) \inds{\tau=r}] \\
&\leq \EE^{\mu_m}[ (v_*(\xi_{\tau}) - \delta) \inds{\tau<r} ] +
\EE^{\mu_m}[ (v_*(\xi_{\tau}) - \eps r) \inds{\tau=r}]\\ & \leq \EE^{\mu_m}[
v_*(\xi_{\tau}) ] - \min(\delta,\eps r).
\end{align*}
Using the dynamic programming principle of Theorem \ref{thm:dpp-cont-diff}
and the relation \eqref{equ:xm} above, we finally obtain
\begin{align*}
  v(x_m) - \oo{m} + \min(\delta,\eps r) \leq \EE^{\mu_m}[ v_*(\xi_{\tau}) ]
  \leq \EE^{\mu_m}[ v(\xi_{\tau}) ] \leq \sup_{\mu\in
  \sL^{x_m}(\beta,\sigma)}
  \EE^{\mu}[ v(\xi_{\tau}) ] = v(x_m),
\end{align*}
and reach a contradiction by taking $m$ large enough.

\emph{The subsolution property.} We pick $\vp\in C^2$ which touches $v^*$ at $\bar{x}$ from above, i.e.
$v^*(\bar{x}) = \vp(\bar{x})$ and $\vp(x)>v_*(x)$ for $x\ne \bar{x}$.
As in the first part of the proof, this implies that
  there exists a sequence $\seqm{x}$ such that
 \begin{align}
 \label{equ:xm2}
   v(x_m) \geq
    \vp(x_m) - \oo{m} \eand
   d(x_m,\bar{x})\leq \oo{m}.
 \end{align}
Suppose, for contradiction, that $ H\vp (\bar{x})<0$. Being representable
as a supremum of continuous functions, $H\vp$ is upper semicontinuous, and
so there exist constants $r>0$ and $\eps>0$ such that $ H\vp (x) \leq -\eps$
for all $x$ with $d(x,\bar{x})\leq r$.
Using the fact that
$\vp(x)>v^*(x)$ as soon as $x\ne \bar{x}$ and that the function $\vp - v^*$
is lower semicontinuous, we find, as above, that
\[ \delta = \min \sets{\vp(x) - v^*(x) }{ d(x,\bar{x})=r }>0.\]
Let the laws $(\mu_m)_{m\in\N}$
be defined as in the first part of the proof, so that
  under each $\mu_m$ the process $\vp(\xi_t) + \eps t$ is
  supermartingale on $[0,\tau^{r,\bar{x}}]$. It follows that, with
  $\tau=\tau^{r,x}$, we have
\begin{align*}
     \vp(x_m) &\geq
     \EE^{\mu}[ \vp(\xi_{\tau}) + \eps \tau ] =
     \EE^{\mu}[ (\vp(\xi_{\tau}) + \eps \tau) \inds{\tau=r}] +
     \EE^{\mu}[ (\vp(\xi_{\tau}) + \eps \tau) \inds{\tau<r}]\\
     &\geq \EE^{\mu}[ (v^*(\xi_{\tau}) + \delta) \inds{\tau=r}] +
     \EE^{\mu}[ (\vp(\xi_{\tau}) + \eps r) \inds{\tau<r}]
     \geq \EE^{\mu}[ v(\xi_{\tau}) ]  + \min(\delta,\eps r)
   \end{align*}
We take a supremum over all $\mu\in\sP^{x_m}$ on the right hand side and
use the DPP to conclude that
$\vp(x_m) \geq v(x_m) + \min(\delta,\eps r)$ for all $m$ - a contradiction
with \eqref{equ:xm2}.
\end{proof}

\section{Application 2 - Singular Control Problems}
\subsection{The Monotone-Follower Problem}
We show how singular control problems fit our framework on the
example of the celebrated Monotone Follower Problem (first formulated by
Bather and Chernoff \cite{BatChe67}, analyzed rigorously by Karatzas
and Shreve in \cite{KarShr84} and studied in many papers since).
Formally, the Monotone Follower Problem asks for a minimal cost incurred
while controlling a Brownian motion $W$ by adding to it a non-decreasing
  left-continuous
  process $\alpha$. The cost is typically given by
$$ \EE\left[
	 \int_0^\tau f(t) d\alpha_t
	+ g(W_T-\alpha_T)
+ \int_0^\tau h(t,W_t-\alpha_t) dt
        \right] , $$
where $g$ and $h$ model the deviation of the controlled trajectory
$W+\alpha$ from the desired optimal position and $f$ plays the role of
``fuel'' cost.
\subsection{Formulation in our framework} To make it easier to focus on the
issues pertinent to the proof of the DPP, we generalize the problem to a
degree. The continuous variables, such as time, running cost or the
Brownian motion from the above description will be replaced by a general,
multidimensional diffusion. This will not only allow us to reuse many of the
conclusion of the previous section, but also to get a clearer understanding
of the role different parts play as far as DPP is concerned.

\subsubsection{The space $\Omega$.}
Given $m,n\in\N$, let $\sO \subseteq \R^{m+n}$ be a nonempty open set
with closure $E=\Cl\sO$, which will play the role of
our state space. Let $\Cr$ and $G_{\R}$ denote the canonical spaces
of all continuous and  c\` agl\` ad paths, respectively,  with values in
$\R$, and let $\Grup$ denote the subset
of $G_{\R}$ consisting of nondecreasing paths.
Let $\Omega^{X}$ denote the space of paths in
$\Cr^m \times (\Grup)^n $ with values in $E$, absorbed upon entry in  $\partial\sO$, i.e. stopped
at the canonical stopping time
\[ \tau_{\partial \sO}(\omega) = \inf \sets{t\geq 0}{ \omega(t) \in \partial
\sO} \efor \omega \in \Omega^X.\]
With the  control component taking value
in $\Omega^{\alpha}=\Grup$, the space $\Omega$ is defined as
the subset of $\Omega^{X} \times \Omega^{\alpha}$ consisting of those paths
$(X,\alpha)$ stopped once $X$ hits $\partial \sO$. Equivalently, $\Omega$
is the set of paths in $\Omega^X \times \Omega^{\alpha}$ that get absorbed
once the coordinate map $(X,\alpha)$ enters the set $\partial \sO \times
\R$. We overload the notation $\tau_{\partial \sO}$ to denote the hitting
time of $\partial \sO \times \R$, when considered as a stopping time on
$\Omega$.

The first $n$ coordinate maps on $\Omega$ (corresponding to continuous
paths) are denoted by $Y$, the next $m$ (corresponding to left-continuous
paths) by $Z$ and the last one by $\alpha$, so that $\omega(t) =
(Y_t(\omega), Z_t(\omega), \alpha_t(\omega))$, for $\omega\in \Omega$ and
$t\geq 0$.

\subsubsection{The T-space, TC-space structures}
We use the standard truncations on each of the components of $\Omega$.
To see that $\Omega$ carries a natural structure of a T-space, we simply
need to combine the discussion in paragraph \ref{par:G} in  subsection
\ref{sss:exam-T} with the product construction of paragraph
\ref{par:prod}. It can be upgraded to a TC-space by equipping it with
\begin{enumerate}
\item the
strict concatenation operator $\bullet$, as  defined in equation
\eqref{equ:str-form} in subsection \ref{par:strict-conc}, on $\Omega^X$
(i.e., for the first $m+n$ coordinates), and
\item the adjusted concatenation
$\star$, as defined by \eqref{equ:adj-concat},
on $\Omega^{\alpha}=\Grup$ (for the last coordinate).
\end{enumerate}
The so-obtained concatenation on $\Omega$ will be denoted by $\ast =
(\bullet,\star)$.

\subsubsection{The state $X$ and the cost functional $G$.}
Let
\[ \ulim: \Omega^X \to E \]
be a "Banach limit", i.e., a map with the following properties:
\begin{enumerate}
\item Its value on the trajectory $\omega$
coincides with the pointwise limit $\lim_{t\to\infty} \omega(t)$
whenever this limit exists; in particular, it equals the value at which
$\omega$ is absorbed, when absorption happens.
\item It returns a value in $E$ in a Borel measurable way.
\item It is invariant under the action of the shift operator.
\end{enumerate}
A fairly general construction of such a map on spaces of right-continuous
trajectories can be found in \cite[Lemma 3.12, p.~1614]{Zit14}. A closer
inspection of the proof reveals that the right-continuity assumption can be
replaced by the assumption of left continuity, and that the conclusion of
the theorem applies to the present setting. Given such a map $\ulim$, we
simply define
\[ X(\omega) = \ulim \omega^X \efor \omega= (\omega^X,\omega^{\alpha})
\in \Omega.\]
In agreement with the definition of the coordinate maps $Y_t$ and $Z_t$ above,
we split the first $n$ and the last $m$ coordinates of $X$ into $Y$ and
$Z$, i.e. $X(\omega) = (Y(\omega), Z(\omega))$. This way, since we
are working with the standard truncation, we have
\[ X_t(\omega) =
X(\omega_{\leq t}) = ( Y(\omega_{\leq t}), Z(\omega_{\leq t})) =
(Y_t(\omega), Z_t(\omega)).\]

With $X$ defined, the cost function $G$ is simply a Borel function of $X$:
  \begin{align}
  \label{equ:G-def}
    G(\omega) = g(X(\omega)),
  \end{align}
where we assume throughout that $g$ is nonnegative so as not to need to pay
attention to
integrability conditions in the sequel. Much less restrictive assumptions are
also possible.

\subsubsection{The control correspondence $\sP$}
The control correspondence describing our monotone-follower problem will naturally factor through the state map $X$, so
we define the family $(\osP^x)_{x\in E}$, and use it to construct the
control correspondence in the usual way $\sP(\omega) = \osP^{X(\omega)}$.
Heuristically, the dynamics of the state $X_t=(Y_t,Z_t)$ under
$\osP^x$ for $x\in E$ can be described as follows: $Y$ is a diffusion on
$\R^n$, with coefficients depending on $X_t$,
absorbed once $X_t$ hits $\partial \sO$. The left-continuous
component $Z$ ``moves'' as follows
 \begin{align}
 \label{equ:dZ}
    dZ_t = c(Y_t)\, d\alpha_t,
 \end{align}
 where $c$ is a vector of $m$ nonnegative and continuous functions.

To simplify the exposition, we express $\osP$ as an
intersection of two control correspondences $\osP_c$ and $\osP_l$,  where $\osP_c$
``constrains'' the motion of continuous portion $Y$
and $\osP_l$ the left-continuous portion $Z$, of the state process $X$.
To define $\osP_c$ we follow the approach of section \ref{sec:app1} and
consider a family $\sD_c$ of maps from $\Omega$ to $\Cr \subseteq D_{\R}$
given by
\[ F^f( \omega)_t =
f(Y_t(\omega)) - f(Y_0(\omega)) - \int_0^{t\wedge \tau_{\partial
\sO}(\omega)}
\sG^{Z_u(\omega)} f(Y_u(\omega))\, du,\]
where $f$ ranges through the set $\QCoord$ as in the second paragraph of
section \ref{sss:const}, and $\sG^{z}$ is a differential operator of the
form
\[ (\sG^z f)(y) = \beta^i(y,z) \partial_i f(y) +
\tot \Sigma^{ij}(y,z) \partial_{ij} f(y), \ewith \Sigma^{ij} = \tsum_k
\sigma^i_k \sigma^j_k,\]
with coefficients $\beta$ and $\sigma$ measurable, locally bounded and globally
Lipschitz in $y$. These conditions are imposed to ensure that the control correspondence
$\osP_c$ generated by $(\sD,X)$ is well-defined and non-empty.

We note here that the dependence of any $F$ on
$\alpha$ is trivial; that means that even though we think of $\alpha$ as a
control, its influence on $F$ factors entirely through the left-continuous
process $Z$ and does not show up in $\osP_c$. To describe how $Z$ depends on $\alpha$, we need to introduce the
control correspondence $\osP_l$.
To describe it rigorously, we first need to agree on how to define the integral
with respect to a left-continuous process
in \eqref{equ:dZ} above. Such a construction has been carried out already
 in \cite[Remark 5.3., p.~873]{KarShr84}; we simply exhibit parts of their
 discussion for the convenience of the reader.
Given a nondecreasing \cg function
$\alpha:[0,\infty) \to \R$, we define the \cd function $\alpha^+:[0,\infty)\to \R$ by setting
$\alpha^+_t:=\alpha_{t+}=\inf_{u>t } \alpha_u$.
For a locally bounded Borel function $\gamma:[0,\infty)\to \R$, we define
   \begin{align}
     \int_0^t \gamma(u)\, d\alpha_u := \begin{cases}
    0, & t=0, \\
     \gamma(0) \Delta \alpha_0 +\int_{(0,t)} \gamma(u)\,
     d\alpha^+_u, & t>0,\end{cases}
     \label{equ:lc-integral}
   \end{align}
   where $\Delta\alpha_0 = \alpha^+_{0}-\alpha_0$ and
the integral on the right-hand side is the Lebesgue-Stieltjes
integral with respect to the measure induced by $\alpha^+$ on $(0,t)$. We
immediately observe that the function $\zeta_t = \int_0^t \gamma(u)\, d\alpha_u $ is
\cg and satisfies $\zeta^+_0  =\gamma(0) \Delta \alpha_0$. We also record, for later
use, the following characterization:
\begin{lemma}
\label{lem:left-integral}
Suppose that
$\alpha \in \Grup$ and
that $\gamma:[0,\infty)\to\R$ is continuous. For $\zeta\in \Grup$,
the following two conditions are equivalent
\begin{enumerate}
  \item
$\zeta =\zeta_0+\int_0^{\cdot} \gamma(u)\, d\alpha_u$, and
 \item
 $\Delta \zeta_{0+} = \gamma(0) \Delta \alpha_{0+}$ and for all rational $0<r<s$ and each
 $n\in\N$ there exist rationals $p,q\in (r,s)$ such that
  \begin{align}\label{equ:qu}
   (\gamma(p)-\oo{n})  (\alpha^+_{s}-\alpha^+_{r}) \leq \zeta^+_r - \zeta^+_s \leq
  (\gamma(q)+\oo{n})(\alpha^+_{s}-\alpha^+_{r}).
  \end{align}
  \end{enumerate}
\end{lemma}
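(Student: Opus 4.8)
The plan is to reduce everything to the \cd{} regularizations and the Lebesgue--Stieltjes measure $d\alpha^+$. Writing $\tilde\zeta_t = \zeta_0 + \int_0^t \gamma(u)\, d\alpha_u$ for the candidate in (1), the definition \eqref{equ:lc-integral} together with a passage to the right limit gives
\[
\tilde\zeta^+_t = \zeta_0 + \gamma(0)\,\Delta\alpha_0 + \int_{(0,t]} \gamma(u)\, d\alpha^+_u,
\]
so that $\tilde\zeta^+_0 = \zeta_0 + \gamma(0)\,\Delta\alpha_{0+}$ (recall $\Delta\alpha_0=\Delta\alpha_{0+}$) and $\tilde\zeta^+_s - \tilde\zeta^+_r = \int_{(r,s]}\gamma\, d\alpha^+$ for all $0<r<s$. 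Since a nondecreasing \cg{} path is recovered from its \cd{} regularization by $\zeta_t = (\zeta^+)_{t-}$ for $t>0$, together with the value $\zeta_0$, proving the equivalence amounts to showing that the increments of $\zeta^+$ coincide with those of $\tilde\zeta^+$, plus agreement of the initial jump at $0$.

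For (1) $\Rightarrow$ (2): the jump identity $\Delta\zeta_{0+}=\gamma(0)\Delta\alpha_{0+}$ is immediate from the displayed formula at $t=0$. For the two-sided bound, I would fix rational $0<r<s$; since $\gamma$ is continuous on the compact $[r,s]$ and the rationals are dense in $(r,s)$, for each $n$ I choose rationals $p,q\in(r,s)$ with $\gamma(p)<\inf_{[r,s]}\gamma+\oo n$ and $\gamma(q)>\sup_{[r,s]}\gamma-\oo n$; nonnegativity of $d\alpha^+$ then sandwiches the exact increment $\zeta^+_s-\zeta^+_r=\int_{(r,s]}\gamma\, d\alpha^+$ between $(\gamma(p)-\oo n)(\alpha^+_s-\alpha^+_r)$ and $(\gamma(q)+\oo n)(\alpha^+_s-\alpha^+_r)$, which is exactly the bound required by (2).

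For the converse (2) $\Rightarrow$ (1), the first and main step is to upgrade the one-point bound of (2) to the exact identity $\zeta^+_s-\zeta^+_r=\int_{(r,s]}\gamma\, d\alpha^+$ for every rational $0<r<s$. I would fix such $r,s$, insert a rational partition $r=t_0<\dots<t_N=s$, and use additivity of the increments of $\zeta^+$ together with the bound of (2) on each subinterval; letting $n\to\infty$ first replaces the test values $\gamma(p),\gamma(q)$ by $\inf$ and $\sup$ of $\gamma$ over the subinterval, producing the Darboux--Stieltjes lower and upper sums $\sum_k \inf_{[t_{k-1},t_k]}\gamma\,(\alpha^+_{t_k}-\alpha^+_{t_{k-1}})$ and $\sum_k \sup_{[t_{k-1},t_k]}\gamma\,(\alpha^+_{t_k}-\alpha^+_{t_{k-1}})$ as two-sided bounds for $\zeta^+_s-\zeta^+_r$. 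Refining the mesh then squeezes both sums to $\int_{(r,s]}\gamma\, d\alpha^+$. Having matched increments on rationals, I would let $r\downarrow 0$ along rationals and use right-continuity of $\zeta^+$, so that $\zeta^+_r\to\zeta^+_0=\zeta_0+\gamma(0)\Delta\alpha_{0+}$ (the last equality being the jump hypothesis), to obtain $\zeta^+_s=\tilde\zeta^+_s$ for all rational $s>0$; right-continuity of both sides extends this to all $s\ge 0$, and taking left limits yields $\zeta=\tilde\zeta$.

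The main obstacle is precisely this refinement step: the hypothesis supplies only a single pair of rational test points $p,q$ per interval and per $n$, so one must verify that subdividing into rational subintervals and summing converts those crude one-point bounds into genuine Darboux--Stieltjes sums converging to the integral. The routine but essential inputs are uniform continuity of $\gamma$ on the compact $[r,s]$ (to control the oscillation of $\gamma$ on small subintervals) and the finiteness of the total mass $\alpha^+_s-\alpha^+_r$ of $d\alpha^+$ on $(r,s]$, so that the gap between the upper and lower sums is at most the maximal oscillation of $\gamma$ times $(\alpha^+_s-\alpha^+_r)$, which tends to $0$ as the mesh shrinks.
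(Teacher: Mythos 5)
Your proposal is correct and follows essentially the same route as the paper: both reduce (1) to the identity $\zeta^+-\zeta^+_0=\int_{(0,\cdot]}\gamma\,d\alpha^+$ together with the jump condition at $0$, and then characterize that identity by sandwiching increments of $\zeta^+$ between Darboux--Stieltjes-type sums built from $\inf$ and $\sup$ of $\gamma$, using continuity of $\gamma$, right continuity of $\alpha^+$ and $\zeta^+$, and density of the rationals. The only difference is one of detail: where the paper asserts the equivalence with the $\inf$/$\sup$ sandwich over all real subintervals and calls the passage to the rational one-point form ``easily seen,'' you make the partition-and-refine argument explicit, which is a faithful expansion of the same idea rather than a new one.
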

\begin{proof}
Thanks to right continuity of $\zeta^+$,
(1) above is equivalent to
 $\Delta \zeta_{0+} = \gamma(0) \Delta \alpha_{0+}$ and
  \begin{align}
  \label{equ:zeta-plus}
    \zeta^+ - \zeta^+_0 = \int_{(0,\cdot]} \gamma(u)\, d\alpha^+_u.
  \end{align}
Using the right continuity of $\alpha^+$ and the continuity of $\gamma$
(which guarantees the equivalence between the Riemann-Stieltjes and the
Lebesgue-Stieltjes integration in this case) we conclude that
the equality in \eqref{equ:zeta-plus} is equivalent to
\[  \forall\, u<v\in (0,\infty),\  \left( \inf_{t\in [u,v]} \gamma(t)
\right)
(\alpha^+_u-\alpha^+_v) \leq \zeta^+_u - \zeta^+_v \leq
  \left( \sup_{t\in [u,v]} \gamma (t)\right)(\alpha^+_u-\alpha^+_v).\]
Thanks to the right continuity of $\alpha^+$ and $\zeta^+$, this is easily
seen to be equivalent to (second statement in ) (2) above.
\end{proof}
Given a continuous function $c: \R^m \to \R^n$, for $x\in E$ we define
\begin{align*}
    \osP_l(x) = \{ \mu \in \prob(\Omega)\,:\,  X_0=x,\, Z = Z_0+
    \int_0^{\cdot} c(Y_u)\, d\alpha_u, \mu-\text{a.s.}\}
 \end{align*}
 where the left-continuous integral is interpreted component-wise.
We set $\osP(x)= \osP_c(x) \cap \osP_l(x)$ and define the
\define{value function} of the associated
control problem by
\begin{align}
\label{equ:v2}
v(x) = \inf_{\mu\in\osP(x)} \EE^{\mu}[ G ], \ x\in E.
\end{align}
\begin{remark}
\label{rem:class} To see how the classical monotone-follower fits into this
framework, we take $Y=(T,W,H)$ and $Z=(L,C)$, where, informally, the
components have the following dynamics:
\begin{align*}
 d T_t &= - dt, && \text{ time-to-go,}\\
 d W_t &= dW_t, && \text{ Brownian motion}\\
 d H_t &= h(-T_t, W_t, L_t)\, dt, && \text{ running cost }\\
 d L_t &= d\alpha_t, && \text{ position of the follower, and } \\
 d C_t &= f(T_t)\, d\alpha_t, && \text{ fuel cost,}
\end{align*}
where $f$ and $h$ are nonnegative and continuous.
The state space $E$ is defined by $E=\Cl \sO$, where
\[ \sO = (-\infty,0) \times \R \times (0,\infty) \times (0,\infty) \times
(0,\infty),\]
so as to keep the components $H$ and $C$ nonnegative. This will also make
sure that the state process will exit $E$ when (and only when) $T_t=0$.
A typical cost functional $G$ will be of the form
$G(X)= H+C+g(W,L)$, where $g$ is a nonnegative Borel function.

\end{remark}
\subsection{The Dynamic Programming Principle} With all the components
of our framework in place, we are ready to prove the following result:
\begin{proposition}[DPP for the monotone-follower problem]
Given the setting described above, the value function $v: E \to (-\infty,\infty]$ is universally measurable
and satisfies the dynamic programming principle
\[ v(x) = \sup_{\mu\in\osP^x} \EE^{\mu}[ v(X_{\tau}) \inds{\tau<\infty} +
G \inds{\tau=\infty} ], \eforall x\in E,\]
for each (raw) stopping time $\tau$ on $\cep$.
\label{pro:DPP-diff}
\end{proposition}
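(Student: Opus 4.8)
The plan is to realize $\osP=\osP_c\cap\osP_l$ as an intersection of two control correspondences on the TC-space $\Omega$, verify analyticity, concatenability and disintegrability for the intersection, and then invoke the abstract principle of Section~\ref{sec:abs}. First I would record the structural prerequisites: $\Omega$, built as the product of $\Cr^m\times(\Grup)^n$ (carrying the strict concatenation $\bullet$) with $\Grup$ (carrying the adjusted concatenation $\star$), is a TC-space on which $\ast=(\bullet,\star)$ both factors through and is a factor of the Banach-limit state map $X$, and a shift operator $\theta$ is inherited from the factors. Since $g\ge 0$, the payoff $G=g(X)$ is a nonnegative tail random variable; consequently $-G\in\slmz(\osP)$ trivially (as $(-G)^+=0$), so the minimization in \eqref{equ:v2} is exactly the maximization problem for $-G$, and it suffices to establish the three key properties for $\osP$ and appeal to Theorem~\ref{thm:DPP} in its $X$-factored form.

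For $\osP_c$ I would argue that it is genuinely martingale-generated by the countable family $\sD_c=\{F^f:f\in\QCoord\}$ and the state map $X$. The computation carried out in Section~\ref{sec:app1} shows each $F^f$ is a TC-morphism into $(\dr^0,\star)$, and the local boundedness of $\beta,\sigma$ imposed above gives canonical local boundedness in the sense of \eqref{equ:Mn}. Propositions~\ref{pro:analy-mart}, \ref{pro:concat-mart} and \ref{pro:dis} then yield analyticity, concatenability and disintegrability of $\osP_c$; I would emphasize that the disintegration furnished by Proposition~\ref{pro:dis} is the \emph{canonical} one, namely $\nu=\bar\nu\circ X$ with $\bar\nu_x$ a version of the regular conditional law $\mu(\theta_\kappa\in\cdot\mid X_\kappa=x)$, since this canonical form is what makes the later merge possible.

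For $\osP_l$, whose fibre constraint is the pathwise identity $Z=Z_0+\int_0^{\cdot}c(Y_u)\,d\alpha_u$, I would first use Lemma~\ref{lem:left-integral} to recast this as a countable family of rational-indexed inequalities, each a Borel condition on $\mu\in\prob(\Omega)$; this makes $\Gamma(\osP_l)$ Borel, hence analytic. Concatenability of $\osP_l$ follows because $dZ=c(Y)\,d\alpha$ is a local increment relation preserved by $\ast$: the concatenation follows a $\mu$-path on $[0,\tau]$ and a $\nu$-path afterwards, the junction matching through $X_\tau(\omega)=X_0(\omega')$, with $\bullet$ keeping $Z$ continuous and $\star$ preserving the $\alpha$-increments. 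By Remark~\ref{rem:unions} these two properties pass to the intersection, so $\osP$ is analytic and concatenable.

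The main obstacle is disintegrability of the intersection: Remark~\ref{rem:unions} preserves disintegrability only under unions, so the disintegrations of $\osP_c$ and $\osP_l$ cannot simply be combined. The resolution is that they admit a \emph{common} disintegration. Taking the canonical selector $\nu=\bar\nu\circ X$ produced by Proposition~\ref{pro:dis} for $\osP_c$, I would verify that this very $\nu$ also selects $\osP_l$: because the defining constraint of $\osP_l$ is a local increment identity and $X$ (hence the constraint) is shift-covariant, the push-forward of $\mu$ under $\theta_\kappa$ remains supported on paths satisfying $Z=Z_0+\int c(Y)\,d\alpha$, so its regular conditional distribution $\bar\nu_x$ lies in $\osP_l(x)$ for $\mu_{X_\kappa}$-almost every $x$; patching on the exceptional Borel null set exactly as in Proposition~\ref{pro:dis} gives $\nu\in\sS(\osP_l)$ with $\mu=\mu\ak\nu$, whence $\osP$ is disintegrable. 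I expect this verification — that the shift-disintegration is genuinely shared by both factors — to be the only delicate point. With all three properties established and $G$ a tail random variable, Theorem~\ref{thm:DPP} applied to $-G$, together with the factoring of $v$ through $X$, delivers the asserted dynamic programming principle, while Proposition~\ref{pro:DPP} yields the universal measurability of $v$.
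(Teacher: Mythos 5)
Your proposal follows essentially the same route as the paper: decompose $\osP=\osP_c\cap\osP_l$, obtain analyticity and concatenability factor-by-factor (martingale machinery for $\osP_c$, Lemma~\ref{lem:left-integral} and the increment computation for $\osP_l$), and — the one genuinely delicate point, which you correctly isolate — handle disintegrability by showing that the canonical shift-based selector $\nu=\bar\nu\circ X$ furnished by Proposition~\ref{pro:dis} for $\osP_c$ is simultaneously a selector of $\osP_l$ up to a $\mu_{X_\kappa}$-null set that can be patched. Your explicit reduction of the infimum to a supremum over $-G$ is a minor presentational difference (and in fact cleans up a sign inconsistency between \eqref{equ:v2} and the statement), but the substance of the argument coincides with the paper's proof.
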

\begin{proof} As in the previous section, we establish three key properties,
namely, analyticity, concatenability and disintegrability, and use Theorem
\ref{thm:DPP}. The additional requirement that $G$ be a tail random
variable follows directly from the fact that it was defined in
\eqref{equ:G-def} using a ``Banach limit'', i.e.,
in a shift-invariant way. The membership in the class
$\slzm(\osP)$ of lower semi-integrable random variables is guaranteed by the
assumption that the function $g$ in \eqref{equ:G-def} is bounded from
below.

\emph{Analyticity:}
To establish the analyticity of $\osP$ it will be enough to show that both $\osP_c$
and $\osP_l$ are analytic (see Remark  \ref{rem:unions}).
  All the maps in $\sD_c$ are clearly non-anticipating  and take values in
  $\Cr \subseteq \dr$, so we can apply
  Proposition \ref{pro:analy-mart} to conclude that $\osP_c$ is analytic.

The analyticity of $\osP_l$, follows from Lemma
\ref{lem:left-integral}. Indeed, it expresses $\osP_l$ as a result
of a countable collection of Borel-preserving operations on cylinders.

\emph{Concatenability:}
Just like in the case of analyticity, Remark \ref{rem:unions}
allows us to
prove concatenability of $\osP$ by proving it separately for $\osP_c$ and
$\osP_l$. Starting with $\osP_c$, we simply note that the maps $F^f$ in
$\sD_c$
are $\Cr$-valued and therefore canonically locally bounded. Their TC-morphism
property is established exactly like in section \ref{sss:abst-visc} above,
so we can use Proposition \ref{pro:concat-mart} to conclude that $\osP_c$ is
closed under concatenation.

Next, we turn to the concatenability of $\osP_l$.
Given $t\geq 0$ let $\omega,\omega'\in \Omega$ be such that
\begin{enumerate}
\item $X_0(\omega') = X_{t}(\omega)$,
\item $C(\omega) = \int_0^{\cdot} c(Y_u(\omega))\, d\alpha_u(\omega)$, and
\item $C(\omega') = \int_0^{\cdot} c(Y_u(\omega'))\, d\alpha_u(\omega')$,
\end{enumerate}
We note that these properties hold for $(\omega,\omega')$
with probability $1$, under $\mu \otimes_t \nu$. Using the fact that $\ast$
is strict in the first $m+n$ components and adjusted in $\alpha$, we observe
that for $s>t$ we have
  \begin{multline*}
    C_s(\oto) - C_{t+}(\oto) =
    C_{s-t}(\omega') - C_{0+}(\omega') =
    \int_{(0,s-t)} c(Y_u(\omega'))\, d\alpha^+_u(\omega') = \\
      = \int_{(t,s)} c(Y_{u-t}(\omega'))\, d\alpha^+_{u-t}(\omega')
      = \int_{(t,s)} c(Y_u(\oto))\, d\alpha^+_u(\oto),
      \end{multline*}
  as well as
  \begin{align*}
    C_{t+}(\oto) &- C_t(\oto) = C_{0+}(\omega') - C_0(\omega') =
    c(Y_0(\omega') ) (\alpha_{0+}(\omega') - \alpha_0(\omega')) \\ &=
    c(Y_t(\oto) ) (\alpha_{t+}(\oto) - \alpha_t(\oto)).
  \end{align*}
These two observations make it straightforward to complete the proof of the
concatenability of $\osP_l$.

\emph{Disintegrability:} While disintegrability cannot be established by
showing it for
$\osP_c$ and $\osP_l$ separately, we can use Proposition \ref{pro:dis}, whose
conditions are easily shown to hold in the present setting, to
perform most of the work for us. Indeed, given $\omega_0\in\Omega$ and
$\mu\in \osP_c(\omega_0)$ and $\kappa\in\Stop$, it states that there exists
a version $x\mapsto \bar{\nu}_x$ of the regular conditional probability
$\mu(\theta_{\kappa}\in \cdot| X_{\kappa}=x)$ with the following two
properties: 1) $\nu \in \sS(\osP_c)$ and 2) $\mu = \mu \ast_{\kappa} \nu$, where
$\nu = \bar{\nu} \circ X$.
In order to complete the proof, we need to show that a version of $\nu$
with $\nu \in \sS(\osP_l)$, can be constructed. Let $A$ denote the set of
all $\omega\in\Omega$ such that $Z(\omega) - Z_0(\omega) = \int_0^{\cdot}
c(Y_u(\omega))\, d\alpha_u(\omega)$. For any $x\in E$ and any
$\mu \in \bar{\sP}(x)$ we have
$\mu( A ) = 1$. Therefore, by the concatenability property established
above, we have
 \begin{align*}
    1 &= \int \ind{A}(\omega) \, \mu(d\omega) =
   \int \int  \ind{A}(\omega\ast_{\kappa} \omega')
   \bar{\nu}_{X_{\kappa}(\omega)}(d\omega')
   \mu(d\omega) \leq
   \int \int  \ind{A}(\omega') \bar{\nu}_x(d\omega') \mu_{X_{\kappa}}(dx),
 \end{align*}
and, so, there exists a $\sN_1 \in \Borel(E)$ with  $\mu_{X_{\kappa}}(\sN_1)=0$
and
such that for $x\in E\setminus \sN_1$ we have $\bar{\nu}_x( A )=1$.
Similarly, $\bar{\nu}_x(X_0=x)=1$ for all $x\in E\setminus \sN_2$, where
$\sN_2$ is a $\mu_{X_{\kappa}}$-null set in $\Borel(E)$.
It remains to redefine $\bar{\nu}$ on $\sN_1\cup\sN_2$ so that $\bar{\nu} \in
\sS(\osP_c)\cap \sS(\osP_l)$. This is easily achieved by picking an arbitrary
selector $\bar{\nu}' \in \sS(\bar{\osP}_c\cap \bar{\osP}_l)$ and setting
setting $\bar{\nu}_x = \bar{\nu}'_x$ for all $x\in \sN_1\cup\sN_2$.
\end{proof}


\begin{thebibliography}{DVJ03}
\expandafter\ifx\csname urlstyle\endcsname\relax
  \providecommand{\doi}[1]{doi:\discretionary{}{}{}#1}\else
  \providecommand{\doi}{doi:\discretionary{}{}{}\begingroup
  \urlstyle{rm}\Url}\fi

\bibitem[BC67]{BatChe67}
\textsc{Bather, J. and Chernoff, H.}
\newblock Sequential decisions in the control of a spaceship.
\newblock In \emph{Proc. {F}ifth {B}erkeley {S}ympos. {M}athematical
  {S}tatistics and {P}robability ({B}erkeley, {C}alif., 1965/66), {V}ol. {III}:
  {P}hysical {S}ciences}, pages 181--207. Univ. California Press, Berkeley,
  Calif., 1967.

\bibitem[BN12]{BouNut12}
\textsc{Bouchard, B. and Nutz, M.}
\newblock Weak dynamic programming for generalized state constraints.
\newblock \emph{SIAM J. Control Optim.}, 50(6):3344--3373, 2012.

\bibitem[Bor89]{Bor89}
\textsc{Borkar, V.~S.}
\newblock \emph{Optimal control of diffusion processes}, volume 203 of
  \emph{Pitman Research Notes in Mathematics Series}.
\newblock Longman Scientific \& Technical, Harlow, 1989.

\bibitem[BS78]{BerShr78}
\textsc{Bertsekas, D.~P. and Shreve, S.~E.}
\newblock \emph{Stochastic optimal control}, volume 139 of \emph{Mathematics in
  Science and Engineering}.
\newblock Academic Press Inc. [Harcourt Brace Jovanovich Publishers], New York,
  1978.
\newblock The discrete time case.

\bibitem[BS12]{BaySir12}
\textsc{Bayraktar, E. and S{\^\i}rbu, M.}
\newblock Stochastic {P}erron's method and verification without smoothness
  using viscosity comparison: the linear case.
\newblock \emph{Proc. Amer. Math. Soc.}, 140(10):3645--3654, 2012.

\bibitem[BS13]{BaySir13}
\textsc{Bayraktar, E. and S{\^\i}rbu, M.}
\newblock Stochastic {P}erron's method for {H}amilton-{J}acobi-{B}ellman
  equations.
\newblock \emph{SIAM J. Control Optim.}, 51(6):4274--4294, 2013.

\bibitem[BT11]{BouTou11}
\textsc{Bouchard, B. and Touzi, N.}
\newblock Weak dynamic programming principle for viscosity solutions.
\newblock \emph{SIAM J. Control Optim.}, 49(3):948--962, 2011.

\bibitem[BV10]{BouVu10}
\textsc{Bouchard, B. and Vu, T.~N.}
\newblock The obstacle version of the geometric dynamic programming principle:
  application to the pricing of {A}merican options under constraints.
\newblock \emph{Appl. Math. Optim.}, 61(2):235--265, 2010.

\bibitem[DVJ03]{DalVer03}
\textsc{Daley, D.~J. and Vere-Jones, D.}
\newblock \emph{An introduction to the theory of point processes. {V}ol. {I}}.
\newblock Probability and its Applications (New York). Springer-Verlag, New
  York, second edition, 2003.
\newblock Elementary theory and methods.

\bibitem[EK81]{Elk81}
\textsc{El~Karoui, N.}
\newblock Les aspects probabilistes du contr\^ole stochastique.
\newblock In \emph{Ninth {S}aint {F}lour {P}robability {S}ummer {S}chool---1979
  ({S}aint {F}lour, 1979)}, volume 876 of \emph{Lecture Notes in Math.}, pages
  73--238. Springer, Berlin, 1981.

\bibitem[ET13a]{ElKTan13}
\textsc{{El Karoui}, N. and {Tan}, X.}
\newblock {Capacities, Measurable Selection and Dynamic Programming Part I:
  Abstract Framework}, 2013.
\newblock arXiv:1310.3363.

\bibitem[ET13b]{ElKTan13a}
\textsc{{El Karoui}, N. and {Tan}, X.}
\newblock {Capacities, Measurable Selection and Dynamic Programming Part II:
  Application in Stochastic Control Problems}, 2013.
\newblock arXiv:1310.3364.

\bibitem[FS93]{FleSon93}
\textsc{Fleming, W.~H. and Soner, H.~M.}
\newblock \emph{Controlled {M}arkov processes and viscosity solutions},
  volume~25 of \emph{Applications of Mathematics (New York)}.
\newblock Springer-Verlag, New York, 1993.
\newblock ISBN 0-387-97927-1.

\bibitem[KS84]{KarShr84}
\textsc{Karatzas, I. and Shreve, S.~E.}
\newblock Connections between optimal stopping and singular stochastic control.
  {I}. {M}onotone follower problems.
\newblock \emph{SIAM J. Control Optim.}, 22(6):856--877, 1984.

\bibitem[KS91]{KarShr91}
\textsc{Karatzas, I. and Shreve, S.~E.}
\newblock \emph{Brownian motion and stochastic calculus}, volume 113 of
  \emph{Graduate Texts in Mathematics}.
\newblock Springer-Verlag, New York, second edition, 1991.

\bibitem[NvH13]{NutHan13}
\textsc{Nutz, M.~and van Handel, R.}
\newblock Constructing sublinear expectations on path space.
\newblock \emph{Stochastic Processes and Their Applications}, 123
(8):3100-3121, 2013.

\bibitem[Sri98]{Sri98}
\textsc{Srivastava, S.~M.}
\newblock \emph{A course on Borel sets}, volume 180.
\newblock Springer, New York, 1998.

\bibitem[ST02a]{SonTou02}
\textsc{Soner, H.~M. and Touzi, N.}
\newblock Dynamic programming for stochastic target problems and geometric
  flows.
\newblock \emph{J. Eur. Math. Soc. (JEMS)}, 4(3):201--236, 2002.

\bibitem[ST02b]{SonTou02a}
\textsc{Soner, H.~M. and Touzi, N.}
\newblock Stochastic target problems, dynamic programming, and viscosity
  solutions.
\newblock \emph{SIAM J. Control Optim.}, 41(2):404--424, 2002.

\bibitem[Str77]{Str77}
\textsc{Stricker, C.}
\newblock {Quasimartingales, martingales locales, semimartingales et filtration
  naturelle}.
\newblock \emph{Probability Theory and Related Fields}, 39:55--63, 1977.

\bibitem[{\v{Z}}it14]{Zit14}
\textsc{{\v{Z}}itkovi\'{c}, G.}
\newblock Dynamic programming for controlled markov families: abstractly and
  over martingale measures.
\newblock \emph{SIAM Journal of Control and Optimization}, 52(3):1597--1621,
  2014.

\end{thebibliography}
\end{document}